\newtheorem{theorem}{\bf Theorem}
\newtheorem{lemma}{\bf Lemma}
\newtheorem{proposition}{\bf Proposition}
\newtheorem{corollary}{\bf Corollary}
\newtheorem{assumption}{\bf Assumption}
\theoremstyle{remark}
\newtheorem{remark}{\bf Remark}
\newcommand{\paren}[1]{\ensuremath{\left( #1\right)}}
\newcommand{\clint}[1]{\ensuremath{\left[ #1\right]}}
\newcommand{\set}[1]{\ensuremath{\left\{ #1\right\}}}
\newcommand{\matr}[1]{\ensuremath{\clint{\begin{array} #1 \end{array}}}}
\newcommand{\norm}[1]{\ensuremath{\left\| #1\right\|}}
\newcommand{\snorm}[1]{\ensuremath{\| #1\|}}
\newcommand{\R}{\ensuremath{\mathbb{R}}}
\newcommand{\F}{\ensuremath{\mathcal{F}}}
\newcommand{\E}{\ensuremath{\mathbb{E}}}
\newcommand{\C}{\ensuremath{\mathcal{C}}}
\newcommand{\T}{\ensuremath{\mathcal{T}}}
\newcommand{\LL}{\ensuremath{\mathcal{L}}}
\DeclareFontFamily{OT1}{pzc}{}
\DeclareFontShape{OT1}{pzc}{m}{it}{<-> s * [1.200] pzcmi7t}{}
\DeclareMathAlphabet{\mathpzc}{OT1}{pzc}{m}{it}
\newcommand{\LAG}{\ensuremath{{\mathpzc{L}_{\hspace{.1pt}}}}}
\newcommand{\Neg}[1]{\hspace{- #1 bp}}
\DeclareMathOperator{\Tr}{\mathrm{tr}}
\title{Linear Quadratic Control with Risk Constraints	\thanks{This work was supported by the AFOSR under grant FA9550-19-1-0265 (Assured Autonomy in Contested Environments) and by the ARL under grant DCIST CRA W911NF-17-2-0181.}}
\author{Anastasios~Tsiamis	\thanks{Department   of   Electrical   and   Systems  Engineering,  University  of  Pennsylvania,  Philadelphia,  PA  19104 (email: \{atsiamis, aribeiro, pappasg\}@seas.upenn.edu). },~Dionysios~S.~Kalogerias	\thanks{Department of Electrical Engineering, Yale University, New Haven, CT 06511 (email: dionysis.kalogerias@yale.edu).}, Alejandro~Ribeiro\footnotemark[2], and George~J.~Pappas\footnotemark[2] %
	}
\begin{document}
\maketitle
\begin{abstract}
We propose a new risk-constrained formulation of the classical Linear Quadratic (LQ) stochastic control problem for general partially-observed systems. Our framework is motivated by the fact that the risk-neutral LQ controllers, although optimal in expectation, might be ineffective under relatively infrequent, yet statistically significant extreme events.
 To effectively trade between average and extreme event performance, we introduce a new risk constraint, which explicitly restricts the total expected predictive variance of the state penalty by a user-prescribed level.
We show that, under certain conditions on the process noise, the optimal risk-aware controller can be evaluated explicitly and in closed form. In fact, it is affine relative to the minimum mean square error (mmse) state estimate. The affine term pushes the state away from directions where the noise exhibits heavy tails, by exploiting the third-order moment~(skewness) of the noise. The linear term regulates the state more strictly in riskier directions, where both the prediction error (conditional) covariance and the state penalty are simultaneously large; this is achieved by inflating the state penalty within a new filtered Riccati difference equation. We also prove that the new risk-aware controller is internally stable, regardless of parameter tuning, in the special cases of i) fully-observed systems, and ii) partially-observed systems with Gaussian noise.
The properties of the proposed risk-aware LQ framework are lastly illustrated via indicative numerical examples. 
\end{abstract}

\section{Introduction}\label{Section_Introduction}
In the problem of Linear Quadratic (LQ) stochastic control, one is typically interested in optimizing average control performance for linear systems of the form
\begin{equation}\label{FOR_EQN_system}
\begin{aligned}
x_{t+1}&=Ax_{t}+Bu_{t}+w_{t+1}\\
y_t&=Cx_t+v_t,
\end{aligned}
\end{equation}
where $x_t\in\R^{n}$ is the state, $y_t\in\R^{m}$ is the measured output, $u_t$ is input, and $w_t$, $v_t$ are process and measurement noise disturbances. A standard approach is to minimize the expectation of the following quadratic cost comprising of stage-wise input and state penalties up to a horizon $N$
 \begin{equation}\label{eq:Risk_Neutral_LQ}
\begin{aligned}
\min_{u}&\quad \E\set{ x'_NQ x_N+\sum_{t=0}^{N-1} x'_tQx_t+u'_tRu_t}\\
\mathrm{s.t.}&\quad\text{Dynamics~\eqref{FOR_EQN_system}}\\
\end{aligned},
\end{equation}
where matrices $Q,\,R$ are design choices.

While LQ control has been a standard approach to controlling stochastic systems, it only focuses on \emph{average} performance, which might be an insufficient objective when dealing with critical applications. Examples of such applications
appear naturally in many areas, including wireless industrial control \cite{Ahlen2019}, energy \cite{Bruno2016,Moazeni2015}, finance \cite{Markowitz1952,Follmer2002,Shang2018}, robotics \cite{Kim2019,Pereira2013}, networking \cite{Ma2018}, and safety \cite{samuelson2018safety,chapman2019cvar}, to name a few. Indeed, occurrence of less probable, non-typical or unexpected events might lead the underlying dynamical system to experience shocks with possibly catastrophic consequences, e.g., a drone diverging too much from a given trajectory in a hostile environment, or an autonomous vehicle crashing onto a wall or hitting a pedestrian. In such situations, design of effective \emph{risk-aware} control policies is highly desirable, systematically compensating for those extreme events, at the cost of slightly sacrificing average performance under nominal conditions.

\begin{figure}[t]
	\centering
	\includegraphics[width=0.8\columnwidth]{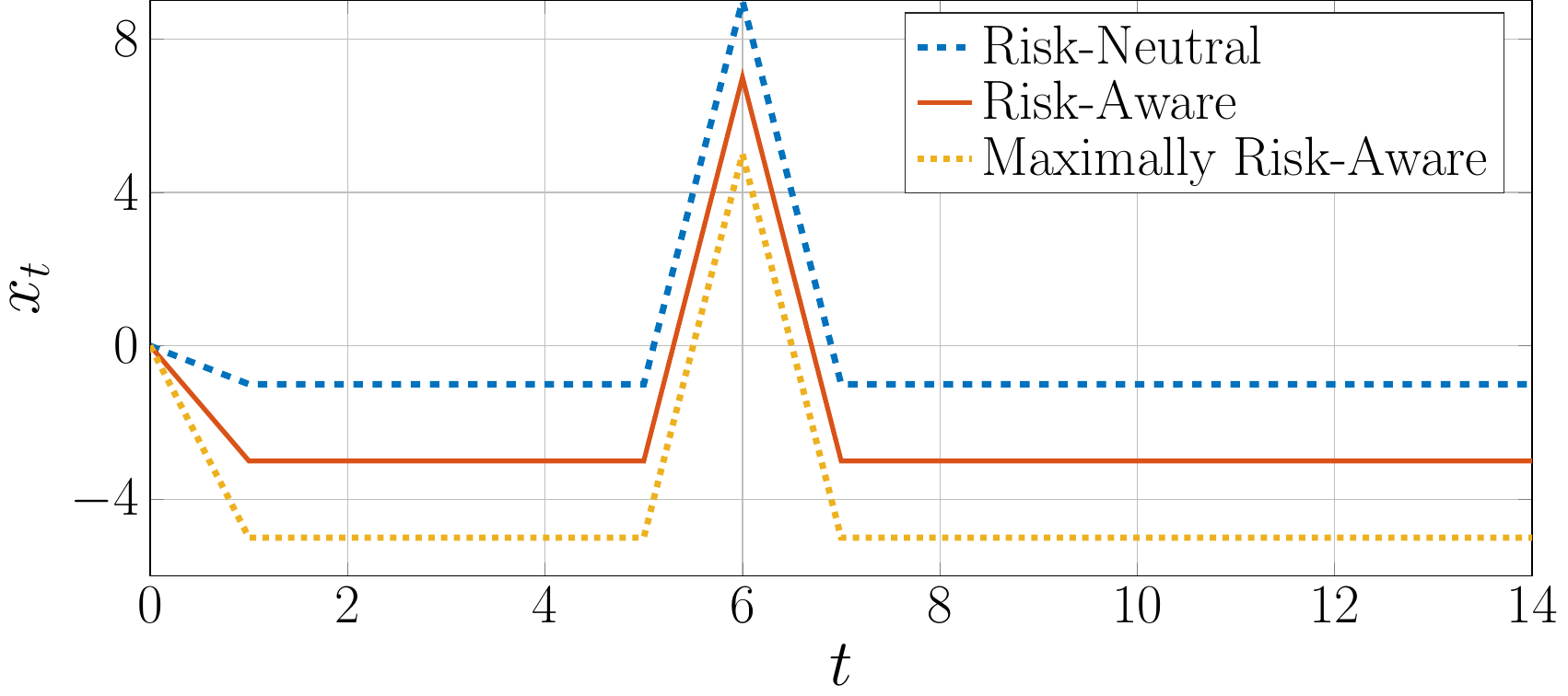}
	\vspace{-4bp}
	\caption{Comparison between risk-neutral and risk-aware control performance, when the system experiences rare but large shocks---here the shock occurs at time $6$. By sacrificing average behavior, the risk-aware controllers push the state away from the direction of the shock. 
}
	\label{fig:toy_example}
 	\vspace{-8bp}
\end{figure}

To highlight the usefulness of a risk-aware control policy, let us consider the following simple, motivating example. Let $x_{k+1}=x_{k}+u_k+w_{k+1}$ model an aerial robot, moving along a line. Assume that the process noise $w_k$ is \textit{i.i.d.} Bernoulli, taking the values $\beta>2$ with probability $1/\beta$ and $0$ with probability $1-1/\beta$. This noise represents shocks, e.g., wind gusts, that can occur with some small probability. We would then like to minimize the LQR cost $\E \sum_{t=0}^{N} \{x^2_t\}$, i.e., the total displacement of the robot over a horizon of $N$ time steps. In this case, the LQR optimal controller is $u^{\mathrm{LQR}}_k=-x_k-1$, where $-1\equiv-\mathbb{E}w_k$ cancels the mean of the process noise. We see that the LQR solution is \textit{risk-neutral}, as it does not account for the fact that the shock $\beta$ could be arbitrarily large.  On the other hand, the risk-aware LQR formulation proposed in this work results in a \textit{family} of optimal controllers of the form
\[
u^{*}_{t}(\mu)=-x_t-1-\frac{\mu}{1+2\mu}(\beta-2),\quad\mu\ge0,
\]
where $\mu$ controls the trade-off between average performance and risk. As $\mu$ increases, we move from the risk-neutral to the \textit{maximally risk-aware} controller $u^{*}_{t}(\infty)=-x_t-\beta/2$, which treats the noise as adversarial---see Fig.~\ref{fig:toy_example}.

In both classical and recent literature in linear-quadratic problems, risk awareness in estimation and control is typically achieved by replacing the respective random cost with its exponentiation~\cite{jacobson1973optimal,Whittle1981,bacsar2000risk,pham2012linear,roulet2019convergence,Speyer1992,Dey1999,Moore1997,Dey1997,Bauerle2014more}. Yet, the resulting stochastic control problem might not be well-defined for general classes of noise distributions, as it requires the moment generating function of the cost to be finite. Thus, heavy-tailed or skewed distributions, which are precisely those exhibiting high risk, are naturally excluded. Also, even if the expectation of the exponential cost is finite, it does not lead to a general, closed-form and interpretable solution. 
A notable exception is that of
Gaussian noise, also known as the Linear Exponential Quadratic Gaussian (LEQG) problem, which does enjoy a simple closed-form solution~\cite{Whittle1981,Speyer2008STOCHASTIC}. 
Apparently though, the Gaussian assumption is unable to capture distributions with asymmetric (skewed) structure, as in the above example.

Our contributions are as follows:

\noindent\textbf{--New Risk-Constrained Formulation.} We introduce a new risk-constrained formulation for the problem of LQ control in the case of partially-observed systems. The standard LQ objective is minimized subject to a total expected predictive variance risk constraint with respect to the state penalties. 
By tuning the risk constraint, we can trade between average performance and statistical variability of the state penalties.

\noindent\textbf{--General Noise Models.}
Contrary to the LEQG approach, our risk-constrained formulation is well-defined for general noise distributions, provided the associated fourth-order moments of the process noise are finite; thus, heavy-tailed or skewed noises are supported within our framework. For fully-observed systems, the optimal control law can be explicitly characterized under the same condition of finite fourth-order moments. In the case of general partially-observed systems, in order to characterize the optimal controller, we require the additional sufficient condition that all higher-order moments of the process noise exist. In any case, we do not require the existence of a moment generating function.

\noindent\textbf{--Characterization of Optimal Risk-Aware Controls.}
Under the aforementioned regularity conditions on the process noise, the constrained LQ problem admits a closed-form solution with a natural interpretation. The optimal risk-aware feedback controller is affine with respect to the optimal observer. 
The affine component
pushes the state away from directions where the state prediction error exhibits (skewed) heavy tails. Meanwhile, the state feedback gain satisfies a new risk-aware filtered Riccati recursion, in which the state penalty is inflated in riskier directions, where both the (conditional) covariance of the state prediction error and the state penalty are simultaneously larger. Interestingly, the separation principle holds, in the sense that the optimal observer is the minimum mean-square error estimator, which is designed independently of the control objective. To explicitly compute the parameters of the affine optimal control law, it is required to track several conditional moments, which might be a hard problem in general.

\noindent\textbf{--Explicit Risk-Aware LQR and LQG controllers.}
 In the special case of fully-observed systems (Linear Quadratic Regulator (LQR)) we can explicitly compute the optimal control law. The same is true for the case of partially-observed systems with Gaussian noise (Linear Quadratic Gaussian (LQG) control). Further, we show that our optimal risk-aware controllers are always stable, under standard controllability/observability conditions.
Interestingly, \textit{by appropriate re-parameterization}, our risk-aware LQR problem is equivalent to a generalized risk-neutral LQR problem with a tracking objective. Essentially, this implies that risk-neutral LQR formulations can provide inherent risk-averse behavior, as long as the involved parameters are selected in a principled way, as presented herein. A similar property holds for the risk-aware LQG problem.

\subsection{Related Work}
\textbf{\textit{Risk-aware optimization. }}
Risk-aware optimization has been studied in a wide variety of decision making contexts \cite{A.2018,Cardoso2019,W.Huang2017,Jiang2017,Kalogerias2018b,Tamar2017,Vitt2018,Zhou2018,Ruszczynski2010,SOPASAKIS2019281,chapman2019cvar,kalogerias2020noisy}. The basic idea is to replace expectations by more general functionals, called risk measures\cite{ShapiroLectures_2ND}, purposed to effectively quantify the statistical volatility of the involved random cost function, in addition to mean performance. Typical examples are mean-variance functionals \cite{Markowitz1952,ShapiroLectures_2ND},
mean-semideviations \cite{Kalogerias2018b}, and Conditional Value-at-Risk
(CVaR) \cite{Rockafellar1997}.

 \textbf{\textit{CVaR-optimal control.}}
In the case of control systems, CVaR optimization techniques have also been considered for risk-aware constraint satisfaction~\cite{chapman2019cvar}. Although CVaR captures variability and tail events well, CVaR optimization problems rarely enjoy closed-form expressions. Approximations are usually required to make computations tractable, e.g., process noise and controls are assumed to be finite-valued~\cite{chapman2019cvar}. Recently, in~\cite{chapman2022cvarLQ} a tractable upper bound was derived for the CVaR-LQR problem based on the assumption that the noise is finite-valued.

\textbf{\textit{Robust control.} }
Another related concept is that of robust control, where the system model or the noise profile is unknown~\cite{zhou1996robust,tzortzis2016robust,dean2020sample}. The objective is to optimally control the true system under worst case model uncertainty.
On the contrary, in risk-aware control, extreme noise events are part of the system model; they are not the outcome of model mismatch. Even if the system is exactly modeled, we would still need to consider risk-aware control if the process noise is heavy-tailed or highly variable. From this point of view, robustness and risk are complementary concepts.

\textbf{\textit{Mixed $\mathcal{H}_{2}$/$\mathcal{H}_{\infty}$ control. Regret-optimal control}}
Interestingly, there is a connection between mixed  $\mathcal{H}_{2}$/$\mathcal{H}_{\infty}$ control and risk-aware LEQG control~\cite{glover1988state,zhang2021derivative}. By increasing the exponential parameter in the LEQG control law, we trade average performance ($\mathcal{H}_{2}$) for closed-loop responses with smaller $\mathcal{H}_{\infty}$ norm. Another way to trade between robustness and performance was introduced in~\cite{goel2021regret}, where the worst-case regret with respect to non-causal $\mathcal{H}_2$ policies is minimized. 

 \textbf{\textit{Predictive variance.}}
Recently, in our previous work~\cite{tsiamis2020risk}, we introduced predictive variance as a new risk measure for LQR control and used it in a risk-constrained optimal control formulation. The results of~\cite{tsiamis2020risk} were extended to the infinite horizon case in~\cite{zhao2021infinite}. The performance of the policy gradient algorithm in the case of risk-constrained Linear Quadratic Regulators was also studied in~\cite{zhao2021global}.
Predictive variance constraints have also been used as a measure of risk in portfolio optimization~\cite{abeille2016lqg}; different from our paper, the noise is limited to Gaussian distributions and the variance is with respect to linear stage costs. Note that our previous work~\cite{tsiamis2020risk} contains only preliminary results for fully-observed systems. Here, we study the more general and challenging problem of LQ control in the case of partially observed systems. In fact, the optimal feedback law in the case of partially-observed systems can be quite different from the feedback law in the fully-observed case, even in the case of Gaussian noise--see Section~\ref{Section_Gaussian} for more details.

\textit{\textbf{Notation and Structure:}}
The transpose operation is denoted by $(\cdot)'$. If $x_{k},\dots,x_{t}$ is a sequence of vectors, then $x_{k:t}$ denotes the batch vector of all $x_i$ for $k\le i\le t$. We use the notation $\snorm{\cdot}_2$ to denote both the square norm of vectors and the spectral norm of matrices. The $\sigma$-algebra generated by a random vector $x$ is denoted by $\sigma(x)$. By $\LL_p(\F)$, we denote the space of $\F$-measurable random variables (vectors) with finite $p$-order moments. 
The remaining paper is structured as follows. In Section~\ref{Section_Formulation}, we introduce our risk-aware formulation of LQ control. In Sections~\ref{sec:reformulation},~\ref{sec:duality} we show that our the risk-aware LQ problem can be reformulated as a Quadratically Constrained Quadratic Problem and solved by exploiting Lagrangian duality. In Sections~\ref{Section_Fully_Observed},~\ref{Section_Gaussian}, we provide explicit control laws for the problem of risk-aware LQR and risk-aware LQG control respectively. In Section~\ref{Section_Partially}, we characterize the optimal control laws in the case of general partially observed systems. We conclude with numerical simulations in Section~\ref{Section_Simulations} and with remarks in Section~\ref{sec:conclusion}.
\section{Risk-Constrained LQ Formulation} \label{Section_Formulation}

Consider system~\eqref{FOR_EQN_system},
where $x_t\in \R^n$ is the state, $u_t\in \R^p$ is the control signal, and $y_t\in\R^m$ is the measured output. Matrix $A\in \R^{n\times n}$ is the state transition matrix, $B\in \R^{n\times p}$ is the input matrix, and $C\in \R^{m\times n}$ is the output matrix. We assume that the initial value $x_0$ is deterministic and fixed. Signal $w_{t}\in \R^n$ is a random process noise, while $v_t\in \R^m$ is a random measurement noise. The process $(w_t,v_t)$ is assumed to be \textit{i.i.d} across time, but it can have any joint distribution (possibly non-Gaussian). 
For $t\ge0$, let $\F_t=\sigma\paren{y_{0:t},u_{0:t}}$ be the $\sigma$-algebra generated by all observables up to time $t$, and let $\F_{-1}$ be the trivial $\sigma$-algebra. Based on this notation, $u_t$ is $\F_t$-measurable, while $(w_{t+1},v_{t+1})$ is independent of $\F_t$.
We also make an additional assumption on the process noise.
\begin{assumption}[\textbf{Noise Regularity}]\label{FOR_ASS_noise}
The process $w_t$ has finite fourth-order moment, i.e., for every $t\in \mathbb{N}$, $\E\norm{w_{t}}^4_2<\infty$.
\end{assumption}
\noindent The above mild regularity condition is required for our risk measure to be well-defined. It is satisfied by general noise distributions, including many heavy-tailed ones.
Denote the mean of the noise by $\bar{w}\triangleq\E w_k$ and its variance by $W\triangleq \E(w_k-\bar{w})(w_k-\bar{w})'$.

As discussed in Section~\ref{Section_Introduction}, the classical LQ problem is risk-neutral, since it optimizes performance only on average~\cite{bertsekas2017dynamic}. 
Still, even if average performance is good, the state can grow arbitrarily large under less probable, yet extreme events.
In other words, the state can exhibit large variability.
To deal with this issue, we propose a risk-constrained formulation of the LQ control problem, posed as
\begin{equation}\label{eq:LQ_constrained}
\begin{aligned}
\min_{u} &\quad \E\set{ x'_NQ x_N+\sum_{t=0}^{N-1} x'_tQx_t+u'_tRu_t}\\
\mathrm{s.t.} &\quad \E \set{\sum_{t=1}^{N} \left[ x'_tQ x_t-\E\paren{x'_tQx_t|\F_{t-1}} \right]^2}\le \epsilon\\
&\quad \text{Dynamics~\eqref{FOR_EQN_system}}\\ 
&\quad u_t\in \LL_{4}(\F_t),\,t=0,\dots, N-1
\end{aligned}\,,
\end{equation}
where $u=u_{0:N-1}$ are the inputs from time $0$ up to time $N-1$, for some horizon $N\in \mathbb{N}$. For each $t$, the \textit{causality constraint} on $u_t$ restricts the inputs to the space of $\F_t$-measurable random vectors of appropriate dimension with bounded fourth-order moments,
denoted as $\LL_4(\F_t)$. 
Here, the risk measure adopted is the \emph{(cumulative expected) predictive variance} of the state cost. The predictive variance incorporates information about the tail \textit{and} skewness of the penalty $x_t'Qx_t$. This forces the controller to take higher-order noise statistics into account, mitigating the effect of rare though large noise values.
Hence, our risk-aware LQ formulation not only forces the state $x_t$ to be close to zero, but also explicitly restricts its variability. The initial state is fixed (for simplicity), so there is no associated risk term for $t=0$. The fourth-order integrability constraint on the inputs along with Assumption~\ref{FOR_ASS_noise} are sufficient to guarantee that the cumulative expected predictive variance is well-defined.

\begin{remark}[\textbf{Input integrability}]
The fourth-order integrability condition $u_t\in\LL_4(\F_t)$ on the inputs is stricter compared with the risk-neutral formulation, where only square-integrability is needed. In the general case of partially-observed systems, this condition is needed to guarantee that the constraint in~\eqref{eq:LQ_constrained} is well-defined. However, in many cases of interest, this condition is not essential. For example, in the fully observed case (Section~\ref{Section_Fully_Observed}), we can pose problem~\eqref{eq:LQ_constrained} with the constraint $u_t\in \LL_{2}(\F_t)$ and the optimal control is still guaranteed to be in $\LL_{4}(\F_t)$; this is a byproduct of the noise regularity Assumption~\ref{FOR_ASS_noise}. The same holds for the case of partially-observed systems with Gaussian noise (Section~\ref{Section_Gaussian}). \hfill $\diamond$
\end{remark}

Problem \eqref{eq:LQ_constrained} offers a simple and interpretable way to control the trade-off between average performance and risk. By simply decreasing $\epsilon$, we increase risk-awareness. Inspired by standard risk-aware formulations, in the above optimization problem our risk definition is tied to the specific state penalty $x'_tQx_t$. However, all of our results are still valid if we employ the predictive variance of a different quadratic form, e.g., the norm of the state, $\norm{x_t}_2^2$, in the constraint. 
 In the following sections, we characterize the optimal controllers in the case of general partially-observed systems. We also provide explicit, finite-dimensional control laws for the case of i) fully-observed systems with general noise, which we term risk-aware LQR controllers; and ii) partially-observed systems with Gaussian noise, which we term risk-aware LQG controllers.

\section{Quadratic Reformulation of Risk-Constrained LQ Control}\label{sec:reformulation}
The solution procedure of the risk-aware dynamic program~\eqref{eq:LQ_constrained} consists of the following steps. First, we ensure the well-definiteness of~\eqref{eq:LQ_constrained}, also showing that~\eqref{eq:LQ_constrained} can be equivalently expressed as a \textit{sequential variational Quadratically Contrained Quadratic Program (QCQP)}, or, more precisely, as a \textit{Quadratically Constrained LQ (QC-LQ)} problem (Proposition~\ref{prop:partially_existence}). Then, we exploit Lagrangian duality (Theorem~\ref{thm:KKT}) to solve~\eqref{eq:LQ_constrained} \textit{exactly and in closed form}. More specifically, we first derive an explicit expression for the optimal risk-aware controller~(Theorems~\ref{thm:optimal_LQR},~\ref{thm:partially_observed},~\ref{thm:gaussian}), given an arbitrary but fixed Lagrange multiplier. Then, we show how an optimal Lagrange multiplier may be efficiently discovered via trivial bisection~(Theorem~\ref{thm:OPTIMAL}).

Since we are dealing with partially observed systems, we can only approximately estimate the current state $x_t$ based on the information $\F_t$ collected so far. Define the state estimate and the state prediction at time $t$ respectively as
\begin{align*}
   \hat{x}_{t|t}&=\E(x_t|\F_t) \\
   \hat{x}_{t}&=\E(x_t|\F_{t-1}).
\end{align*} 
Note that both values are mean-square optimal, i.e. they minimize the mean square estimation error (prediction error respectively)~\cite{anderson2005optimal}.  Under Assumption~\ref{FOR_ASS_noise} on $w_t$, and since $u_t\in\LL_4(\F_{t})$ both expectations are well-defined. The state prediction and the state estimate are related via the expression
\[
\hat{x}_{t}=A\hat{x}_{t-1|t-1}+Bu_{t-1}+\bar{w}.
\]
The innovation (or prediction) error is defined as
\begin{equation}\label{eq:prediction_error}
\delta_t\triangleq x_t-\hat{x}_{t}.
\end{equation}
Define also the refinement error between the prediction and the estimate:
\begin{equation}\label{eq:estimation_error}
	e_t\triangleq \hat{x}_{t|t}-\hat{x}_{t}.
\end{equation}
Both errors are martingale differences, satisfying the mean conditions $\E (e_t|\F_{t-1})=0,\E (\delta_t|\F_{t-1})=0$.
Note that in the general case of non-Gaussian noise, the innovation error $\delta_t$ is not i.i.d. and not independent of the past in general. 

In the following result, we show that the predictive variance constraint has an underlying quadratic structure.
\begin{proposition}[\textbf{Quadratic Reformulation}]\label{prop:partially_existence}
Let Assumption~\ref{FOR_ASS_noise} be in effect and define the (random) conditional moments
\begin{align*}
     W_{t-1}&\triangleq \E(\delta_{t}\delta'_{t}|\F_{t-1})\\
     m_{3,t-1}&\triangleq 2Q\E\set{\delta_{t}\delta_t'Q\delta_t|\F_{t-1}}\\
     m_{4,t-1}&\triangleq \E\set{	(\delta_t'Q\delta_t -\Tr(QW_{t-1}))^2|\F_{t-1}}.
\end{align*}
Then, the risk-constrained LQ problem~\eqref{eq:LQ_constrained} is well-defined and equivalent to the sequential variational QCQP
\begin{align}
\min_{u}&\hspace{-6pt}& J(u) \triangleq\,\,& \E\set{ x'_NQ x_N+\sum_{t=0}^{N-1} x'_tQx_t+u'_tRu_t} \label{eq:LQ_constrained_reformulated}\\
\mathrm{s.t.} &\hspace{-6pt}& J_R(u) \triangleq\,\,& \E\set{\sum_{t=1}^{N} 4\hat{x}'_{t}QW_{t-1}Q\hat{x}_{t}+2\hat{x}'_{t}m_{3,t-1}}\le \bar{\epsilon}\nonumber\\
&\hspace{-6pt}& & \text{\textnormal{Dynamics}~\eqref{FOR_EQN_system}}\nonumber\\ 
&\hspace{-6pt}& & u_t\in \LL_{4}(\F_t),\,t=0,\dots, N-1,\nonumber
\end{align}
where
$
 \bar{\epsilon}\triangleq \epsilon-\sum_{t=1}^{N} \E m_{4,t-1}.
$
\end{proposition}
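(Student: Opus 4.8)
The plan is to expand the predictive variance term $\bigl[x_t'Qx_t-\E(x_t'Qx_t\mid\F_{t-1})\bigr]^2$ by decomposing $x_t=\hat x_t+\delta_t$, and then to take the conditional expectation given $\F_{t-1}$ term by term. First I would write $x_t'Qx_t = \hat x_t'Q\hat x_t + 2\hat x_t'Q\delta_t + \delta_t'Q\delta_t$; since $\hat x_t$ is $\F_{t-1}$-measurable and $\E(\delta_t\mid\F_{t-1})=0$, we get $\E(x_t'Qx_t\mid\F_{t-1}) = \hat x_t'Q\hat x_t + \Tr(QW_{t-1})$. Subtracting, the centered quantity becomes $2\hat x_t'Q\delta_t + \bigl(\delta_t'Q\delta_t-\Tr(QW_{t-1})\bigr)$, i.e. a linear-in-$\delta_t$ term plus a mean-zero quadratic-in-$\delta_t$ term. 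Squaring produces three groups: $4\hat x_t'Q\delta_t\delta_t'Q\hat x_t$, the cross term $4\hat x_t'Q\delta_t\bigl(\delta_t'Q\delta_t-\Tr(QW_{t-1})\bigr)$, and $\bigl(\delta_t'Q\delta_t-\Tr(QW_{t-1})\bigr)^2$.

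Next I would take $\E(\cdot\mid\F_{t-1})$ of each group, pulling the $\F_{t-1}$-measurable factors $\hat x_t$ outside. The first group yields $4\hat x_t'QW_{t-1}Q\hat x_t$ by definition of $W_{t-1}$. The cross term yields $4\hat x_t'Q\,\E\bigl(\delta_t(\delta_t'Q\delta_t)\mid\F_{t-1}\bigr)$ once one observes that $\E\bigl(\delta_t\Tr(QW_{t-1})\mid\F_{t-1}\bigr)=\Tr(QW_{t-1})\E(\delta_t\mid\F_{t-1})=0$; recognizing $2Q\,\E(\delta_t\delta_t'Q\delta_t\mid\F_{t-1})=m_{3,t-1}$, this contributes exactly $2\hat x_t'm_{3,t-1}$. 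The last group is $m_{4,t-1}$ by definition, which is $\F_{t-1}$-measurable and hence equals its own conditional expectation. Summing over $t=1,\dots,N$ and then applying the tower property to take the outer expectation gives $\E\set{\sum_{t=1}^N 4\hat x_t'QW_{t-1}Q\hat x_t + 2\hat x_t'm_{3,t-1}} + \sum_{t=1}^N\E m_{4,t-1}$; moving the deterministic $\sum\E m_{4,t-1}$ to the right-hand side of the inequality and defining $\bar\epsilon$ accordingly recovers~\eqref{eq:LQ_constrained_reformulated}. The objective $J(u)$ is unchanged.

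The part requiring genuine care is the well-definedness claim that licenses all of the above manipulations—specifically, that every conditional moment appearing ($W_{t-1}$, $m_{3,t-1}$, $m_{4,t-1}$, and the conditional expectations themselves) is finite almost surely, and that the cumulative expected predictive variance is a finite quantity so that the constraint is meaningful. Here I would argue by induction on $t$: under Assumption~\ref{FOR_ASS_noise} and the causality constraint $u_t\in\LL_4(\F_t)$, the state $x_t$ lies in $\LL_4$ (since $x_{t+1}=Ax_t+Bu_t+w_{t+1}$ and $\LL_4$ is closed under linear combinations of jointly-defined variables, using $\norm{\cdot}_4$ triangle inequality and the fact that $w_{t+1}$ has finite fourth moment), hence $\hat x_t=\E(x_t\mid\F_{t-1})\in\LL_4$ by conditional Jensen, hence $\delta_t=x_t-\hat x_t\in\LL_4$, hence $x_t'Qx_t\in\LL_2$ and $\delta_t'Q\delta_t\in\LL_2$, so the squared centered penalty is in $\LL_1$ and all the displayed conditional moments exist and are integrable (the $m_{3}$ term pairs an $\LL_4$ vector with an $\LL_{4/3}$ vector via a conditional Cauchy--Schwarz/Hölder estimate, and $\hat x_t'm_{3,t-1}$ is then integrable by another Hölder step since $\hat x_t\in\LL_4$). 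Once integrability is in hand, Fubini/the tower property justifies exchanging the sum and the conditional and outer expectations, and the algebraic identity above is exact.

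I expect the main obstacle to be exactly this integrability bookkeeping in the partially-observed setting: unlike the fully-observed case, $\delta_t$ is neither i.i.d. nor independent of $\F_{t-1}$, so one cannot replace $W_{t-1}$ by a deterministic covariance or invoke independence to split expectations—every step must be done conditionally, and one must be careful that $\hat x_t$ (which enters quadratically in the first term and multiplied by the random vector $m_{3,t-1}$ in the second) really does have enough integrability for the products to be in $\LL_1$. The core quadratic-reformulation identity itself is then just the conditional-variance decomposition $\mathrm{Var}(X\mid\F)=\mathrm{Var}(\E(X\mid\G)\mid\F)+\E(\mathrm{Var}(X\mid\G)\mid\F)$ applied with $X=x_t'Qx_t$ and $\G=\sigma(\F_{t-1},x_t)$ refined appropriately, carried out explicitly in coordinates.
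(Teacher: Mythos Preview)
Your proposal is correct and follows essentially the same approach as the paper: decompose $x_t=\hat x_t+\delta_t$, expand the quadratic form, square the centered quantity $2\hat x_t'Q\delta_t+(\delta_t'Q\delta_t-\Tr(QW_{t-1}))$, and take conditional expectation term by term, with integrability handled by $x_t,\hat x_t,\delta_t\in\LL_4$ together with H\"older's inequality. The paper's own proof uses the same decomposition and the same H\"older step (with exponents $p+q=4$), so there is no substantive difference.
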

\begin{proof}
Define the state penalty difference:
\begin{equation}\label{eq:Delta}
	\Delta_t\triangleq x'_tQ x_t-\E\paren{x'_tQx_t|\F_{t-1}}.
\end{equation}
Since the inputs $u_t\in\LL_4$ and the disturbances $w_t\in\LL_4$ have finite fourth moments, it follows that $x_t\in\LL_4$ since it is a linear combination of inputs and disturbances. 
As a result, $x'_tQ x_t$ is integrable, and $\Delta_t$ is well defined.
Next, we find an expression for $\Delta^2_t$.
By the definition of $\hat{x}_t,\delta_t$
\[
x_t=\hat{x}_{t}+\delta_t,
\]
where the prediction $\hat{x}_{t}\in\LL_4$ is well-defined since $x_t\in\LL_4$. Similarly $\delta_t\in\LL_4$.
Based on the above decomposition, the quadratic form becomes
\[
x'_tQx_t=\hat{x}_{t}'Q\hat{x}_{t}+2\hat{x}_{t}'Q\delta_t+\delta_t'Q\delta_t.
\]
All three terms are integrable since $x_t\in\LL_4$ and $\delta_t\in\LL_4$.
By orthogonality, the cross terms have zero expected value:
\[
\E (\hat{x}_{t}'Q\delta_t|\F_{t-1})= \hat{x}_{t}'Q(\E(\delta_t|\F_{t-1}))=0.
\]
This implies that
\[
\E (x'_tQx_t|\F_{t-1})=\hat{x}_{t}'Q\hat{x}_{t}+\E(\delta_t'Q\delta_t|\F_{t-1})
\]
As a result, we obtain the expression
\[
	\Delta_t=\delta_t'Q\delta_t -\Tr(QW_{t-1})+2\hat{x}'_{t}Q\delta_{t},
\]
which leads to
\begin{equation*}
	\begin{aligned}
		\Delta^2_t&=(\delta_t'Q\delta_t -\Tr(QW_{t-1}))^2+4\hat{x}'_{t}Q\delta_{t}\delta'_{t}Q\hat{x}_{t}\\&\quad\,+4\hat{x}'_{t}Q\delta_{t}(\delta_t'Q\delta_t -\Tr(QW_{t-1})).
	\end{aligned}
\end{equation*}
Finally, we show that $\Delta^2_t$ are integrable.
Integrability of all terms follows from the existence of the fourth moments of $\delta_t,\,u_t,\,\hat{x}_t$ and Hölder's inequality
\[
\E \snorm{\alpha}^p_2 \snorm{\beta}^q_2 \le (\E \snorm{\alpha}^4_2)^{p/4}(\E \snorm{\beta}^4_2)^{q/4},
\]
for $p+q=4$, $p,q\ge 0$.
Hence, the total expected predictive-variance $\E\sum_{t=0}^{N-1}\Delta^2_t$ is well-defined. Moreover, we have
\begin{align*}
    	\E\set{\Delta_t^2|\F_{t-1}}&=4\hat{x}'_{t}Q W_{t-1}Q\hat{x}_{t}+ 2\hat{x}'_{t}m_{3,t-1}+m_{4,t-1}.
\end{align*}
To complete the proof, we take the expectation and move the $m_{4,t-1}$ terms to the right-hand side of the constraint.
\end{proof}
The above reformulation enables us to apply duality theory, as discussed next. Note that the equivalent constraint is quadratic. The quadratic and linear penalties $W_{t-1},m_{3,t-1}$ are random variables and depend on the observations up to time $t-1$.  If the prediction error $\delta_t$ is independent of the past $\F_{t-1}$, e.g. in the special case of fully observed systems or Gaussian noise, then the penalties are deterministic and the above expressions can be simplified. 
\section{Lagrangian Duality}\label{sec:duality}
To tackle problem~\eqref{eq:LQ_constrained}, we now consider the \textit{variational Lagrangian} $\LAG : \LL_{2}(\F_0) \times \dots \times \LL_{2}(\F_{N-1}) \times \mathbb{R}_+ \rightarrow \mathbb{R}$ of the sequential QCQP~\eqref{eq:LQ_constrained_reformulated}, defined as
\begin{align}\label{eq:Lagrangian}
&\LAG(u,\mu)\triangleq J(u)+\mu J_R(u)-\mu \bar{\epsilon},
\end{align}
where $\mu\in\mathbb{R}_+$ is a multiplier associated with the variational risk constraint of \eqref{eq:LQ_constrained_reformulated}. Hereafter, problem \eqref{eq:LQ_constrained_reformulated} will be called the \textit{primal problem}.
Accordingly, the \textit{dual function} $D\hspace{-0.5pt}\hspace{-0.5pt}:\hspace{-0.5pt}\hspace{-0.5pt}\mathbb{R}_{+}\hspace{-0.5pt}\hspace{-0.5pt}\rightarrow\hspace{-0.5pt}\hspace{-0.5pt}\hspace{-0.5pt}[-\infty,\infty)$
is additionally defined as
\begin{equation}\label{eq:FDUAL}
D(\mu)\triangleq\inf_{u\in{\cal U}_0}\mathpzc{L}(u,\mu),
\end{equation}
where the \textit{implicit feasible set} $\mathcal{U}_0$ obeys ($k\le N-1$)
\begin{equation}
\mathcal{U}_k \Neg{1} \triangleq \Neg{1.5} \set{\Neg{.5} u_{k:N-1} \Neg{1}\in\Neg{1} \prod_{t=k}^{N-1} \LL_{4}(\F_t) \Neg{-1}\Bigg|\Neg{-1} \begin{aligned}x_{t+1}\Neg{1}&=\Neg{1}Ax_t\Neg{1}+\Neg{1}Bu_t\Neg{1}+\Neg{1}w_{t+1}\\
y_t\Neg{1}&=\Neg{1}Cx_t\Neg{1}+\Neg{1}v_t\Neg{1}\end{aligned}\Neg{.5}}\Neg{1}\Neg{.5},\Neg{1}\nonumber
\end{equation}
and contains the constraints of \eqref{eq:LQ_constrained_reformulated} that have not been dualized in the construction of the Lagrangian in \eqref{eq:Lagrangian}.
Note that it is always the case that $D\le J^{*}$ on $\mathbb{R}_{+}$, where  $J^{*}\hspace{-1pt}\in\hspace{-1pt}[0,\infty]$
denotes the optimal value of the primal problem \eqref{eq:LQ_constrained_reformulated}.
Then, the optimal value of the always concave \textit{dual problem}
\begin{equation}\label{eq:dual}
\sup_{\mu\ge0} D(\mu) \equiv \sup_{\mu\ge0}\inf_{u\in{\cal U}_0}\mathpzc{L}(u,\mu),
\end{equation}
${D}^{*}\hspace{-2pt}\triangleq\hspace{-.5pt}\sup_{\mu\ge0}{D}(\mu)\hspace{-1pt}\in\hspace{-1pt}[-\infty,\hspace{-0.5pt}\infty]$,
is the tightest under-estimate of ${J}^{*}$, when knowing
only ${D}$.

Leveraging Lagrangian duality, we may now state the following result, which provides sufficient optimality conditions for the QCQP \eqref{eq:LQ_constrained_reformulated}. The proof is omitted, as it follows as direct application of \cite[Theorem 4.10]{ruszczynski2011nonlinear}.

\begin{theorem}[\textbf{Optimality Conditions}]\label{thm:KKT}
Let Assumption \ref{FOR_ASS_noise} be in effect. Suppose that there exists a feasible policy-multiplier pair $(u^*,\mu^*) \in \mathcal{U}_0 \times \mathbb{R}_+$ such that
\begin{enumerate}
    \item $\LAG(u^*(\mu^*),\mu^*)=\min_{u\in\mathcal{U}_0}\LAG(u,\mu^*)=D(\mu^*)$;
    
    \item $J_R(u^*)\le\bar{\epsilon}$, i.e., the dualized variational risk constraint of \eqref{eq:LQ_constrained_reformulated} is satisfied by control policy $u^*$;
    
    \item $\mu^*\Neg{.5}(J_R(u^*)-\bar{\epsilon})\Neg{2.1}=\Neg{1.8}0$, i.e., complementary slackness holds.
\end{enumerate}
Then, $u^*$ is optimal for both the primal problem \eqref{eq:LQ_constrained_reformulated} and the initial problem \eqref{eq:LQ_constrained}, $\mu^*$ is optimal for the dual problem \eqref{eq:dual}, and \eqref{eq:LQ_constrained_reformulated} exhibits zero duality gap, that is, $D^*\equiv P^*<\infty$.
\end{theorem}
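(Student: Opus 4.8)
The plan is to give the standard saddle-point argument showing that conditions 1--3 are \emph{sufficient} for joint primal-dual optimality; this direction needs neither a separation theorem nor a constraint qualification, only elementary inequalities on the real-valued functionals $J$ and $J_R$, so the infinite-dimensional nature of the decision variable (an element of $\prod_t\LL_4(\F_t)$) plays no role. First I would record weak duality on $\mathbb{R}_+$: for any $\mu\ge 0$ and any primal-feasible $u$ (i.e.\ $u\in\mathcal{U}_0$ with $J_R(u)\le\bar{\epsilon}$), the term $\mu(J_R(u)-\bar{\epsilon})$ is nonpositive, hence $\LAG(u,\mu)=J(u)+\mu(J_R(u)-\bar{\epsilon})\le J(u)$; taking the infimum first over primal-feasible $u$ and then over all $u\in\mathcal{U}_0$ gives $D(\mu)\le J^{*}$ for every $\mu\ge0$, and therefore $D^{*}\le J^{*}$.

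Next I would substitute the hypothesized pair $(u^{*},\mu^{*})$. By condition 1, $D(\mu^{*})=\LAG(u^{*},\mu^{*})=J(u^{*})+\mu^{*}\bigl(J_R(u^{*})-\bar{\epsilon}\bigr)$, a finite quantity. By complementary slackness (condition 3) the bracketed term vanishes, so $D(\mu^{*})=J(u^{*})$. By condition 2 together with $u^{*}\in\mathcal{U}_0$, the control $u^{*}$ is feasible for \eqref{eq:LQ_constrained_reformulated}, hence $J(u^{*})\ge J^{*}$. Chaining these facts with weak duality yields
\[
J^{*}\;\le\;J(u^{*})\;=\;D(\mu^{*})\;\le\;D^{*}\;\le\;J^{*},
\]
so every inequality is an equality. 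In particular $J(u^{*})=J^{*}$, so $u^{*}$ is optimal for the primal QCQP \eqref{eq:LQ_constrained_reformulated}; $D(\mu^{*})=D^{*}$, so $\mu^{*}$ attains the dual supremum and is optimal for \eqref{eq:dual}; and $D^{*}=J^{*}$, i.e.\ zero duality gap. Finiteness $J^{*}<\infty$ is automatic, since $u^{*}\in\mathcal{U}_0=\prod_t\LL_4(\F_t)$ forces, via the dynamics and Assumption~\ref{FOR_ASS_noise}, $x_t\in\LL_4\subset\LL_2$, so $J(u^{*})$ is finite. Finally, Proposition~\ref{prop:partially_existence} supplies the equivalence of \eqref{eq:LQ_constrained_reformulated} and the original \eqref{eq:LQ_constrained} (identical feasible controls, identical objective), whence $u^{*}$ is also optimal for the risk-constrained problem \eqref{eq:LQ_constrained}, as claimed.

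I do not anticipate a genuine obstacle: this is precisely the textbook sufficiency half of the KKT/saddle-point theorem, i.e.\ the content of \cite[Theorem~4.10]{ruszczynski2011nonlinear}, and it is insensitive to the variational setting because all manipulations are on scalars. The only point requiring a line of care is the implicit well-posedness and finiteness of $J(u^{*})$, $J_R(u^{*})$, and $D(\mu^{*})$, which is exactly what Assumption~\ref{FOR_ASS_noise}, the $\LL_4$ causality constraints, and Proposition~\ref{prop:partially_existence} guarantee. The genuinely hard part of the broader program --- actually \emph{exhibiting} a pair $(u^{*},\mu^{*})$ satisfying conditions 1--3 --- is not part of the present statement; it is deferred to the explicit inner-minimization constructions of Theorems~\ref{thm:optimal_LQR},~\ref{thm:partially_observed},~\ref{thm:gaussian} for fixed $\mu$, and to the bisection over $\mu$ in Theorem~\ref{thm:OPTIMAL} that enforces primal feasibility and complementary slackness.
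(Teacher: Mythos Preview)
Your proposal is correct and is precisely the standard saddle-point sufficiency argument that the paper invokes by citing \cite[Theorem~4.10]{ruszczynski2011nonlinear}; the paper itself omits the proof and defers to that reference, so your write-up is essentially a spelled-out version of the same approach.
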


Theorem \ref{thm:KKT} will be serving as the backbone of our analysis towards the solution to problem \eqref{eq:LQ_constrained_reformulated}. It is sufficient to compute the relaxed optimal input $u^*(\mu)$ of the Lagrangian in~\eqref{eq:FDUAL}, for any given multiplier $\mu\ge0$. Then, we can also compute an optimal multiplier $\mu^*$ via bisection, thus providing a complete solution to the primal problem.
The use of bisection is based on the following theorem (the proof can be found in the Appendix).
\begin{theorem}[\textbf{Optimal Multiplier}]\label{thm:OPTIMAL} Let Assumption \ref{FOR_ASS_noise} be in effect. Assume that for any $\mu\ge 0$ the minimum in~\eqref{eq:FDUAL} is attained by
a control policy $u^*(\mu)\in \mathcal{U}_0$. Assume that the risk constraint functional $J_R(u^*(\cdot))$ is a continuous function of $\mu$. Then, the following statements are true:
\begin{enumerate}
    \item The LQ cost $J(u^*(\mu))$ is increasing with $\mu\ge0$, while the risk constraint functional $J_R(u^*(\mu))$ is decreasing.
    \item
    Define the multiplier  
    \begin{align}\label{eq:Optimal_Multiplier}
    \mu^* \triangleq \inf\set{\mu \ge 0:\:J_R(u^*(\mu))\le \bar{\epsilon}}.
\end{align}
If $\mu^*$ is finite, then the policy $u^*(\mu^*)$ is optimal for the primal problem~\eqref{eq:LQ_constrained_reformulated}, and this is the case as long as 
\eqref{eq:LQ_constrained_reformulated} 
 satisfies
Slater's condition:
\[
J_R(u^{\dagger})<\bar{\epsilon},\,\text{ for some }u^{\dagger}\in \mathcal{U}_0.
\]
\end{enumerate} 
\end{theorem}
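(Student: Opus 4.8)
The plan is to prove Theorem~\ref{thm:OPTIMAL} by combining the monotonicity structure of the parametrized relaxed problem with the sufficient optimality conditions of Theorem~\ref{thm:KKT}. First I would establish part~(1). Fix $0 \le \mu_1 < \mu_2$ and write the two optimality inequalities obtained from minimality of $u^*(\mu_i)$ for $\LAG(\cdot,\mu_i) = J(\cdot) + \mu_i J_R(\cdot) - \mu_i\bar\epsilon$: namely $J(u^*(\mu_1)) + \mu_1 J_R(u^*(\mu_1)) \le J(u^*(\mu_2)) + \mu_1 J_R(u^*(\mu_2))$ and the symmetric inequality with $\mu_2$. Adding these two inequalities, the $J$ terms cancel and one is left with $(\mu_2 - \mu_1)\bigl(J_R(u^*(\mu_1)) - J_R(u^*(\mu_2))\bigr) \ge 0$, so $J_R(u^*(\mu))$ is nonincreasing in $\mu$. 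Feeding this monotonicity back into either single inequality (rearranged as $J(u^*(\mu_1)) - J(u^*(\mu_2)) \le \mu_1 (J_R(u^*(\mu_2)) - J_R(u^*(\mu_1))) \le 0$) yields that $J(u^*(\mu))$ is nondecreasing in $\mu$. This is the standard envelope/exchange argument and should be routine; the only subtlety is that I am not claiming strict monotonicity, which is fine since the statement only asserts ``increasing/decreasing'' in the weak sense consistent with the rest of the proof.

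Next I would turn to part~(2), verifying that $\mu^* := \inf\{\mu \ge 0 : J_R(u^*(\mu)) \le \bar\epsilon\}$, when finite, satisfies the three hypotheses of Theorem~\ref{thm:KKT} with the pair $(u^*(\mu^*),\mu^*)$. Condition~(1) of Theorem~\ref{thm:KKT} holds by the standing assumption that the infimum in~\eqref{eq:FDUAL} is attained at $u^*(\mu)$ for every $\mu \ge 0$. For conditions~(2) and~(3) I split into two cases. If $\mu^* = 0$, then by definition of the infimum there is a sequence $\mu_k \downarrow 0$ with $J_R(u^*(\mu_k)) \le \bar\epsilon$, and continuity of $J_R(u^*(\cdot))$ gives $J_R(u^*(0)) \le \bar\epsilon$, so condition~(2) holds and condition~(3) holds trivially since $\mu^* = 0$. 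If $\mu^* > 0$, then for every $\mu < \mu^*$ we have $J_R(u^*(\mu)) > \bar\epsilon$, and by continuity (taking $\mu \uparrow \mu^*$) we get $J_R(u^*(\mu^*)) \ge \bar\epsilon$; on the other hand, taking a sequence $\mu_k \downarrow \mu^*$ with $J_R(u^*(\mu_k)) \le \bar\epsilon$ and using continuity again gives $J_R(u^*(\mu^*)) \le \bar\epsilon$. Hence $J_R(u^*(\mu^*)) = \bar\epsilon$, which yields both condition~(2) and complementary slackness~(3). In either case Theorem~\ref{thm:KKT} applies and certifies that $u^*(\mu^*)$ is primal-optimal for~\eqref{eq:LQ_constrained_reformulated}.

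Finally I would show that Slater's condition $J_R(u^\dagger) < \bar\epsilon$ for some $u^\dagger \in \mathcal{U}_0$ guarantees $\mu^* < \infty$. The mechanism is that for large $\mu$ the minimizer $u^*(\mu)$ cannot have risk exceeding $\bar\epsilon$. Concretely, by minimality of $u^*(\mu)$ for $\LAG(\cdot,\mu)$ tested against the strictly feasible $u^\dagger$,
\begin{equation*}
J(u^*(\mu)) + \mu J_R(u^*(\mu)) \le J(u^\dagger) + \mu J_R(u^\dagger),
\end{equation*}
so using $J(u^*(\mu)) \ge 0$ (the LQ cost is a sum of positive-semidefinite quadratic forms, hence nonnegative) we obtain
\begin{equation*}
J_R(u^*(\mu)) \le J_R(u^\dagger) + \frac{J(u^\dagger)}{\mu}.
\end{equation*}
Since $J_R(u^\dagger) < \bar\epsilon$ strictly, choosing $\mu$ large enough that $J(u^\dagger)/\mu < \bar\epsilon - J_R(u^\dagger)$ forces $J_R(u^*(\mu)) < \bar\epsilon$, so the set in~\eqref{eq:Optimal_Multiplier} is nonempty and $\mu^*$ is finite; part~(2) then applies. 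The main obstacle I anticipate is not any single step but making sure the case analysis around $\mu^*$ is airtight — in particular the use of the continuity hypothesis on $J_R(u^*(\cdot))$ to pass to the limit from both sides of $\mu^*$, and the implicit fact that $u^*(\mu) \in \mathcal{U}_0$ for every $\mu$ so that all the tested policies are genuinely feasible for the undualized constraints. One should also note that $J(u^\dagger) < \infty$ because $u^\dagger \in \mathcal{U}_0 \subseteq \prod_t \LL_4(\F_t)$ and Assumption~\ref{FOR_ASS_noise} makes the induced state trajectory fourth-order integrable, so the LQ cost is finite; this is exactly the integrability bookkeeping already carried out in the proof of Proposition~\ref{prop:partially_existence}.
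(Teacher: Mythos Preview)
Your proposal is correct and follows essentially the same approach as the paper: the exchange argument for part~(1), the continuity-based case split for complementary slackness in part~(2), and testing the Lagrangian minimizer against the Slater point $u^\dagger$ to force $\mu^*<\infty$. The only cosmetic difference is in the Slater step, where you derive the direct bound $J_R(u^*(\mu)) \le J_R(u^\dagger) + J(u^\dagger)/\mu$ using $J\ge 0$, whereas the paper argues by contradiction through the dual function; your version is slightly cleaner but relies on the same optimality inequality.
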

The above result exploits the fact that, under the relaxed optimal policy $u^*(\cdot)$, both the LQ cost $J(u^*(\cdot))$ and the risk  functional $J_R(u^*(\cdot))$ are monotone functions.
Note that in order to apply Theorem~\ref{thm:OPTIMAL}, we need to verify three conditions i) existence of an optimal solution~$u^*(\mu)$, ii) continuity of $J_R(u^*(\mu))$, and iii) satisfaction of Slater's condition. This is the subject of the following sections.
\section{Optimal Risk-Aware LQR Control}\label{Section_Fully_Observed}
Let us study first the simpler case of fully-observed systems, where $y_k=x_k$, i.e. there is no measurement noise $v_k=0$ and the output matrix is the identity $C=I$. This problem is the risk-constrained version of the classical Linear Quadratic Regulator (LQR) problem. In this case, the conditional moments in Proposition~\ref{prop:partially_existence} can be simplified significantly leading to an optimal control law which is easy to interpret, providing intuition for the solution of the general risk-aware LQ problem.

Let $\mu\ge0$ be arbitrary but fixed. First, we may simplify the form of the Lagrangian $\LAG$ and express it within a canonical dynamic programming framework. In this respect, we have the following straightforward, but key result.
\begin{lemma}[\textbf{Lagrangian Reformulation}]\label{lem:Lagrangian_Dynamic_Programming}
Assume that system~\eqref{FOR_EQN_system} is fully-observed: $y_k=x_k$ for all $k\ge 0$. Let Assumption~\ref{FOR_ASS_noise} be in effect. Consider the sequential variational QCQP problem~\eqref{eq:LQ_constrained_reformulated}.  Consider the notation of Proposition~\ref{prop:partially_existence}. 
Define the inflated state penalty matrix
\begin{equation}
    Q_{\mu} \triangleq Q+4\mu QWQ.\nonumber
\end{equation}
Then the innovation process $\delta_k=w_k-\bar{w}$ is i.i.d. and independent of $\F_{k-1}$ with
\begin{align*}
    W_{t-1}&=W,\quad
    m_{3,t-1}=m_3\triangleq 2Q\E\set{\delta_{t}\delta_t'Q\delta_t}\\
    m_{4,t-1}&=m_4\triangleq \E\set{	(\delta_t'Q\delta_t -\Tr(QW))^2}.
\end{align*}
Moreover, for every $u_t\in\LL_4(\F_t)$, $t\le N-1$, the Lagrangian function $\LAG$ can be expressed as
	\begin{equation}\label{ANA_EQN_Lagrangian}
	\LL(u,\mu)\hspace{-1pt}=\hspace{-1pt}  \E\hspace{-1pt}\set{\sum_{t=1}^{N}g_{t}(x_t,u_{t-1},\mu)\hspace{-1pt}}\hspace{-.5pt}+g(\mu),\hspace{-1pt}
	\end{equation}
	where
	\begin{align*}
		g_t(x_t,u_{t-1},\mu)&\triangleq x_t'Q_{\mu}x_t+2\mu m_3'x_t+u_{t-1}'Ru_{t-1},\, t\le N\\
g(\mu)&\triangleq \mu\paren{-\bar{\epsilon}-4N\Tr{(WQ)^2}}+x_0'Qx_0.
	\end{align*}
\end{lemma}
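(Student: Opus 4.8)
The plan is to establish the two assertions of the lemma in turn: first the simple description of the innovation process $\delta_k=w_k-\bar{w}$ and the resulting constancy of the conditional moments, and then the closed-form expression for the Lagrangian.

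For the first part, I would use that in the fully-observed case $\F_{t-1}=\sigma(x_{0:t-1},u_{0:t-1})$. Since $x_t=Ax_{t-1}+Bu_{t-1}+w_t$ with $u_{t-1}$ being $\F_{t-1}$-measurable and $w_t$ independent of $\F_{t-1}$ with mean $\bar{w}$, conditioning gives $\hat{x}_t=\E(x_t|\F_{t-1})=Ax_{t-1}+Bu_{t-1}+\bar{w}$, hence $\delta_t=x_t-\hat{x}_t=w_t-\bar{w}$. This is i.i.d.\ (because $(w_t)$ is), zero-mean, and independent of $\F_{t-1}$ by the modeling assumption. Consequently the conditional moments of Proposition~\ref{prop:partially_existence} lose their dependence on $\F_{t-1}$: $W_{t-1}=\E(\delta_t\delta_t'|\F_{t-1})=\E(\delta_t\delta_t')=W$, and likewise $m_{3,t-1}=m_3$ and $m_{4,t-1}=m_4$ become the stated deterministic quantities.

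For the second part, I would start from $\LAG(u,\mu)=J(u)+\mu J_R(u)-\mu\bar{\epsilon}$ and invoke the quadratic reformulation of $J_R$ from Proposition~\ref{prop:partially_existence} with the deterministic moments just obtained, so that $J_R(u)=\E\sum_{t=1}^{N}\{4\hat{x}_t'QWQ\hat{x}_t+2m_3'\hat{x}_t\}$. The one genuinely substantive step is to re-express this in terms of $x_t$ rather than $\hat{x}_t$. Writing $\hat{x}_t=x_t-\delta_t$ and expanding,
\begin{align*}
\E(\hat{x}_t'QWQ\hat{x}_t)&=\E(x_t'QWQx_t)-2\E(x_t'QWQ\delta_t)+\E(\delta_t'QWQ\delta_t),\\
\E(m_3'\hat{x}_t)&=\E(m_3'x_t)-\E(m_3'\delta_t).
\end{align*}
Here $\E(m_3'\delta_t)=m_3'\E\delta_t=0$; and, splitting $x_t=\hat{x}_t+\delta_t$ once more and using the tower property, $\E(x_t'QWQ\delta_t)=\E(\hat{x}_t'QWQ\,\E(\delta_t|\F_{t-1}))+\E(\delta_t'QWQ\delta_t)=\E(\delta_t'QWQ\delta_t)$, since $\hat{x}_t$ is $\F_{t-1}$-measurable and $\E(\delta_t|\F_{t-1})=0$. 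Because $\E(\delta_t'QWQ\delta_t)=\Tr(QWQW)=\Tr((WQ)^2)$ by cyclicity of the trace, we get $\E(\hat{x}_t'QWQ\hat{x}_t)=\E(x_t'QWQx_t)-\Tr((WQ)^2)$ and $\E(m_3'\hat{x}_t)=\E(m_3'x_t)$. Every expectation appearing here is finite by the integrability already established in the proof of Proposition~\ref{prop:partially_existence} (Hölder's inequality together with Assumption~\ref{FOR_ASS_noise} and $u_t\in\LL_4(\F_t)$), which legitimizes splitting the sums and expectations.

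It then remains to assemble the pieces. Substituting into $\mu J_R(u)$ produces $\E\sum_{t=1}^{N}\{4\mu x_t'QWQx_t+2\mu m_3'x_t\}-4N\mu\Tr((WQ)^2)$; and reindexing $J(u)$ by peeling the deterministic term $x_0'Qx_0$ off the state-penalty sum and shifting the input-penalty sum gives $J(u)=x_0'Qx_0+\E\sum_{t=1}^{N}\{x_t'Qx_t+u_{t-1}'Ru_{t-1}\}$. Adding these together with $-\mu\bar{\epsilon}$ and collecting the quadratic-in-$x_t$ terms as $x_t'(Q+4\mu QWQ)x_t=x_t'Q_{\mu}x_t$ yields exactly $\LAG(u,\mu)=\E\sum_{t=1}^{N}g_t(x_t,u_{t-1},\mu)+g(\mu)$ with $g_t$ and $g(\mu)$ as stated. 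I do not expect a real obstacle here: the only points that require care are the vanishing of the conditional cross terms (where the martingale-difference property of $\delta_t$ is used) and the bookkeeping of the index shifts and of the constant $g(\mu)$.
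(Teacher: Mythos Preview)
Your proposal is correct and follows essentially the same route as the paper: both arguments reduce the conditional moments to constants via $\delta_t=w_t-\bar w$ and then pass from $\hat x_t$ to $x_t$ using the identities $\E(\hat x_t'QWQ\hat x_t)=\E(x_t'QWQx_t)-\E(\delta_t'QWQ\delta_t)$ and $\E(m_3'\hat x_t)=\E(m_3'x_t)$, before collecting terms into $Q_\mu$ and $g(\mu)$. Your write-up simply makes explicit the cross-term cancellation and the trace computation $\E(\delta_t'QWQ\delta_t)=\Tr((WQ)^2)$ that the paper leaves implicit.
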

\begin{proof}
The  properties of $\delta_t$ follow immediately from~\eqref{FOR_EQN_system}, full observability, and the fact that $w_k$ is i.i.d. As a result, all moments $W_t,m_{3,t},m_{4,t}$ are deterministic and constant over time.
For the Lagrangian reformulation, we used Proposition~\ref{prop:partially_existence}, the form of $\LAG$, and the identities
\begin{align*}
\E (\hat{x}'_k QWQ\hat{x}_k)&=\E (x_k QWQx_k)-\E (\delta'_k QWQ\delta_k)\\
\E(\delta'_k m_3)&=0,\, \E(\hat{x}'_k m_3)=\E(x'_k m_3).\qedhere
\end{align*}
\end{proof}
\begin{remark}[\textbf{Relation to LQR with tracking}]\label{ANA_REM_LQR}
The Lagrangian~\eqref{ANA_EQN_Lagrangian} has the structure of a generalized LQR problem with a tracking objective. Substituting for $m_3=QM_3$, where
\[
M_3\triangleq 2\E\set{\delta_t\delta'_tQ \delta_t},
\]
 we can rewrite the stage cost as
\begin{align*}
    g_{t}(x_t,u_t,\mu)=(x_t+\mu M_3)'Q(x_t+\mu M_3)\Neg{-12}\\
+x'_t(4\mu QWQ)x_t+u_t'Ru_t-\mu^2 M_3'QM_3,
\end{align*}
i.e., the state penalty is quadratic and consists of two distinct terms. The first one, i.e., $(x_t+\mu M_3)'Q(x_t+\mu M_3)$ is a tracking error term that forces the state to be close to the static target $-\mu M_3$. Informally, in the case of skewed noise, by tracking $-\mu M_3$ we pre-compensate for directions in which the distribution of the noise has heavy tails. This decreases the statistical variability of the predicted stage cost.  The second term, $x_t'(4\mu QWQ)x_t$, is a standard quadratic penalty term;  notice that, contrary to the risk-neutral case, the covariance of the noise $W$ now affects the penalty term. Informally, this term penalizes state directions which not only lead to high cost but are also more sensitive to noise, as captured by the product $QWQ$. 
Hence, the risk-neutral LQR framework can exhibit inherent risk-averse properties, provided that its parameters are selected in a principled way. Of course, selecting those parameters \emph{a priori} is not trivial.\hfill $\diamond$
\end{remark}

The structure of the Lagrangian as suggested by Lemma~\ref{lem:Lagrangian_Dynamic_Programming} enables us to derive both a closed-form expression for its minimum 
and an explicit optimal control policy. To this end, define the \textit{optimal cost-to-go} at stage $k\le N-1$ as
\begin{equation}
\begin{aligned}
& \LAG^*_k(x_k,\mu) \triangleq\inf_{u_{k:N-1} \in  \mathcal{U}_k} \E \set{\sum_{t=k}^{N-1}g_{t+1}(x_{t+1},u_{t},\mu)\Bigg|\F_k},\nonumber
\end{aligned}
\end{equation}
where we omit the constant components of the Lagrangian. Under this definition, it is true that
\[
D(\mu)\equiv\inf_{u\in\mathcal{U}_0} \LAG(u,\mu)=\LAG^*_0(x_0,\mu)+g(\mu).
\]
We may now derive the complete solution to \eqref{eq:FDUAL}, which provides optimal risk-aware control policies for every multiplier $\mu\ge0$.

\begin{theorem}[\textbf{LQR Risk-Aware Controllers}]\label{ANA_THM_Optimal_Input}
    Assume that system~\eqref{FOR_EQN_system} is fully-observed: $y_k=x_k$ for all $k\ge 0$. Let Assumption~\ref{FOR_ASS_noise} be in effect, choose $\mu\ge0$, and adopt the notation of Lemma \ref{lem:Lagrangian_Dynamic_Programming}. For $t\le N-1$, the optimal cost-to-go $\LAG^*_t(x_t,\mu)$ may be expressed as
\begin{align*}
\LAG^*_t(x_t,\mu)= x'_{t}(V_{t}-Q_{\mu})x_{t} +2(\xi_t-\mu m_{3})' x_{t}+c_{t},
\end{align*}
where the quantities $V_t$, $\xi_t$, and $c_t$ are evaluated through the backward recursions
	\begin{align}
\label{eq:Riccati_Recursions_Fully}
V_{t-1}&\Neg{1}=\Neg{1}A'V_{t}A\hspace{-.5bp}+\hspace{-.5bp}Q_{\mu}\hspace{-1.5bp}-\hspace{-1.5bp}A'V_{t}B(B'V_{t}B\hspace{-.5bp}+\hspace{-.5bp}R)^{-1}B'V_{t}A,\\
K_{t-1}&\Neg{1}=\Neg{1}-(B'V_{t}B+R)^{-1}B'V_{t}A,\\
\xi_{t-1}	&\Neg{1}=\Neg{1}(A+BK_{t-1})'(\xi_{t}+V_{t}\bar{w})+\mu m_{3},\\
l_{t-1}&\Neg{1}=-(B'V_{t}B+R)^{-1}B'(\xi_t+V_t \bar{w}),\\
c_{t-1}&\Neg{1}=\Neg{0.5}c_t+\Tr (WV_{t})+2\xi'_t\bar{w}+\bar{w}'V_t\bar{w}\nonumber \\
&\Neg{-6}-l_{t-1}'(B'V_tB+R)l_{t-1},
\end{align}
with terminal values $V_N=Q_{\mu}$, $\xi_N=\mu m_{3}$, and $c_N=0$.
Additionally, an optimal control policy 
that achieves the dual value in \eqref{eq:FDUAL} may be expressed as
		\begin{align}\label{eq:LQR_Optimal_Input}
u^*_{t}(\mu)=K_{t}x_{t}+l_{t} \in  \LL_4(\F_t),\,\,\,\forall t\le N-1,
	\end{align}
and is unique up to sets of probability measure zero. 
\end{theorem}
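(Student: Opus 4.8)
The plan is to solve the variational problem in \eqref{eq:FDUAL} by backward dynamic programming, exploiting the additive structure of the Lagrangian established in Lemma~\ref{lem:Lagrangian_Dynamic_Programming}. Since the system is fully observed, $\F_t = \sigma(x_{0:t})$ and, conditionally on $\F_t$, the only randomness entering $x_{t+1} = Ax_t + Bu_t + w_{t+1}$ is the i.i.d. noise $w_{t+1}$, whose mean is $\bar w$ and whose conditional covariance is $W$ (both independent of $\F_t$ by Lemma~\ref{lem:Lagrangian_Dynamic_Programming}). I would proceed by backward induction on $t$, with the induction hypothesis being exactly the claimed quadratic-in-$x_t$ form of $\LAG^*_t(x_t,\mu)$: a Hessian term $V_t - Q_\mu$, a linear term $2(\xi_t - \mu m_3)'x_t$, and a constant $c_t$. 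The terminal case $t=N$ is vacuous (empty sum), and adding back the stage cost $g_N(x_N,u_{N-1},\mu)$ recovers $V_N = Q_\mu$, $\xi_N = \mu m_3$, $c_N = 0$.

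\medskip

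For the inductive step I would write the Bellman recursion
\[
\LAG^*_{t-1}(x_{t-1},\mu) = \inf_{u_{t-1}\in\LL_4(\F_{t-1})} \E\set{ g_t(x_t,u_{t-1},\mu) + \LAG^*_t(x_t,\mu)\,\middle|\,\F_{t-1}},
\]
substitute $x_t = Ax_{t-1} + Bu_{t-1} + w_t$, use the induction hypothesis together with $g_t(x_t,u_{t-1},\mu) = x_t'Q_\mu x_t + 2\mu m_3'x_t + u_{t-1}'Ru_{t-1}$ so that the $-Q_\mu$ and $+Q_\mu$ cancel and the $-\mu m_3$ and $+\mu m_3$ cancel, leaving a clean quadratic in $x_t$ with Hessian $V_t$ and linear term $2\xi_t'x_t$ plus $u_{t-1}'Ru_{t-1} + c_t$. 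Taking the conditional expectation over $w_t$ replaces $x_t$ by $Ax_{t-1}+Bu_{t-1}+\bar w$ in the linear and (through the square) quadratic terms and produces the extra constant $\Tr(WV_t)$ (from $\E[w_t'V_t w_t\mid\F_{t-1}] = \bar w'V_t\bar w + \Tr(WV_t)$, after recentering). The resulting objective is jointly quadratic and strictly convex in $u_{t-1}$ because $B'V_tB + R \succ 0$ (here one needs $V_t \succeq 0$, which follows inductively since $Q,Q_\mu\succeq 0$ and a Riccati operator preserves positive semidefiniteness, together with $R\succ 0$); hence the unconstrained minimizer is the unique stationary point, giving $u^*_{t-1} = K_{t-1}x_{t-1} + l_{t-1}$ with $K_{t-1}, l_{t-1}$ exactly as stated. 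Plugging this minimizer back and completing the square yields the quadratic-in-$x_{t-1}$ form with $V_{t-1}$, $\xi_{t-1}$, $c_{t-1}$ matching the stated recursions — the $V_{t-1}$ recursion is the standard algebraic identity $A'V_tA + Q_\mu - A'V_tB(B'V_tB+R)^{-1}B'V_tA$, the $\xi_{t-1}$ recursion follows by collecting the terms linear in $x_{t-1}$ and using $A + BK_{t-1}$, and $c_{t-1}$ by collecting constants, with $l_{t-1}'(B'V_tB+R)l_{t-1}$ appearing from the completed square. This also confirms the induction hypothesis propagates and, at $t=0$, gives $D(\mu) = \LAG^*_0(x_0,\mu) + g(\mu)$ consistent with the displayed formula.

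\medskip

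Two technical points need care. First, measurability and integrability: one must check that the candidate $u^*_{t-1} = K_{t-1}x_{t-1}+l_{t-1}$ is $\F_{t-1}$-measurable (immediate, since $x_{t-1}$ is) and lies in $\LL_4(\F_{t-1})$; this follows because, under a linear feedback policy, $x_t$ is a linear combination of $x_0$ and $w_{1:t}$, and $w_k\in\LL_4$ by Assumption~\ref{FOR_ASS_noise}, so all states and inputs along the closed loop have finite fourth moments, and inductively $\LAG^*_t(x_t,\mu)$ is integrable so the conditional expectations are well defined. Second, one must justify that the \emph{pointwise} (in $x_{t-1}$, or rather $\omega$-wise) minimization over $\R^p$ legitimately yields the infimum over the function class $\LL_4(\F_{t-1})$; this is a standard interchange-of-infimum-and-integration argument valid because the integrand is, conditionally, a deterministic strictly convex quadratic in the control value whose minimizer is measurable in the conditioning variables, so selecting the pointwise minimizer gives an admissible policy attaining the bound. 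Uniqueness up to $\P$-null sets is then immediate from strict convexity ($B'V_tB+R\succ 0$): any other minimizer must agree with $K_{t-1}x_{t-1}+l_{t-1}$ almost surely.

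\medskip

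The main obstacle I anticipate is not the Riccati algebra (routine) but the rigorous handling of the variational aspect: arguing that the dynamic programming principle is valid over the implicit feasible sets $\mathcal{U}_k$ of $\LL_4$-policies — i.e. that $\LAG^*_k$ as defined really satisfies the Bellman recursion with the conditional expectation, and that the pointwise minimizers assemble into an admissible policy in $\mathcal{U}_0$ — while simultaneously keeping track of the $\LL_4$ integrability that Assumption~\ref{FOR_ASS_noise} is there to supply. Once that measure-theoretic bookkeeping is in place, the explicit formulas drop out of completing the square.
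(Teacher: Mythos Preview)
Your proposal is correct and follows essentially the same route as the paper: the paper defers the argument to the proof of Theorem~\ref{thm:partially_observed}, which is precisely backward dynamic programming via the Bellman recursion, minimizing the resulting strictly convex quadratic in $u_t$ and reading off the Riccati-type recursions, with the only additional remark for the fully-observed case being the verification that $u^*_t\in\LL_4(\F_t)$ (since here $K_t,l_t$ are deterministic, so the closed-loop state is a linear combination of $\LL_4$ variables under Assumption~\ref{FOR_ASS_noise}). Your handling of the measurability/integrability bookkeeping and the interchange of infimum and conditional expectation matches the paper's treatment.
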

\begin{proof}
The proof is similar to that of Theorem~\ref{thm:partially_observed} in Section~\ref{Section_Partially} and is, thus, omitted.  The only difference is that we need to verify that the input has bounded fourth moments $u^*_t(\mu)\in  \LL_4(\F_t)$ under Assumption~\ref{FOR_ASS_noise}. This can be inferred recursively by~\eqref{eq:LQR_Optimal_Input} and by the fact that $K_t,l_t$ are deterministic constants for all $t\ge 0$ (at all time steps the input is a linear combination of random variables with bounded fourth moments). 
\end{proof}

As suggested by Remark~\ref{ANA_REM_LQR}, it turns out that the optimal controller~\eqref{eq:LQR_Optimal_Input} is affine with respect to the state. If we expand $\xi_t$, we can see that the affine term $\ell_t$ consists of two components:
\[
l_t= -(B'V_{t}B+R)^{-1}B'(S_t \mu m_3+T_t \bar{w}),
\]
for some appropriate matrices $S_t,T_t$:
\begin{align*}
S_t&=(A+BK_t)'S_{t+1}+I,\quad S_N=I\\
T_t&=(A+BK_t)'(T_{t+1}+V_{t+1}),\quad T_N=0.
\end{align*}
One component forces the state to track the reference $-\mu m_3$, which points away from 
heavy-tailed regions of the noise distribution. The other component acts against the mean value of the noise--such a term also appears in risk-neutral LQR.
Meanwhile, the state-feedback term accounts for the internal dynamics. Similar to the risk-neutral case, the controller's behavior is governed by a Riccati difference equation~\eqref{eq:Riccati_Recursions_Fully}. However, we now have an inflated stage cost matrix $Q_{\mu}=Q+4\mu QWQ$, instead of the original.  As suggested by the product $QWQ$, the risk-aware control gain becomes more strict in directions that are simultaneously more costly and prone to noise, as captured by the covariance $W$. As a sanity check, we can verify that for $\mu=0$, we recover the risk-neutral LQR optimal controller, i.e. $Q_{0}=Q$ and $l_t$ depends only on the mean value of the noise $\bar{w}$

Since $V_t$ in~\eqref{eq:Riccati_Recursions_Fully} satisfies a standard Riccati difference equation with an inflated matrix $Q_{\mu}$, we immediately obtain from standard LQR theory that for any $\mu\ge 0$, the optimal controller~\eqref{eq:LQR_Optimal_Input} will be internally stable. Matrix $A+BK_t$ will converge and its spectral radius will eventually be bounded as $\rho(A+BK_t)<1$, as the horizon $N$ grows to infinity. The conditions for stability remain the same as in risk-neutral LQR.
\begin{assumption}[\textbf{Controllability}]\label{ass:controllability}
The pair $(A,B)$ is stabilizable, the pair $(A,Q^{1/2})$ is detectable, matrix $Q\succeq 0$ is positive semi-definite and matrix $R\succ 0$ is positive definite.
\end{assumption}
\begin{corollary}[\textbf{Internal Stability}]\label{cor:stability}
Let Assumptions \ref{FOR_ASS_noise} and \ref{ass:controllability} be in effect, and adopt the notation of Lemma \ref{lem:Lagrangian_Dynamic_Programming}.
For fixed $\mu\ge0$, consider the control policy $u^*(\mu)$, as defined in~\eqref{eq:LQR_Optimal_Input}. As $N\rightarrow \infty$, $V_t$ converges exponentially fast to the unique stabilizing solution\footnote{A stabilizing solution renders $A+BK$ stable.} of the algebraic Riccati equation
\[
V=A'VA+Q_{\mu}- A'VB(B'VB+R)^{-1}B'VA.
\] 
As a result, for every $t\ge 0$, it is true that, as $N\rightarrow \infty$, 
\begin{align*}
    K_t&\rightarrow K\triangleq -(B'VB+R)^{-1}B'VA,\\
    \xi_t&\rightarrow \xi \triangleq (I-(A+BK)')^{-1}\set{(A+BK)'V\bar{w}+\mu m_3},\\
    l_t&\rightarrow l \triangleq -(B'VB+R)^{-1}B'(\xi+V\bar{w}),
\end{align*}
exponentially fast, and the closed-loop matrix $A+BK$ is stable (spectral radius $\rho(A+BK)<1$).
\end{corollary}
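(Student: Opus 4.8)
The plan is to recognize that the recursion \eqref{eq:Riccati_Recursions_Fully} for $V_t$, $K_t$ is \emph{literally} the standard finite-horizon LQR Riccati difference equation, the only modification being that the original state penalty $Q$ has been replaced by the inflated penalty $Q_\mu = Q + 4\mu QWQ$; everything then follows by invoking the classical convergence theory for the discrete-time algebraic Riccati equation, after first checking that its hypotheses are preserved under this replacement, and finally propagating the resulting (exponential) convergence through the affine recursions for $\xi_t$ and $l_t$.

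First I would verify that the classical hypotheses carry over from $Q$ to $Q_\mu$. Since $W \succeq 0$ (a covariance), writing $W = W^{1/2}W^{1/2}$ with $W^{1/2}$ symmetric gives $QWQ = (QW^{1/2})(QW^{1/2})' \succeq 0$, hence $Q_\mu \succeq Q \succeq 0$ under Assumption \ref{ass:controllability}. Next, $(A, Q_\mu^{1/2})$ is detectable: if some (possibly complex) eigenvector $v \neq 0$ of $A$ with $|\lambda| \ge 1$ satisfied $Q_\mu^{1/2} v = 0$, then $0 = v^* Q_\mu v \ge v^* Q v \ge 0$ would force $Q^{1/2} v = 0$, contradicting detectability of $(A, Q^{1/2})$. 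Together with $(A,B)$ stabilizable and $R \succ 0$, this is exactly the setting in which the Riccati recursion with PSD terminal condition $V_N = Q_\mu$ converges, as $N \to \infty$, exponentially fast to the unique symmetric PSD stabilizing solution $V$ of the stated algebraic Riccati equation, with limiting gain $K = -(B'VB+R)^{-1}B'VA$ rendering $A+BK$ Schur stable, $\rho(A+BK) < 1$; this I would simply cite (see, e.g., \cite{bertsekas2017dynamic,anderson2005optimal}). The convergence rate is governed by $\rho(A+BK)^2$, and it immediately yields $K_t \to K$ exponentially fast, since $K_t$ is a smooth rational function of $V_t$ on a neighborhood where $B'V_tB + R$ stays uniformly positive definite.

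It then remains to transfer exponential convergence to $\xi_t$ and $l_t$. For $\xi_t$, I would rewrite its recursion as a backward linear time-varying iteration $\xi_{t-1} = (A+BK_{t-1})'\xi_t + b_{t-1}$ with $b_{t-1} = (A+BK_{t-1})'V_t\bar w + \mu m_3 \to (A+BK)'V\bar w + \mu m_3 =: b$ exponentially. Unrolling from $\xi_N = \mu m_3$ down to a fixed index $t$, the product of transition matrices contracts geometrically as $N - t$ grows because the factors converge to the stable matrix $(A+BK)'$; combining this with the exponentially decaying perturbations $b_{t-1} - b$ and $(A+BK_{t-1})' - (A+BK)'$ through a standard geometric-series / discrete Gronwall estimate shows $\xi_t \to \xi$ exponentially fast, where $\xi$ is the fixed point of $\xi = (A+BK)'\xi + b$, i.e., $\xi = (I - (A+BK)')^{-1}\{(A+BK)'V\bar w + \mu m_3\}$, well-defined precisely because $1$ is not an eigenvalue of $A+BK$ (as $\rho(A+BK) < 1$). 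Finally $l_t = -(B'V_tB+R)^{-1}B'(\xi_t + V_t\bar w) \to -(B'VB+R)^{-1}B'(\xi + V\bar w) = l$ exponentially, as a continuous function of the exponentially convergent pair $(V_t,\xi_t)$ near the invertible matrix $B'VB + R$; uniform boundedness of all iterates (needed to convert "convergence of the data" into a geometric rate for the iterates) follows from the convergence itself.

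The classical Riccati convergence statement is used as a black box, so the only genuine—if light—work lies in the last step: propagating the \emph{rate}, not merely convergence, through the time-varying affine recursion for $\xi_t$; handling this perturbed linear iteration cleanly is the main obstacle, while everything else reduces to a positivity check, a PBH argument, and continuity. Note that Assumption \ref{FOR_ASS_noise} enters only through Lemma \ref{lem:Lagrangian_Dynamic_Programming}, ensuring $W$ and $m_3$ are well-defined finite quantities so that the recursions make sense.
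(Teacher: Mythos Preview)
Your proposal is correct and follows essentially the same approach as the paper: check that $Q_\mu\succeq Q$ preserves detectability of $(A,Q_\mu^{1/2})$, invoke standard discrete-time Riccati convergence theory for $V_t$ and $K_t$, and then propagate the convergence through the affine recursions for $\xi_t$ and $l_t$. In fact you are considerably more explicit than the paper, which dispatches the last step with ``the proof of the convergence of the remaining terms follows similar steps''; your perturbed linear-iteration argument for $\xi_t$ is exactly what is needed there.
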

\begin{proof}
Since $Q_{\mu}\succeq Q$ and $(A,Q^{1/2})$ is detectable, the pair $(A,Q_{\mu}^{1/2})$ is also detectable.
Since $(A,B)$ is stabilizable, $(A,Q_{\mu}^{1/2})$ is detectable, and $R\succ 0$, the exponential convergence of $V_t$ and $K_t$ to $V$ and $K$ respectively, and the stability of $A+BK$ follow from standard LQR theory~\cite[Chapter 4]{anderson2005optimal}.  The proof of the convergence of the remaining terms follows similar steps.
\end{proof}
\subsection{Recovery of Primal-Optimal Solutions}
Up to now we have discussed the properties of the optimal controller given a fixed $\mu\ge0$. In what follows, we show how to  compute an optimal multiplier $\mu^*$ based on Theorems~\ref{thm:KKT},~\ref{thm:OPTIMAL}. For any fixed $\mu\ge0$, we provide a closed-form expression for evaluating the risk functional $J_R(u^*(\mu))$. Moreover, we show that $J_R(u^*(\cdot))$ is a continuous function of $\mu$. Hence, if Slater's condition is satisfied, then based on Theorem~\ref{thm:OPTIMAL}, we can find the optimal multiplier $\mu^*$ by trivially applying bisection on $\mu$. 

The evaluation of the risk constraint functional $J_R(u^*(\mu))$ may be performed in a recursive fashion, as the following result suggests.
\begin{proposition}[\textbf{Risk Functional Evaluation}]\label{prop:LQR_Risk_Evaluation}  Assume that system~\eqref{FOR_EQN_system} is fully-observed: $y_k=x_k$ for all $k\ge 0$. Let Assumption \ref{FOR_ASS_noise} be in effect, and adopt the notation of Lemma~\ref{lem:Lagrangian_Dynamic_Programming}.
For fixed $\mu\ge0$, consider the control policy $u^*(\mu)$, as defined in~\eqref{eq:LQR_Optimal_Input}.
With terminal values $P_N=4QWQ$, $\zeta_N=m_{3}$, $d_N=0$, consider the backward recursions
\begin{align}
&P_{t-1}=(A+BK_{t-1})'P_t(A+BK_{t-1})+4QWQ,\nonumber\\
&\zeta_{t-1}=(A+BK_{t-1})'\zeta_t+m_3\nonumber\\
&\,\,\,+(A+BK_{t-1})'P_t\paren{Bl_{t-1}+\bar{w}}\,\,\,\, \textrm{and}\nonumber\\
&d_{t-1}=d_t+\Tr\big([P_{t-1}-4QWQ]W\big)\nonumber\\
&\,\,\,+2\zeta_t'(\bar{w}+Bl_{t-1})+(Bl_{t-1}+\bar{w})'P_{t}(Bl_{t-1}+\bar{w}).\nonumber
\end{align}
Then, the risk constraint in problem \eqref{eq:LQ_constrained_reformulated} may be evaluated by
\begin{align*}
J_R(u^*(\mu))&=x'_0(P_0-4QWQ)x_0+2(\zeta'_0-m_3)x_0\\
&+d_0-\Tr\big([P_{0}-4QWQ]W\big)
\end{align*}
Moreover, the risk-functional $J_R(u^*(\cdot))$ is continuous.
\end{proposition}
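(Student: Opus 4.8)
The plan is a standard backward-induction evaluation of a quadratic functional along the closed-loop trajectory, followed by a short continuity argument.

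\textbf{Step 1 — reduce the risk functional.} I would first use full observability and Lemma~\ref{lem:Lagrangian_Dynamic_Programming}: there $\delta_t = w_t - \bar w$ is i.i.d.\ and independent of $\F_{t-1}$, so $W_{t-1}\equiv W$ and $m_{3,t-1}\equiv m_3$ are deterministic, and Proposition~\ref{prop:partially_existence} reduces to $J_R(u) = \E\sum_{t=1}^{N}(4\,\hat{x}_t'QWQ\hat{x}_t + 2\,\hat{x}_t'm_3)$. Substituting the optimal feedback $u^*_t(\mu)=K_t x_t+l_t$ of Theorem~\ref{ANA_THM_Optimal_Input} (which lies in $\LL_4(\F_t)$, so the functional is finite), the prediction satisfies $\hat{x}_t = (A+BK_{t-1})x_{t-1} + Bl_{t-1} + \bar w$ and the state $x_t = \hat{x}_t + (w_t - \bar w)$.

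\textbf{Step 2 — backward induction.} I would introduce the risk cost-to-go $\rho_t \triangleq \E\{\sum_{s=t+1}^{N}(4\,\hat{x}_s'QWQ\hat{x}_s + 2\,\hat{x}_s'm_3)\mid \F_t\}$; since under the deterministic feedback the closed loop is Markov in the state and the $w$'s are i.i.d.\ and independent of $\F_t$, this is a function of $x_t$ alone, with $\rho_N\equiv 0$ and $\rho_0 = J_R(u^*(\mu))$ (because $x_0$ is deterministic). From $\hat{x}_{t+1}=(A+BK_t)x_t + Bl_t + \bar w$, $\E[(w_{t+1}-\bar w)\mid\F_t]=0$, $\E[(w_{t+1}-\bar w)(w_{t+1}-\bar w)'\mid\F_t]=W$, the Bellman identity $\rho_t = 4\,\hat{x}_{t+1}'QWQ\hat{x}_{t+1} + 2\,\hat{x}_{t+1}'m_3 + \E[\rho_{t+1}\mid\F_t]$ shows by induction that $\rho_t(x)=x'\mathcal{P}_t x + 2\mathcal{Z}_t'x + \mathcal{D}_t$, with $\mathcal{P}_t = (A+BK_t)'(\mathcal{P}_{t+1}+4QWQ)(A+BK_t)$ and analogous affine/constant recursions for $\mathcal{Z}_t,\mathcal{D}_t$, all null at $t=N$ (the constant recursion absorbs the noise term $\Tr((\mathcal{P}_{t+1}+4QWQ)W)$ coming from $\E[\rho_{t+1}\mid\F_t]$). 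The change of variables $P_t \triangleq \mathcal{P}_t+4QWQ$, $\zeta_t \triangleq \mathcal{Z}_t+m_3$, $d_t \triangleq \mathcal{D}_t+\Tr(\mathcal{P}_t W)$ turns these into exactly the recursions and terminal values in the statement; evaluating $\rho_0$ at $x_0$ and undoing the change of variables then gives the claimed closed form for $J_R(u^*(\mu))$.

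\textbf{Step 3 — continuity.} For fixed $N$, the quantities $V_t,K_t,\xi_t,l_t$ of Theorem~\ref{ANA_THM_Optimal_Input} are produced by finitely many iterations of the recursions~\eqref{eq:Riccati_Recursions_Fully}, each of which is a rational, hence continuous, function of $\mu$, since $Q_\mu=Q+4\mu QWQ$ is affine in $\mu$ and $B'V_tB+R$ stays positive definite (thus invertible) throughout. Hence $K_t,l_t$, and then $P_t,\zeta_t,d_t$, and finally the closed-form value of $J_R(u^*(\mu))$, all depend continuously on $\mu\ge0$.

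\textbf{Expected main obstacle.} There is no conceptual difficulty; the only care needed is bookkeeping — keeping straight the one-step index offset between $x_{t-1}$ and $\hat{x}_t$, handling the conditional first and second moments of the noise via the tower property, and spotting the reparameterization ($P_t=\mathcal{P}_t+4QWQ$, $\zeta_t=\mathcal{Z}_t+m_3$, $d_t=\mathcal{D}_t+\Tr(\mathcal{P}_t W)$) that recasts the naive value-function recursion into the stated form.
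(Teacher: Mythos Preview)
Your proposal is correct and follows essentially the same approach the paper indicates: the paper omits this proof and points to Proposition~\ref{prop:LQ_Risk_Evaluation}, whose recursions are in turn derived ``similarly to Theorem~\ref{thm:partially_observed}'', i.e., by backward dynamic programming on a quadratic value function---exactly your Step~2---while continuity is obtained, as you do, from invertibility of $R$ making all iterates rational (hence continuous) in $\mu$. Your explicit reparameterization $P_t=\mathcal{P}_t+4QWQ$, $\zeta_t=\mathcal{Z}_t+m_3$, $d_t=\mathcal{D}_t+\Tr(\mathcal{P}_tW)$ is a clean way to recover the stated recursions and terminal/initial expressions, and the bookkeeping checks out.
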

\begin{proof}
Omitted; it is similar to that of Proposition~\ref{prop:LQ_Risk_Evaluation} in Section~\ref{Section_Partially}. To prove continuity it is sufficient to invoke invertibility of $R$. We don't need Assumption~\ref{ass:moments_partially_observed} since the expressions above are deterministic. As a result, Assumption~\ref{FOR_ASS_noise} suffices. 
\end{proof}

Now, we can obtain the optimal solution $u^*(\mu^*)$ to the original problem~\eqref{eq:LQ_constrained_reformulated} for fully observed systems.

\begin{theorem}[\textbf{Primal-Optimal Solution}]\label{thm:optimal_LQR}
Assume that system~\eqref{FOR_EQN_system} is fully-observed: $y_k=x_k$ for all $k\ge 0$. Let Assumption \ref{FOR_ASS_noise} be in effect, and adopt the notation of Lemma~\ref{lem:Lagrangian_Dynamic_Programming}.
 Define the minimum feasible $\bar{\epsilon}_{\inf}$:
\[
\bar{\epsilon}_{\inf}\triangleq \inf_{u\in\mathcal{U}_0} J_R(u).
\]
Then, for any $\bar{\epsilon}>\bar{\epsilon}_{\inf}$, problem~\eqref{eq:LQ_constrained_reformulated} is feasible and the optimal solution is given by $u^*(\mu^*)$ based on~\eqref{eq:Optimal_Multiplier},~\eqref{eq:LQR_Optimal_Input}.
\end{theorem}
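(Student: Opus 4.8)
The plan is to assemble the statement from the LQR-specific results already established (Theorem~\ref{ANA_THM_Optimal_Input} and Proposition~\ref{prop:LQR_Risk_Evaluation}) together with the abstract duality apparatus of Theorems~\ref{thm:KKT} and~\ref{thm:OPTIMAL}: concretely, I would verify that every hypothesis of Theorem~\ref{thm:OPTIMAL} is met in the fully-observed setting, then read off its conclusion and transport it back to the original problem~\eqref{eq:LQ_constrained} through the equivalence of Proposition~\ref{prop:partially_existence}.

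First I would handle feasibility and Slater's condition at once. Since $\bar{\epsilon} > \bar{\epsilon}_{\inf} = \inf_{u\in\mathcal{U}_0} J_R(u)$, the definition of the infimum furnishes some $u^{\dagger}\in\mathcal{U}_0$ with $J_R(u^{\dagger}) < \bar{\epsilon}$; this single point simultaneously witnesses feasibility of~\eqref{eq:LQ_constrained_reformulated} and establishes Slater's condition in the precise form required by Theorem~\ref{thm:OPTIMAL}.

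Next I would verify the two remaining hypotheses of Theorem~\ref{thm:OPTIMAL}. Existence of a minimizer $u^*(\mu)\in\mathcal{U}_0$ of the Lagrangian in~\eqref{eq:FDUAL} for each fixed $\mu\ge0$, its affine closed form~\eqref{eq:LQR_Optimal_Input}, and its membership in $\LL_4(\F_t)$ are exactly the content of Theorem~\ref{ANA_THM_Optimal_Input}; continuity of $\mu\mapsto J_R(u^*(\mu))$ is the final assertion of Proposition~\ref{prop:LQR_Risk_Evaluation}. (Both rely only on Assumption~\ref{FOR_ASS_noise}, because under full observability the conditional moments $W_{t-1}, m_{3,t-1}, m_{4,t-1}$ collapse to the deterministic constants of Lemma~\ref{lem:Lagrangian_Dynamic_Programming}.) With all three hypotheses in force, Theorem~\ref{thm:OPTIMAL}, part~(2), gives that $\mu^*$ defined by~\eqref{eq:Optimal_Multiplier} is finite and that $u^*(\mu^*)$ solves the primal QCQP~\eqref{eq:LQ_constrained_reformulated}; combining this with the equivalence between~\eqref{eq:LQ_constrained_reformulated} and~\eqref{eq:LQ_constrained} from Proposition~\ref{prop:partially_existence} yields that $u^*(\mu^*)$, evaluated via~\eqref{eq:LQR_Optimal_Input} at the multiplier~\eqref{eq:Optimal_Multiplier}, is optimal for the original risk-constrained LQR problem.

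The one genuinely non-routine point is the finiteness of $\mu^*$: one must know that pushing the risk-aware penalty high enough actually drives $J_R(u^*(\mu))$ below the threshold $\bar{\epsilon}$. This is precisely where the strict inequality $\bar{\epsilon} > \bar{\epsilon}_{\inf}$ is used, and it is the substance of the (appendix) proof of Theorem~\ref{thm:OPTIMAL}, which combines the monotone decrease of $J_R(u^*(\cdot))$ (Theorem~\ref{thm:OPTIMAL}, part~(1)) with the fact that the relaxed optimizer tends, as $\mu\to\infty$, to a minimizer of $J_R$ over $\mathcal{U}_0$. Here it suffices to invoke that theorem; everything else is bookkeeping --- matching Slater's condition to the hypothesis $\bar{\epsilon}>\bar{\epsilon}_{\inf}$ and quoting the closed-form and continuity facts already proved for the LQR case. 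If desired, one could alternatively re-derive the conclusion directly from Theorem~\ref{thm:KKT} by checking its three KKT-type conditions at the pair $(u^*(\mu^*),\mu^*)$, but routing through Theorem~\ref{thm:OPTIMAL} is cleaner.
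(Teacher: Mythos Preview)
Your proposal is correct and mirrors the paper's approach: the paper omits the proof as similar to that of Theorem~\ref{thm:optimal_LQ}, which likewise derives Slater's condition from $\bar{\epsilon}>\bar{\epsilon}_{\inf}$ via the definition of infimum, invokes continuity of $J_R(u^*(\cdot))$ from the risk-evaluation proposition, and then applies Theorems~\ref{thm:KKT} and~\ref{thm:OPTIMAL}. One inessential aside: your parenthetical description of how the appendix proof of Theorem~\ref{thm:OPTIMAL} secures finiteness of $\mu^*$ is slightly off --- it proceeds by a weak-duality contradiction under Slater's condition rather than by a $\mu\to\infty$ limiting argument --- but since you invoke that theorem as a black box, this does not affect the validity of your proof.
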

\begin{proof}
Omitted; it is similar to the proof of Theorem~\ref{thm:optimal_LQ}.
\end{proof}
Note that solving the problem
\[
\inf_{u\in\mathcal{U}_0} J_R(u)=\inf_{u\in\mathcal{U}_0} \E \sum_{k=1}^{N} \hat{x}'_kQWQ\hat{x}_k+2m_3'\hat{x}_k
\]
corresponds to finding a maximally risk-aware policy. Since the risk functional is quadratic, we can solve the problem following standard LQR theory. Note that the problem is singular since there is no input penalty in $J_R$ and the matrix $QWQ$ could be singular. Hence, there might be multiple optimal solutions. Nonetheless, we can still obtain an admissible optimal solution so that the infimum becomes minimum; such a solution will involve pseudo-inverses instead of inverses. More information about singular LQR control can be found in~\cite{lewis1981generalized}.

The above problem characterizes the minimum value of $\epsilon$ such that~\eqref{eq:LQ_constrained} is feasible. Note that as we increase $\epsilon$, we relax the risk-aware requirements. Let $\epsilon_{LQR}\triangleq J_R(u^*(0))$ be the value of the risk functional evaluated at the risk-neutral LQR optimal controller. Then, trivially if $\epsilon$ reaches a large value, i.e. larger than $\epsilon_{LQR}$, then the risk-neutral LQR controller will become feasible. After that point, if we keep increasing $\epsilon$, the risk-neutral LQR controller will always be the optimal solution to~\eqref{eq:LQ_constrained} with $\mu^*=0$. Hence, to obtain risk-aware behaviors, we need to select $\epsilon<\epsilon_{LQR}$.
\section{Optimal Risk-Aware LQ Control}\label{Section_Partially}
In this section we study problem~\eqref{eq:LQ_constrained_reformulated} in its full generality, when we only have access to partial state measurements. 
Fix a Lagrange multiplier $\mu\ge 0$ and recall the definition of Lagrangian $\LAG$ in~\eqref{eq:Lagrangian}.
Before we derive the optimal control law, let us simplify the form of the Lagrangian $\LAG$.
For brevity, denote the information up to time $t$ (extended state) by $z_t=(y_{0:t},u_{0:t-1})$, $z_0=y_0$.  Then, we get the following result.
\begin{lemma}[\textbf{Lagrangian Reformulation}]\label{lem:Lagrangian_partially_reformulated}
Let Assumption~\ref{FOR_ASS_noise} be in effect. Consider the sequential variational QCQP problem~\eqref{eq:LQ_constrained_reformulated} and define the inflated state penalty matrix
\begin{equation}
    Q_{\mu,t} \triangleq Q+4\mu QW_{t}Q.\nonumber
\end{equation}
Then, for every $u_t\in\LL_4(\F_t)$, $t\le N-1$, the Lagrangian function $\LAG$ can be expressed as
	\begin{equation}\label{eq:Lagrangian_partially_reformulated}
 \LAG(u,\mu)=\E \set{\sum_{t=0}^{N-1} g_{t}(z_t,u_t,\mu)}+g(\mu),
	\end{equation}
	where 
\begin{align*}
    g_{t}(z_t,u_t,\mu)=&\hat{x}'_{t+1}Q_{\mu,t}\hat{x}_{t+1}+2\mu\hat{x}'_{t+1}m_{3,t}+u'_{t}Ru_{t}\\
    g(\mu)=&-\mu\bar{\epsilon}+\E\sum_{t=0}^{N-1}\Tr(QW_t)+x'_0Qx_0.
\end{align*}
\end{lemma}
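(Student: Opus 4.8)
The plan is to start from the defining identity $\LAG(u,\mu)=J(u)+\mu J_R(u)-\mu\bar\epsilon$ in~\eqref{eq:Lagrangian} and the reformulation already obtained in Proposition~\ref{prop:partially_existence}, and then rewrite only the risk-neutral part $J(u)$ in terms of the predictions $\hat x_t$ rather than the states $x_t$; the risk part $J_R(u)$ is already in $\hat x$-form there. The key algebraic ingredient is the orthogonal decomposition $x_t=\hat x_t+\delta_t$, which yields $x_t'Qx_t=\hat x_t'Q\hat x_t+2\hat x_t'Q\delta_t+\delta_t'Q\delta_t$. Since Proposition~\ref{prop:partially_existence} already guarantees $x_t,\hat x_t,\delta_t\in\LL_4$, every term is integrable and expectations may be taken termwise. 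Using that $\hat x_t$ is $\F_{t-1}$-measurable together with $\E(\delta_t\mid\F_{t-1})=0$, the cross term vanishes in expectation, $\E(\hat x_t'Q\delta_t)=\E[\hat x_t'Q\,\E(\delta_t\mid\F_{t-1})]=0$; conditioning the last term gives $\E(\delta_t'Q\delta_t)=\E[\Tr(Q\,\E(\delta_t\delta_t'\mid\F_{t-1}))]=\E\Tr(QW_{t-1})$. Hence $\E(x_t'Qx_t)=\E(\hat x_t'Q\hat x_t)+\E\Tr(QW_{t-1})$ for $t\ge 1$, while for $t=0$ the state is deterministic, $\hat x_0=x_0$, $\delta_0=0$ and $W_{-1}=0$, so $x_0'Qx_0=\hat x_0'Q\hat x_0$.

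Next I would substitute this into $J(u)$, peel off the deterministic $x_0'Qx_0$ term, and shift the index of the sum $\sum_{t=1}^N$ down to $\sum_{t=0}^{N-1}$, obtaining $J(u)=x_0'Qx_0+\sum_{t=0}^{N-1}\E[\hat x_{t+1}'Q\hat x_{t+1}+u_t'Ru_t]+\E\sum_{t=0}^{N-1}\Tr(QW_t)$. Applying the same index shift to $J_R$ as written in Proposition~\ref{prop:partially_existence} gives $\mu J_R(u)=\mu\E\sum_{t=0}^{N-1}[4\hat x_{t+1}'QW_tQ\hat x_{t+1}+2\hat x_{t+1}'m_{3,t}]$. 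Adding the three pieces of $\LAG$ and using the definition $Q_{\mu,t}=Q+4\mu QW_tQ$ to merge $\hat x_{t+1}'Q\hat x_{t+1}+4\mu\hat x_{t+1}'QW_tQ\hat x_{t+1}=\hat x_{t+1}'Q_{\mu,t}\hat x_{t+1}$ collapses the running cost into exactly $\sum_{t=0}^{N-1}\E\,g_t(z_t,u_t,\mu)$, while the leftover constants $x_0'Qx_0+\E\sum_{t=0}^{N-1}\Tr(QW_t)-\mu\bar\epsilon$ are precisely $g(\mu)$, as claimed.

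The one nonalgebraic point I would address is that $g_t$ is genuinely a function of $(z_t,u_t)$ alone: here I would note $\hat x_{t+1}=A\hat x_{t\mid t}+Bu_t+\bar w$ with $\hat x_{t\mid t}=\E(x_t\mid\F_t)$ a function of $z_t=(y_{0:t},u_{0:t-1})$, and that the conditional moments $W_t,m_{3,t}$ are built from $\delta_{t+1}=A(x_t-\hat x_{t\mid t})+(w_{t+1}-\bar w)$, in which the $Bu_t$ contributions cancel, so they too depend on $z_t$ but not on $u_t$. I do not anticipate any real obstacle: the result is essentially bookkeeping, and the only care needed is (i) that all the rearrangements of expectations and the splitting/reindexing of sums are licensed by the $\LL_4$ integrability already secured in Proposition~\ref{prop:partially_existence}, and (ii) the boundary handling at $t=0$, where $\delta_0=0$ and $W_{-1}=0$ so no spurious trace term appears.
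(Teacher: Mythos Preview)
Your proposal is correct and follows essentially the same approach as the paper: the paper's proof simply invokes Proposition~\ref{prop:partially_existence} together with the identity $\E x_t'Qx_t=\E\hat x_t'Q\hat x_t+\E\Tr(QW_{t-1})$, which is exactly the orthogonal decomposition you spell out. Your additional remarks on the boundary term at $t=0$ and on why $g_t$ depends only on $(z_t,u_t)$ are sound elaborations that the paper leaves implicit.
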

\begin{proof}
It follows from Proposition~\ref{prop:partially_existence} and 
\begin{align*}
    \E x'_tQx_t&=\E \set{\E (x'_tQx_t|\F_{t-1})}\\
    &=\E \hat{x}'_{t}Q\hat{x}_{t}+\E \set{\E (\delta'_tQ\delta_t|\F_{t-1})}\\
    &=\E \hat{x}'_{t}Q\hat{x}_{t}+\E\Tr(QW_{t-1}).\qedhere
\end{align*}
\end{proof}
Following the same convention as in the fully-observed case,
we define the \textit{optimal cost-to-go} at stage $k\le N-1$ as
\begin{equation*}
\LAG^*_k(z_{k},\mu)\triangleq \inf_{u_{k:N-1}\in \mathcal{U}_k} \E\set{\sum_{t=k}^{N-1}g_{t}(z_t,u_t,\mu)\Bigg|\F_{k}}.
\end{equation*}
where we omit the constant components of the Lagrangian. 

We may now derive a closed-form solution to \eqref{eq:dual}, which provides optimal risk-aware control policies for every fixed multiplier $\mu\ge0$. 
The above formulation in Lemma~\ref{lem:Lagrangian_partially_reformulated} only requires the noise signals to satisfy Assumption~\ref{FOR_ASS_noise}. However, to guarantee that our closed-form optimal controller below is well-defined and has finite fourth moments, we need the following sufficient stricter assumption, which strengthens Assumption~\ref{FOR_ASS_noise}.
\begin{assumption}[\textbf{Strict noise regularity}]\label{ass:moments_partially_observed}
The process noise $w_t$ has finite moments $\E \snorm{w_t}_2^p<\infty$, for any order $p\ge 1$ and any time $t\in\mathbb{N}$. 
\end{assumption}
\begin{theorem}[\textbf{LQ Risk-Aware Controllers}]\label{thm:partially_observed}
Let Assumption~\ref{ass:moments_partially_observed} be in effect. Fix a multiplier $\mu\ge 0$ and adopt the notation of Lemma~\ref{lem:Lagrangian_partially_reformulated}. Recall the definition of the refinement error $e_t$ in~\eqref{eq:estimation_error}. Then, the optimal cost-to-go at time $t$ is given by:
\begin{align}\label{eq:optimal_cost_to_go}
    \LAG^*_t(z_t,\mu)&=\hat{x}'_{t|t}P_t\hat{x}_{t|t}+2\zeta'_t\hat{x}_{t|t}+c_t,
\end{align}
where $P_t,\zeta_t,c_t$ are $\F_{t}$-measurable, given by the recursions:
\begin{align}
    V_t&=\E(P_{t+1}|\F_t)+Q_{\mu,t} \label{eq:aux_cost_to_go_V}\\
    \xi_{t}&=\E(P_{t+1}e_{t+1}+\zeta_{t+1}|\F_t)+\mu m_{3,t}\label{eq:aux_cost_to_go_xi}\\
    d_t&=\E\left\{c_{t+1}+e_{t+1}'P_{t+1}e_{t+1}+2e_{t+1}'\zeta_{t+1}|\F_t\right\}\label{eq:aux_cost_to_go_d}\\
    K_{t}&=-(B'V_tB+R)^{-1}B'V_{t}A\\
    l_t&=-(B'V_tB+R)^{-1}B'(\xi_t+V_t\bar{w})\\
    P_{t}&=(A+BK_{t})'V_t(A+BK_t)+K'_{t}RK_{t}\label{eq:aux_cost_P}\\
    \zeta_t&=(A+BK_t)'(\xi_t+V_t \bar{w}) \\
    c_t&=d_t+\bar{w}'V_t \bar{w}-l'_t(B'V_tB+R)l_t+2\xi_t'\bar{w}
\end{align}
with initial values $P_N=0,\zeta_N=0,c_N=0$. 
Additionally, an optimal control policy 
that achieves the dual value in \eqref{eq:FDUAL} may be expressed as:
\begin{equation}\label{eq:LQ_Optimal_Input}
    u^*_t(\mu)=K_t\hat{x}_{t|t}+l_t\in \LL_4(\F_{t}).
\end{equation}
\end{theorem}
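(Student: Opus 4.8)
The plan is to evaluate the dual value in \eqref{eq:FDUAL} by backward dynamic programming on the time index, using the mmse estimate $\hat{x}_{t|t}$ as a sufficient statistic and verifying the quadratic ansatz \eqref{eq:optimal_cost_to_go} by induction from $t=N$ down to $t=0$. Two structural facts drive the computation: the one-step predictor obeys $\hat{x}_{t+1}=A\hat{x}_{t|t}+Bu_t+\bar{w}$ and is $\F_t$-measurable, whereas $e_{t+1}$, $P_{t+1}$, $\zeta_{t+1}$ and $c_{t+1}$ are only $\F_{t+1}$-measurable; and $\E(e_{t+1}\mid\F_t)=0$, so that $\hat{x}_{t+1|t+1}=\hat{x}_{t+1}+e_{t+1}$ splits an $\F_t$-measurable part off a conditionally centered one. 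The base case $t=N$ is immediate: the empty sum gives $\LAG^*_N\equiv 0$, hence $P_N=\zeta_N=c_N=0$.

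For the inductive step, assume \eqref{eq:optimal_cost_to_go} at time $t+1$. I would first record the Bellman recursion $\LAG^*_t(z_t,\mu)=\inf_{u_t\in\LL_4(\F_t)}\E\{g_t(z_t,u_t,\mu)+\LAG^*_{t+1}(z_{t+1},\mu)\mid\F_t\}$; the only nontrivial direction is ``$\le$'', a measurable-selection argument that is painless here since the stage-wise minimizer below is an explicit affine function of $\hat{x}_{t|t}$. Substituting the ansatz, writing $\hat{x}_{t+1|t+1}=\hat{x}_{t+1}+e_{t+1}$, pulling the $\F_t$-measurable factor $\hat{x}_{t+1}$ out of the conditional expectation, and using $\E(e_{t+1}\mid\F_t)=0$ with the definitions \eqref{eq:aux_cost_to_go_V}--\eqref{eq:aux_cost_to_go_d} of $V_t$, $\xi_t$ and $d_t$, the conditional expectation collapses to $\hat{x}_{t+1}'V_t\hat{x}_{t+1}+2\xi_t'\hat{x}_{t+1}+d_t+u_t'Ru_t$ with $\F_t$-measurable coefficients. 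Plugging in $\hat{x}_{t+1}=A\hat{x}_{t|t}+Bu_t+\bar{w}$ gives a quadratic in $u_t$ that is strictly convex (the induction also keeps $V_t\succeq0$, so $B'V_tB+R\succ0$ given $R\succ0$); its minimizer, unique up to null sets, is $u^*_t(\mu)=K_t\hat{x}_{t|t}+l_t$ with $K_t,l_t$ as stated, and back-substitution plus completing the square, using $B'(\xi_t+V_t\bar{w})=-(B'V_tB+R)l_t$, reproduce exactly the recursion \eqref{eq:aux_cost_P} for $P_t$ and the stated updates for $\zeta_t$ and $c_t$. This closes the induction, and since the exhibited policy is the pointwise minimizer at every stage it attains the dual value in \eqref{eq:FDUAL}, provided it is admissible.

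The hard part---and the reason the stronger Assumption~\ref{ass:moments_partially_observed} is invoked here rather than merely Assumption~\ref{FOR_ASS_noise}---is to certify that every quantity produced by the recursion is well defined (in particular that the conditional expectations \eqref{eq:aux_cost_to_go_V}--\eqref{eq:aux_cost_to_go_d} are integrable) and that $u^*_t(\mu)\in\LL_4(\F_t)$. In the partially-observed, non-Gaussian case the conditional moments $W_t=\E(\delta_{t+1}\delta_{t+1}'\mid\F_t)$ and $m_{3,t}$ are genuinely random functions of the observation history, so $Q_{\mu,t}$, $V_t$, $\xi_t$, and therefore $K_t$ and $l_t$, are unbounded random matrices and vectors---the gain weight $(B'V_tB+R)^{-1}$ is uniformly bounded by $R^{-1}$, but $V_t$ is not---so $u^*_t=K_t\hat{x}_{t|t}+l_t$ is a product of possibly unbounded random variables whose fourth moment requires $\LL_p$ control with $p>4$ on the factors. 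To handle this I would run a joint induction showing that $x_t$, $\hat{x}_{t|t}$, $\tilde{x}_t\triangleq x_t-\hat{x}_{t|t}$, $\delta_t$, $e_t$ and all the backward iterates $V_t,\xi_t,d_t,P_t,\zeta_t,c_t,K_t,l_t$ lie in $\bigcap_{p\ge1}\LL_p$: via $x_{t+1}=Ax_t+Bu^*_t+w_{t+1}$, $\delta_{t+1}=A\tilde{x}_t+(w_{t+1}-\bar{w})$ and the closed-loop law $u^*_t=K_t\hat{x}_{t|t}+l_t$, every such object is built from the noise increments (in every $\LL_p$ under Assumption~\ref{ass:moments_partially_observed}), from conditional expectations (which are $\LL_p$-nonexpansive by Jensen, hence never destroy integrability), and from the uniformly bounded inverses $(B'V_\cdot B+R)^{-1}$, and Hölder's inequality then propagates $\LL_p$ membership through the finitely many products. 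Once all factors lie in every $\LL_p$---in particular $u^*_t(\mu)\in\LL_4(\F_t)$---the policy is admissible, the Bellman manipulations above are rigorous, and the proof is complete.
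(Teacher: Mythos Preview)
Your proposal is correct and follows essentially the same route as the paper: backward dynamic programming via the Bellman recursion, the split $\hat{x}_{t+1|t+1}=\hat{x}_{t+1}+e_{t+1}$ with $\E(e_{t+1}\mid\F_t)=0$, pointwise minimization of the resulting strictly convex quadratic in $u_t$, and the $\bigcap_{p\ge1}\LL_p$ integrability bookkeeping under Assumption~\ref{ass:moments_partially_observed}. The one point the paper makes explicit that you leave implicit is that the backward iterates $V_t,\xi_t,d_t$ (hence $K_t,l_t$) are \emph{pointwise independent of the input policy}: the paper proves this up front via a decomposition $x_t=x^s_t+x^d_t$ into an autonomously evolving stochastic part and an input-driven deterministic part, so that $\delta_t,e_t$ depend only on $x^s_t$ and therefore $\partial V_t/\partial u_t=0$ everywhere---this is precisely what justifies treating the quadratic coefficients as frozen when you minimize over $u_t\in\LL_4(\F_t)$. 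Your observation $\delta_{t+1}=A\tilde{x}_t+(w_{t+1}-\bar{w})$ in the integrability paragraph is the seed of that argument, but you never connect it back to the minimization step.
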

The proof can be found in the Appendix.
Note that for the well-posedness of the solution, we only need i) the conditional moments in~\eqref{eq:aux_cost_to_go_V}-\eqref{eq:aux_cost_to_go_d} to be well-defined, and ii) fourth-moment integrability of the optimal inputs in~\eqref{eq:LQ_Optimal_Input}.
Assumption~\ref{ass:moments_partially_observed} is only a sufficient condition so that the above conditions are satisfied. 
It might not be a necessary condition. For example, in the fully-observed case  (Section~\ref{Section_Fully_Observed}), the milder Assumption~\ref{FOR_ASS_noise} suffices. Here, the technical difficulty stems from the fact that in the general case, $K_t$ is stochastic and potentially unbounded. 
We leave further discussion for future work.

Interestingly, in the partially observed case the control law is still affine-like. However, the linear and affine terms are no longer constants. They evolve based on a new filtered version of the Riccati difference equation, see~\eqref{eq:aux_cost_to_go_V},~\eqref{eq:aux_cost_P}. They are random variables that depend exclusively on the stochastic dynamics (noises) of the system.  The intuition behind the linear gain and the affine term is similar to the fully-observed case, however, there is a major difference. Instead of accounting only for the process noise $w_k$, we account for the whole prediction error $x_k-\hat{x}_k$, which captures also the estimation uncertainty.
The control policy and the estimation process are intertwined, in the sense that the latter affects the gain and the affine part. Hence, the control policy cannot be designed independently of the estimation process/noise statistics. In other words, the certainty equivalence property (see~\cite{Speyer2008STOCHASTIC}) does not hold. 
However, separation holds weakly in the sense that the optimal state estimator~$\hat{x}_{t|t}$ in~\eqref{eq:LQ_Optimal_Input} is the minimum mean-square error (mmse) estimator and can be designed separately from the optimal controller.

\subsection{Recovery of Primal-Optimal Solutions}
In this subsection, we provide a closed-form expression to evaluate the risk functional $J_R(u^*(\mu))$. Moreover, we show that $J_R(u^*(\cdot))$ is a continuous function of $\mu$. Similar to the fully-observed case, if Slater's condition is satisfied, then we can find the optimal multiplier $\mu^*$ by trivially applying bisection.
\begin{proposition}[\textbf{Risk Functional Evaluation}]\label{prop:LQ_Risk_Evaluation} Let Assumption \ref{ass:moments_partially_observed} be in effect, and adopt the notation of Lemma~\ref{lem:Lagrangian_partially_reformulated}. Recall the definition of the refinement error $e_t$ in~\eqref{eq:estimation_error}.
For fixed $\mu\ge0$, consider the control policy $u^*(\mu)$, as defined in~\eqref{eq:LQ_Optimal_Input}.
With terminal values $H_{N-1}=4QW_{N-1}Q$, $f_{N-1}=m_{3,N-1}$, $g_N=0$, consider the backward recursions
\begin{align*}
&\Theta_{t}=(A+BK_{t})'H_{t}(A+BK_{t})\\
&H_{t-1}=\E(\Theta_{t}|\F_{t-1})+4QW_{t-1}Q\nonumber\\
&\eta_t=(A+BK_{t})'(f_t+H_t(Bl_t+\bar{w}))\\
&f_{t-1}=\E(\eta_t+\Theta_{t}e_{t}|\F_{t-1})+m_{3,t-1}\nonumber\\
&\gamma_t=g_t+(Bl_t+\bar{w})'H_t(Bl_t+\bar{w})+2(Bl_t+\bar{w})'f_t\\
&g_{t-1}=\E(\gamma_t+e_t'\Theta_{t}e_{t}+2e'_{t}\eta_t|\F_{t-1})
\end{align*}
Then, the risk constraint in problem \eqref{eq:LQ_constrained_reformulated} may be evaluated by
\begin{equation}
J_R(u^*(\mu))=x'_0\E(\Theta_0)x_0+2\E(\eta'_0) x_0+\E(\gamma_0).\nonumber
\end{equation}
Moreover, the risk functional $J_R(u^*(\mu))$ is a continuous function of $\mu$.
\end{proposition}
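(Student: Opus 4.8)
The plan is to propagate the quadratic integrand of the reformulated risk functional of Proposition~\ref{prop:partially_existence}, namely $J_R(u)=\E\set{\sum_{t=1}^{N}\paren{4\hat{x}'_{t}QW_{t-1}Q\hat{x}_{t}+2\hat{x}'_{t}m_{3,t-1}}}$, backward in time along the closed loop induced by the \emph{fixed} optimal policy $u^*_t(\mu)=K_t\hat{x}_{t|t}+l_t$ from~\eqref{eq:LQ_Optimal_Input}. This is structurally the same as the cost-to-go recursion in the proof of Theorem~\ref{thm:partially_observed}, except that there is no minimization step, since the control law is frozen. Writing $F_t\triangleq A+BK_t$ and $b_t\triangleq Bl_t+\bar{w}$, along the closed loop one has $\hat{x}_{t+1}=F_t\hat{x}_{t|t}+b_t$ and $\hat{x}_{t+1|t+1}=\hat{x}_{t+1}+e_{t+1}$ by~\eqref{eq:estimation_error}; under Assumption~\ref{ass:moments_partially_observed} the moment bounds already established for Theorem~\ref{thm:partially_observed} make every conditional expectation below well-defined. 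Introduce the $\F_k$-measurable \emph{risk cost-to-go} $\Psi_k\triangleq\E\set{\sum_{s=k+1}^{N}\paren{4\hat{x}'_{s}QW_{s-1}Q\hat{x}_{s}+2\hat{x}'_{s}m_{3,s-1}}|\F_k}$, $0\le k\le N-1$, so that $J_R(u^*(\mu))=\E\Psi_0$, and prove by backward induction on $k$ that $\Psi_k=\hat{x}'_{k|k}\Theta_k\hat{x}_{k|k}+2\eta'_k\hat{x}_{k|k}+\gamma_k$, with $\Theta_k,\eta_k,\gamma_k$ the $\F_k$-measurable quantities generated by the stated recursions.

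For $k=N-1$ the sum reduces to $4\hat{x}'_{N}QW_{N-1}Q\hat{x}_{N}+2\hat{x}'_{N}m_{3,N-1}$; substituting $\hat{x}_N=F_{N-1}\hat{x}_{N-1|N-1}+b_{N-1}$ and matching powers of $\hat{x}_{N-1|N-1}$ reproduces the terminal data $H_{N-1}=4QW_{N-1}Q$, $f_{N-1}=m_{3,N-1}$, $g_N=0$ and the asserted forms of $\Theta_{N-1},\eta_{N-1},\gamma_{N-1}$. For the inductive step I would split $\Psi_k$ into the $s=k+1$ term plus $\E\set{\Psi_{k+1}|\F_k}$ (tower property), insert $\hat{x}_{k+1|k+1}=\hat{x}_{k+1}+e_{k+1}$ into the inductive quadratic, and use that $\hat{x}_{k+1}$ is $\F_k$-measurable (so it factors out of $\E\set{\cdot|\F_k}$) whereas $\Theta_{k+1},\eta_{k+1}$ are only $\F_{k+1}$-measurable. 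The one genuinely non-routine point is that, unlike the fully-observed and Gaussian cases, the cross-terms do \emph{not} drop: one must retain $\E\set{\Theta_{k+1}e_{k+1}|\F_k}$ in the linear coefficient and $\E\set{e'_{k+1}\Theta_{k+1}e_{k+1}+2e'_{k+1}\eta_{k+1}|\F_k}$ in the constant, and these are precisely the conditional-expectation contributions appearing in the recursions for $f_k$ and $g_k$ (while $H_k$ carries $\E\set{\Theta_{k+1}|\F_k}$ rather than $\Theta_{k+1}$). Collecting terms gives a quadratic in $\hat{x}_{k+1}$ with coefficients $H_k,f_k,g_k$, and substituting $\hat{x}_{k+1}=F_k\hat{x}_{k|k}+b_k$ yields $\Theta_k=F'_kH_kF_k$, $\eta_k=F'_k(f_k+H_kb_k)$, $\gamma_k=b'_kH_kb_k+2b'_kf_k+g_k$, as stated. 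Setting $k=0$ and using that $x_0$ is deterministic, so $\hat{x}_{0|0}=\E(x_0|\F_0)=x_0$, gives $J_R(u^*(\mu))=x'_0\E(\Theta_0)x_0+2\E(\eta'_0)x_0+\E(\gamma_0)$.

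For continuity of $\mu\mapsto J_R(u^*(\mu))$, the plan is a second backward induction establishing that, for almost every $\omega$, every quantity occurring in Theorem~\ref{thm:partially_observed} and in the recursions above is a continuous function of $\mu$ on $[0,\infty)$: the moments $W_t,m_{3,t}$ are $\mu$-free, $Q_{\mu,t}=Q+4\mu QW_tQ$ is affine in $\mu$, matrix arithmetic is continuous, and $(B'V_tB+R)^{-1}$ depends continuously on its entries because $B'V_tB+R\succeq R\succ 0$ (invertibility of $R$ is already needed for the recursions of Theorem~\ref{thm:partially_observed}, and together with $Q\succeq0$ it gives $V_t\succeq0$ by induction). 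The only delicate commutation is passing a $\mu$-limit through the conditional expectations $\E\set{\cdot|\F_t}$, which I would justify by conditional dominated convergence; the required domination, uniform over $\mu$ in compact intervals, is supplied by Assumption~\ref{ass:moments_partially_observed}, since every quantity is a polynomial in $\set{w_s}$ and $\set{e_s}$ with coefficients that are rational in $\mu$ whose denominators are bounded below by a fixed power of $\det R>0$ independently of $\omega$. A final dominated-convergence argument applied to $x'_0\E(\Theta_0)x_0+2\E(\eta'_0)x_0+\E(\gamma_0)$ then gives the claim. I expect this last domination step to be the main obstacle: in the partially-observed case $K_t$, and hence $\Theta_t,\eta_t,\gamma_t$, are genuinely stochastic and a priori unbounded, so the boundedness shortcuts available in the fully-observed setting fail, which is exactly why the stronger Assumption~\ref{ass:moments_partially_observed} replaces Assumption~\ref{FOR_ASS_noise} here.
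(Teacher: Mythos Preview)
Your proposal is correct and follows essentially the same approach as the paper. For the recursive expressions the paper simply says the proof ``is similar to the proof of Theorem~\ref{thm:partially_observed}'', and your backward induction on the risk cost-to-go $\Psi_k$ along the frozen closed loop is exactly that argument, including the key observation that the cross-terms $\E\{\Theta_{k+1}e_{k+1}|\F_k\}$ and $\E\{e'_{k+1}\Theta_{k+1}e_{k+1}+2e'_{k+1}\eta_{k+1}|\F_k\}$ must be retained. For continuity the paper also argues by successive conditional dominated convergence, using invertibility of $R$ to obtain bounds of the type $\snorm{K_{N-1}(\mu_s)}_2\le O(\snorm{W_{N-1}}_2+1)$ uniformly in the sequence $\mu_s\to\mu$, and then Assumption~\ref{ass:moments_partially_observed} to guarantee integrability of the dominating polynomials; your plan is the same, though the paper is slightly more explicit in writing out the dominating functions at the first step of the induction.
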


Finally, for completeness we state the following theorem which completely characterizes the solution to the primal problem~\eqref{eq:LQ_constrained_reformulated}.
\begin{theorem}[\textbf{Primal-Optimal Solution}]\label{thm:optimal_LQ}
Let Assumption \ref{ass:moments_partially_observed} be in effect, and adopt the notation of Lemma~\ref{lem:Lagrangian_partially_reformulated}.
 Define the minimum feasible $\bar{\epsilon}_{\inf}$:
\[
\bar{\epsilon}_{\inf}\triangleq \inf_{u\in\mathcal{U}_0} J_R(u).
\]
Then, for any $\bar{\epsilon}>\bar{\epsilon}_{\inf}$, problem~\eqref{eq:LQ_constrained_reformulated} is feasible and the optimal solution is given by $u^*(\mu^*)$ based on~\eqref{eq:Optimal_Multiplier},~\eqref{eq:LQ_Optimal_Input}.
\end{theorem}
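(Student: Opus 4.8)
The plan is to assemble the pieces already established---the closed-form relaxed-optimal controller of Theorem~\ref{thm:partially_observed}, the continuity of the risk functional from Proposition~\ref{prop:LQ_Risk_Evaluation}, and the bisection mechanism of Theorem~\ref{thm:OPTIMAL}---after first observing that the hypothesis $\bar{\epsilon}>\bar{\epsilon}_{\inf}$ is exactly what forces Slater's condition.

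\textbf{Step 1 (feasibility and Slater's condition).} First I would note that $\mathcal{U}_0\neq\emptyset$: for instance $u\equiv 0$ is admissible, being deterministic (hence in every $\LL_4(\F_t)$) and consistent with the dynamics, so $\bar{\epsilon}_{\inf}\in[-\infty,\infty)$ is well defined. By the very definition of the infimum, $\bar{\epsilon}>\bar{\epsilon}_{\inf}$ yields a policy $u^{\dagger}\in\mathcal{U}_0$ with $J_R(u^{\dagger})<\bar{\epsilon}$. In particular the reformulated QCQP~\eqref{eq:LQ_constrained_reformulated} is feasible, and it satisfies Slater's condition in precisely the form invoked by Theorem~\ref{thm:OPTIMAL}.

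\textbf{Step 2 (verifying the hypotheses of Theorem~\ref{thm:OPTIMAL}).} Theorem~\ref{thm:OPTIMAL} requires three ingredients. (i) For each $\mu\ge 0$ the infimum defining the dual value~\eqref{eq:FDUAL} is attained by some $u^*(\mu)\in\mathcal{U}_0$: this is exactly Theorem~\ref{thm:partially_observed}, whose closed-form policy~\eqref{eq:LQ_Optimal_Input} lies in $\prod_{t}\LL_4(\F_t)$ under Assumption~\ref{ass:moments_partially_observed}, which is in force. (ii) The map $\mu\mapsto J_R(u^*(\mu))$ is continuous: this is the last assertion of Proposition~\ref{prop:LQ_Risk_Evaluation}. (iii) Slater's condition: established in Step~1. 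Hence Theorem~\ref{thm:OPTIMAL} applies, the multiplier $\mu^*$ defined in~\eqref{eq:Optimal_Multiplier} is finite, and $u^*(\mu^*)$ is optimal for the primal problem~\eqref{eq:LQ_constrained_reformulated}. Since by Proposition~\ref{prop:partially_existence} the QCQP~\eqref{eq:LQ_constrained_reformulated} is equivalent to the original risk-constrained problem~\eqref{eq:LQ_constrained}, the same $u^*(\mu^*)$ solves~\eqref{eq:LQ_constrained}; if desired, Theorem~\ref{thm:KKT}, applied to the pair $(u^*(\mu^*),\mu^*)$ (feasibility by Step~1, complementary slackness by continuity together with the definition of $\mu^*$), additionally certifies zero duality gap.

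\textbf{Main obstacle.} Because the analytic content sits in Theorems~\ref{thm:partially_observed},~\ref{thm:OPTIMAL} and Proposition~\ref{prop:LQ_Risk_Evaluation}, the remaining proof is essentially a short assembly, and the only points demanding care are: that the closed-form $u^*(\mu)$ genuinely remains in $\mathcal{U}_0$ for \emph{every} $\mu\ge 0$ (the fourth-moment integrability, which is exactly where Assumption~\ref{ass:moments_partially_observed} is used rather than merely Assumption~\ref{FOR_ASS_noise}, since here $K_t$ is stochastic and possibly unbounded), and that Slater's strict inequality follows from $\bar{\epsilon}>\bar{\epsilon}_{\inf}$ without requiring the infimum $\bar{\epsilon}_{\inf}$ to be attained. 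Neither obstruction is serious, which is why the statement can be proved along the same lines as the fully-observed counterpart.
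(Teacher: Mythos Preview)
Your proposal is correct and follows essentially the same route as the paper: establish Slater's condition directly from $\bar{\epsilon}>\bar{\epsilon}_{\inf}$ via the definition of infimum, then invoke continuity of $J_R(u^*(\cdot))$ from Proposition~\ref{prop:LQ_Risk_Evaluation} together with the existence of $u^*(\mu)$ from Theorem~\ref{thm:partially_observed} to apply Theorem~\ref{thm:OPTIMAL}. Your write-up is more detailed in checking the hypotheses one by one, but the argument is the same.
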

\begin{proof}
It follows from Proposition~\ref{prop:LQ_Risk_Evaluation} that $J_R(u^*(\cdot))$ is continuous. Hence, if Slater's condition is satisfied, optimality follows from Theorems~\ref{thm:KKT},~\ref{thm:OPTIMAL}. We only need to show that Slater's condition is satisfied under $\bar{\epsilon}>\bar{\epsilon}_{\inf}$. This follows from the definition of infimum; there exists $u^{\dagger}\in\mathcal{U}_0$ such that $J_R(u^{\dagger})\le \bar{\epsilon}_{\inf}+(\bar{\epsilon}-\bar{\epsilon}_{\inf})/2<\bar{\epsilon}$.
\end{proof}

Although the recursions of Theorem~\ref{thm:partially_observed} and Proposition~\ref{prop:LQ_Risk_Evaluation} provide a closed-form solution, they require knowledge of several conditional moments. In reality, these conditional moments might be hard to track. 
Another side effect of the stochasticity of $K_t$ is that the result of Corollary~\ref{cor:stability} will no longer apply for general partially-observed systems; the gains will not converge pointwise in general. As a consequence, any stability analysis of~\eqref{eq:LQ_Optimal_Input} might require further assumptions, beyond the scope of this paper. Nonetheless, in the case of Gaussian noise we can solve both challenges; we can compute these moments exactly and we can prove stability under certain controllability/observability conditions.

\section{Optimal Risk-Aware LQG Control}\label{Section_Gaussian}
In the special case of Gaussian measurement and process noise, the innovation error $\delta_t$ is actually independent of the past $\F_{t-1}$. Therefore, the moments defined in Proposition~\ref{prop:partially_existence} are deterministic, and the recursive formulas for the control policy and the risk-evaluation can be simplified dramatically. In this section, we focus on exactly this case and provide explicit formulas for optimal risk-aware LQG controllers.
\begin{assumption}[\textbf{Gaussian Noise}]\label{ass:gaussian}
The process noise and measurement noise $w_k,\,v_k$ are jointly i.i.d. Gaussian with mean $\bar{w},\,0$ respectively and covariance
\[
\E\matr{{c}w_k\\v_k}\matr{{c}w_k\\v_k}'=\matr{{cc}W&0\\0&S}.
\]
\end{assumption}
\vspace{1bp}
\begin{theorem}[\textbf{LQG Risk-Aware Controllers}]\label{thm:gaussian}
Let Assumption~\ref{ass:gaussian} be in effect. Fix a multiplier $\mu\ge 0$ and consider the notation of Theorem~\ref{thm:partially_observed}. The innovation sequence $\delta_t$, $t\ge 0$ is Gaussian and independent with covariance given by the forward recursion (Kalman Filter)
\begin{align*}
    W_{t+1}&=AW_tA'+W-AW_tC'(CW_tC'+S)^{-1}CW_tA'
\end{align*}
for $W_0=0$. The mean square estimate $\hat{x}_{t|t}$ is given by
\[
\hat{x}_{t|t}=\hat{x}_{t}+W_tC'(CW_tC'+S)^{-1}(y_t-\hat{x}_{t}).
\]
Consider the backward recursion
\begin{align}
    V_{t-1}&=(A+BK_{t})'V_t(A+BK_t)+K'_{t}RK_{t}+Q_{\mu,t-1}\\
    \xi_{t-1}&=(A+BK_t)'(\xi_t+V_t\bar{w})\\
    K_{t}&=-(B'V_tB+R)^{-1}B'V_{t}A\\
    l_t&=-(B'V_tB+R)^{-1}B'\xi_t
\end{align}
with initial values $V_{N-1}=Q_{\mu,N-1}$, $\xi_{N-1}=0$. 
An optimal control policy 
that achieves the dual value in \eqref{eq:FDUAL} may be expressed as
\begin{equation}\label{eq:LQG_optimal_input}
    u^*_t(\mu)=K_t\hat{x}_{t|t}+l_t.
\end{equation}
\end{theorem}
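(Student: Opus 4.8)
The plan is to specialize Theorem~\ref{thm:partially_observed} to the Gaussian case, where the main simplification is that all the conditional moments collapse to deterministic quantities. First I would invoke Assumption~\ref{ass:gaussian} to establish the structural facts about the Kalman filter: since $(w_k,v_k)$ are jointly Gaussian and i.i.d., the innovation error $\delta_t = x_t - \hat{x}_t$ is Gaussian and independent of $\F_{t-1}$, and the conditional covariance $W_{t-1} = \E(\delta_t\delta_t'|\F_{t-1})$ equals its unconditional counterpart, which satisfies the standard Kalman filter Riccati recursion with $W_0 = 0$ (recall $x_0$ is deterministic). The mean-square estimate update $\hat{x}_{t|t} = \hat{x}_t + W_t C'(CW_tC'+S)^{-1}(y_t - \hat{x}_t)$ is the classical Kalman gain formula, obtained from the joint Gaussianity of $(x_t, y_t)$ given $\F_{t-1}$. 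These facts are standard (e.g.~\cite{anderson2005optimal}) and I would cite them rather than re-derive. The key consequence is that $W_{t-1}$, $m_{3,t-1}$, and $m_{4,t-1}$ from Proposition~\ref{prop:partially_existence} are all deterministic constants (in fact $m_{3,t-1}$ involves only the fixed Gaussian third moment, which is zero if $\delta_t$ is symmetric, but I would keep it general as $m_{3,t}$ to match the recursion).

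Next I would apply Theorem~\ref{thm:partially_observed}, noting that Assumption~\ref{ass:gaussian} implies Assumption~\ref{ass:moments_partially_observed} (Gaussian vectors have finite moments of all orders), so the theorem is in force. With $W_t$, $m_{3,t}$ deterministic, the recursions~\eqref{eq:aux_cost_to_go_V}--\eqref{eq:aux_cost_P} simplify: the conditional expectations $\E(\cdot|\F_t)$ become ordinary expectations, and since $e_{t+1}$ is a martingale difference independent of the past with $\E(e_{t+1}|\F_t)=0$, the terms $\E(P_{t+1}e_{t+1}|\F_t)$ and $\E(e_{t+1}'\zeta_{t+1}|\F_t)$ vanish (using that $P_{t+1}, \zeta_{t+1}$ become deterministic by backward induction). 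This kills the $\xi_t$ contribution from $P_{t+1}e_{t+1}$, leaving the clean recursion $\xi_{t-1} = (A+BK_t)'(\xi_t + V_t\bar w)$ with $\xi_{N-1}=0$, and it decouples $P_t$ from the estimation noise so that $P_t = V_t - Q_{\mu,t}$ collapses the two Riccati-type equations~\eqref{eq:aux_cost_to_go_V} and~\eqref{eq:aux_cost_P} into the single deterministic recursion $V_{t-1} = (A+BK_t)'V_t(A+BK_t) + K_t'RK_t + Q_{\mu,t-1}$. The control law~\eqref{eq:LQ_Optimal_Input} then specializes directly to $u_t^*(\mu) = K_t\hat{x}_{t|t} + l_t$ with $l_t = -(B'V_tB+R)^{-1}B'\xi_t$ (the $V_t\bar w$ term is already absorbed into $\xi_t$ in the Gaussian simplification, which is why $l_t$ here drops the $V_t\bar w$ summand relative to the general formula). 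I would verify the terminal-value bookkeeping carefully: shifting from $P_N=\zeta_N=0$ to $V_{N-1}=Q_{\mu,N-1}$, $\xi_{N-1}=0$ amounts to unrolling one step of the recursion and relabeling.

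The remaining routine check is that $u_t^*(\mu)\in\LL_4(\F_t)$ and indeed $\in\LL_p$ for all $p$: since $K_t, l_t$ are now deterministic bounded matrices/vectors, and $\hat{x}_{t|t}$ is a linear functional of Gaussian data (hence in $\LL_p$ for all $p$), this follows by the same recursive argument as in Theorem~\ref{ANA_THM_Optimal_Input}. I would also note that the separation structure is now genuine: the Kalman filter runs forward and deterministically, independent of $\mu$, while the control recursion runs backward. The main obstacle I anticipate is purely the algebraic bookkeeping of confirming that the deterministic collapse is consistent across all the coupled recursions --- in particular tracking which of the $\bar w$-dependent terms land in $\xi$ versus $l$, and ensuring the index shift between the two theorems' terminal conditions is handled without an off-by-one error. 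There is no genuine analytic difficulty here beyond what Theorem~\ref{thm:partially_observed} already supplies; the Gaussian case is precisely the regime where everything is classical and explicit. I would therefore present this proof as a corollary-style specialization, citing Theorem~\ref{thm:partially_observed} and standard Kalman filtering theory, and spelling out only the simplifications that use independence of $\delta_t$ from $\F_{t-1}$.
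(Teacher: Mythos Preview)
Your approach matches the paper's exactly: specialize Theorem~\ref{thm:partially_observed}, cite standard Kalman-filter theory for the forward recursion and the estimator update, and then observe that with $W_t$ and $m_{3,t}$ deterministic the quantities $V_t,P_t,\xi_t,\zeta_t$ are deterministic by backward induction, so $\E(P_{t+1}e_{t+1}\,|\,\F_t)=P_{t+1}\E(e_{t+1}\,|\,\F_t)=0$ and the conditional expectations drop, yielding $V_t=P_{t+1}+Q_{\mu,t}$ and $\xi_t=\zeta_{t+1}$.

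Two small points to tighten. First, do not hedge on $m_{3,t}$: the paper uses $m_{3,t}=0$ explicitly (the innovation $\delta_t$ is zero-mean Gaussian, hence its third central moment vanishes), and you need this to get $\xi_{N-1}=0$ and the stated $\xi$-recursion without a $\mu m_{3,t}$ summand. Second, your claim that the $V_t\bar w$ term in $l_t$ is ``already absorbed into $\xi_t$'' is not correct: once $m_{3,t}=0$, the Gaussian $\xi_t$ coincides with the general $\xi_t$ of Theorem~\ref{thm:partially_observed} (same recursion, same terminal value), so nothing is absorbed and the general formula still reads $l_t=-(B'V_tB+R)^{-1}B'(\xi_t+V_t\bar w)$. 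The paper's own proof does not address this discrepancy with the stated $l_t$ either; it is not something you can derive from the specialization.
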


\begin{proof}
The properties of $\delta_t$ and the recursions for $W_t$, $\hat{x}_{t|t}$ follow from standard Kalman Filter theory~\cite{anderson2005optimal}. We also have $m_{3,t}=0$ since the variables $\delta_t$ are Gaussian and, thus, symmetric. Since both $W_t,\,m_{3,t}$ are deterministic, matrices $V_t,\,P_t,\,\xi_t,\,\zeta_t$ in the statement of Theorem~\ref{thm:partially_observed} are also deterministic. Hence, we have $\E(P_{t}e_t|\F_{t})=0$ and we can remove the conditional expectations from $V_t,\xi_t$ as
\[
V_t=P_{t+1}+Q_{\mu,t},\, \xi_t=\zeta_{t+1},
\]
and the result now follows from Theorem~\ref{thm:partially_observed}.
\end{proof}
Unlike the fully-observed case, the inflated matrix takes into account not only the instantaneous process error $W$ but the whole prediction error. In other words, we account also for the uncertainty in the prediction due to partial observability.

The aforementioned property also differentiates our controller from classical risk-neutral Linear Quadratic Gaussian (LQG) control. The state penalties  $Q_{\mu,t}$ are inflated, time-varying, and they depend on the filtering process $W_t$ itself, whereas, in risk-neutral LQG, the control design is completely independent of the noise statistics. Hence, we obtain a novel family of risk-aware LQG policies, which include the classical LQG ($\mu=0$) as a special case. 

In the case of Gaussian noise the third moment $m_{3,t}=0$ is always zero due to symmetry. Contrary to the non-Guassian noise case, the affine term only accounts for the mean value of the noise. As a result, risk-aware behavior is mainly imposed through the gain $K_t$.

Finally, we prove a stability result for the closed-loop system under certain observability conditions.
\begin{assumption}[\textbf{Observability}]\label{ass:observability}
The pair $(A,C)$ is detectable, the pair $(A,W^{1/2})$ is stabilizable, and the covariance of the measurement noise is strictly positive definite $S \succ 0$.
\end{assumption}
To prove stability let us assume that  we start estimating/controlling the system at some arbitrary time $t_0<N$ instead of $0$, with $x_{t_0}$ deterministic and known. Based on this, all recursions in the statement of Theorem~\ref{thm:gaussian} are extended to hold for any $t=t_0,\dots,N$, with $W_{t_0}=0,\,\hat{x}_{t_0|t_0}=x_{t_0}$. We will prove that stability is achieved as we let the initial state $t_0$ and the horizon $N$ go to $-\infty$ and $+\infty$ respectively.

To simplify the proof, we also assume that the state penalty is strictly positive definite $Q\succ 0$. The proof can be extended to the case $Q\succeq 0$ at the cost of more complicated arguments, but we omit it in this paper-see proof in the Appendix for discussion. 
\begin{theorem}[\textbf{Stability}]\label{thm:gaussian_stability}
Consider the forward and backward recursions of Theorem~\ref{thm:gaussian} extended to the interval $t\in \set{t_0,\dots,N-1}$, with initial conditions $W_{t_0}=0$, $\hat{x}_{t_0|t_0}=0$ and terminal conditions as is. Let Assumptions~\ref{ass:controllability},~\ref{ass:gaussian},~\ref{ass:observability} be in effect. Let $V$, $W_{\infty}$ be the stabilizing solutions to the Discrete Algebraic Riccati Equations (DARE):
\begin{align*}
    W_{\infty}&=AW_{\infty}A'+W-AW_{\infty}C'(CW_{\infty}C+S)^{-1}CW_{\infty}A'\\
    V&=A'VA+Q_{\mu,\infty}-A'VB(B'VB+R)^{-1}B'VA,
\end{align*}
with $Q_{\mu,\infty}=Q+\mu QW_{\infty}Q$. Let $K\triangleq -(B'VB+R)^{-1}B'VA$ be the respective control gain, with $\bar{A}\triangleq A+BK$ the closed-loop matrix. Then, the closed-loop matrix $\bar{A}$ is stable and
\begin{equation}\label{eq:lqg_finite_time_error}
    \snorm{V_t-V}_2 \le \mathcal{C}_1\snorm{\bar{A}^{N-t-1}}+\mathcal{C}_2\snorm{W_t-W_{\infty}}_2,
\end{equation}
where $\mathcal{C}_1,\mathcal{C}_2$ are some positive constants that depend on the system parameters and $V$.
As a result, if we let $N\rightarrow \infty$, $t_0\rightarrow -\infty$:
\begin{align*}
 V_t&\rightarrow V,\quad K_t\rightarrow K,\quad W_t\rightarrow W\\
 \xi_t&\rightarrow \xi\triangleq (I-(A+BK)')^{-1}(A+BK)'V\bar{w}\\
 l_t&\rightarrow -(B'VB+R)^{-1}B'\xi,
\end{align*}
exponentially fast.
\end{theorem}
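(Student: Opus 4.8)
The plan rests on the observation that the recursions of Theorem~\ref{thm:gaussian} \emph{decouple}: the filtering covariance $W_t$ propagates forward and does not depend on the control, whereas the control Riccati variable $V_t$ propagates backward with the inflated penalty $Q_{\mu,t}$ driven by $W_t$. I would first settle the forward filter, then the limiting backward recursion together with the stability claim, then the finite-time bound \eqref{eq:lqg_finite_time_error}, and finally read off convergence of $K_t,\xi_t,l_t$. For the filter, Assumption~\ref{ass:observability} and standard Kalman theory \cite[Chapter~4]{anderson2005optimal} give that the filtering DARE has the unique stabilizing solution $W_\infty$, the corresponding closed-loop filter matrix is stable, and $W_t$ started from $W_{t_0}=0$ is monotone, bounded above by $W_\infty$, and converges to $W_\infty$ exponentially in the number of forward steps, $\snorm{W_t-W_\infty}_2\le c\,\rho_f^{\,t-t_0}$ with $\rho_f\in(0,1)$. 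By the contraction of the filtering Riccati map near $W_\infty$ one also has $\snorm{W_s-W_\infty}_2\le c'\snorm{W_t-W_\infty}_2$ for all $s\ge t$, hence $\snorm{Q_{\mu,t}-Q_{\mu,\infty}}_2\le c_\mu\snorm{W_t-W_\infty}_2\to0$.

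Since $Q\succ0$, we have $Q_{\mu,\infty}\succeq Q\succ0$, so $(A,Q_{\mu,\infty}^{1/2})$ is trivially detectable; together with $(A,B)$ stabilizable and $R\succ0$ from Assumption~\ref{ass:controllability}, classical LQR theory yields a unique stabilizing solution $V\succeq0$ of the control DARE, with $\bar A=A+BK$ stable and $\rho(\bar A)<1$. This already proves the stability claim and supplies constants $M<\infty$, $\rho\in(0,1)$ with $\snorm{\bar A^{\,k}}_2\le M\rho^{\,k}$ and $\sup_j\snorm{\bar A^{\,j}}_2<\infty$.

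The core step is \eqref{eq:lqg_finite_time_error}. Writing $E_t\triangleq V_t-V$ and applying the completed-square form of the Riccati operator twice — once for $V_t$ and once for $V$, both with the DARE gain $K$ as test gain — produces the exact identity $E_{t-1}=\bar A'E_t\bar A+(Q_{\mu,t-1}-Q_{\mu,\infty})-(K-K_t)'(B'V_tB+R)(K-K_t)$, whose last term is $O(\snorm{E_t}_2^2)$ because $K_t$ depends smoothly on $V_t$ near $V$ (invertibility of $B'V_tB+R$ from $R\succ0$ and $V\succeq0$). Unrolling backward from $t=N-1$, where $E_{N-1}=Q_{\mu,N-1}-V$ is uniformly bounded by the first step, gives $E_t=(\bar A')^{N-1-t}E_{N-1}\bar A^{N-1-t}+\sum_{k\ge0}(\bar A')^k\big[(Q_{\mu,t+k}-Q_{\mu,\infty})+O(\snorm{E_{t+k+1}}_2^2)\big]\bar A^k$. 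One then bounds the first term by $\mathcal{C}_1\snorm{\bar A^{\,N-t-1}}_2$ (absorbing the square into $\sup_j\snorm{\bar A^{\,j}}_2$), the penalty-mismatch sum by $\mathcal{C}_2\snorm{W_t-W_\infty}_2$ (using $\snorm{W_{t+k}-W_\infty}_2\le c'\snorm{W_t-W_\infty}_2$ and summability of $\snorm{\bar A^{\,k}}_2^2$), and the quadratic remainder via a bootstrap — first establish $\sup_t\snorm{E_t}_2\to0$ as $N-t_0\to\infty$ from the other two terms and a Gronwall-type estimate, then reinsert it — to obtain exactly \eqref{eq:lqg_finite_time_error}.

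Finally, fixing $t$ and letting $N\to\infty$ and $t_0\to-\infty$ in \eqref{eq:lqg_finite_time_error}, the first term vanishes exponentially because $N-t-1\to\infty$ and the second vanishes exponentially because $t-t_0\to\infty$, so $V_t\to V$ exponentially; then $K_t=-(B'V_tB+R)^{-1}B'V_tA\to K$ and $W_t\to W_\infty$ exponentially. The affine part obeys the backward linear recursion $\xi_{t-1}=(A+BK_t)'(\xi_t+V_t\bar w)$, whose coefficients converge exponentially to those of the stable map $\xi\mapsto\bar A'(\xi+V\bar w)$; since $\rho(\bar A)<1$, $I-\bar A'$ is invertible, the iterates stay bounded and converge exponentially to $\xi=(I-(A+BK)')^{-1}(A+BK)'V\bar w$, so that $l_t=-(B'V_tB+R)^{-1}B'\xi_t\to-(B'VB+R)^{-1}B'\xi$ exponentially. \textbf{The main obstacle} is the perturbation step: reconciling the backward $\bar A$-decay of the influence of the terminal condition with the forward decay of the filter error, and in particular controlling the regime $t\approx N$ where the iterates need not be close to $V$, which is exactly where the quadratic Riccati remainder and the uniform-in-horizon bootstrap require care; the stated extension to $Q\succeq0$ additionally needs detectability of $(A,Q_{\mu,\infty}^{1/2})$ to be deduced from detectability of $(A,Q^{1/2})$.
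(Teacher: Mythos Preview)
Your outline is sound through the filter analysis and the stability of $\bar A$, but the perturbation step takes a different route from the paper's and the bootstrap you propose has a real gap.

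The paper does \emph{not} use the symmetric identity $E_{t-1}=\bar A'E_t\bar A+(Q_{\mu,t-1}-Q_{\mu,\infty})-(K-K_t)'(B'V_tB+R)(K-K_t)$. It uses instead the \emph{asymmetric} Riccati difference identity (a generalization of Problem~4.5 in \cite{anderson2005optimal})
\[
V_{t-1}-V=\bar A'(V_t-V)(A+BK_t)+(Q_{\mu,t-1}-Q_{\mu,\infty}),
\]
which has \emph{no} quadratic remainder. Unrolling gives
\[
V_t-V=(\bar A')^{N-1-t}(Q_{\mu,N-1}-V)\Psi_{N-1:t}+\sum_{k=0}^{N-t-2}(\bar A')^k(Q_{\mu,t+k}-Q_{\mu,\infty})\Psi_{t+k:t},
\]
where $\Psi_{k_1:k_2}=(A+BK_{k_1})\cdots(A+BK_{k_2+1})$. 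The exponential decay comes from the powers of the stable $\bar A$; the price is that one must bound $\snorm{\Psi_{k_1:k_2}}_2$ \emph{uniformly} over all $k_1,k_2,N$. The paper does this via two monotonicity facts: (i) since $W_{t_0}=0$, the filter sequence $W_t$ is increasing, so $Q_{\mu,t}\preceq Q_{\mu,\infty}$ and hence $V_t\preceq V$ for all $t$; (ii) dropping PSD terms in the Riccati recursion yields $V_{k_2}\succeq\Psi_{k_1:k_2}'Q_{\mu,k_1}\Psi_{k_1:k_2}\succeq\Psi_{k_1:k_2}'Q\Psi_{k_1:k_2}$. Combining (i), (ii) and $Q\succ0$ gives $\snorm{\Psi_{k_1:k_2}}_2\le\sqrt{\snorm{V}_2/\sigma_{\min}(Q)}$, and \eqref{eq:lqg_finite_time_error} follows directly.

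Your bootstrap is where the argument breaks. First, the claim ``$\sup_t\snorm{E_t}_2\to0$ as $N-t_0\to\infty$'' is false: at $t=N-1$ one has $\snorm{E_{N-1}}_2=\snorm{Q_{\mu,N-1}-V}_2$, bounded away from zero regardless of $N,t_0$. Second, and more fundamentally, even if you aim only for the pointwise bound \eqref{eq:lqg_finite_time_error}, the unrolled inequality reads $\snorm{E_t}_2\le a_t+c\sum_k\snorm{\bar A^k}_2^2\snorm{E_{t+k+1}}_2^2$ with $a_t$ the two ``linear'' terms; closing this by a Gronwall/fixed-point argument requires an a~priori smallness condition of the type $4c\sup_t a_t<1$, which nothing in the hypotheses guarantees (near $t=N-1$, $a_t$ is of order $\snorm{V}_2$). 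The sign of the quadratic term does not rescue this: it yields only $E_t\preceq 0$, whereas the needed bound is on $\snorm{-E_t}_2$ from above, which reintroduces the quadratic term with the wrong sign. The paper's asymmetric identity sidesteps this difficulty entirely, and the uniform bound on the time-varying products $\Psi$ is where the real work---and the use of $Q\succ0$---actually resides.
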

The intuition behind sending $t_0$ and $N$ to $-\infty$, $\infty$ is the following. This corresponds to a doubly infinite horizon problem, where the estimation process has started infinitely long ago in the past and the control process is running continuously for an infinite amount of time. 
 At first sight, the result seems to be equivalent to proving stability of the classical risk-neutral LQG controller. However, by a more careful examination, equation~\eqref{eq:lqg_finite_time_error} is different from 
classical LQG. The reason is that the second term $\mathcal{C}_2\snorm{W_t-W_{\infty}}_2$ shows up in the error only in the case of risk-aware LQG. While estimation and control are designed independently in risk-neutral LQG, in the case of risk-aware LQG, the estimation procedure affects the convergence of the controller to its steady-state.

Similar to LEQG control for partially observed systems, our risk-aware controller regulates the state more strictly. However, this is achieved via a different mechanism, that is, via the inflation of the state penalty term. As a result, in our formulation, stability is guaranteed for any choice of $\mu$ in~\eqref{eq:LQG_optimal_input}. This is unlike LEQG control, which might be unstable if we do not tune the exponential parameter $\theta$ carefully~\cite{Speyer2008STOCHASTIC}. Notice also that the optimal estimator for our risk-aware LQG controller in~\eqref{eq:LQG_optimal_input} is the minimum mean-square estimator. This is different from LEQG control, where the estimator is a biased version of the minimum mean-square estimator. 

\section{Simulations And Discussion}\label{Section_Simulations}
\begin{figure}[t]
	\centering
	\includegraphics[width=0.8\columnwidth]{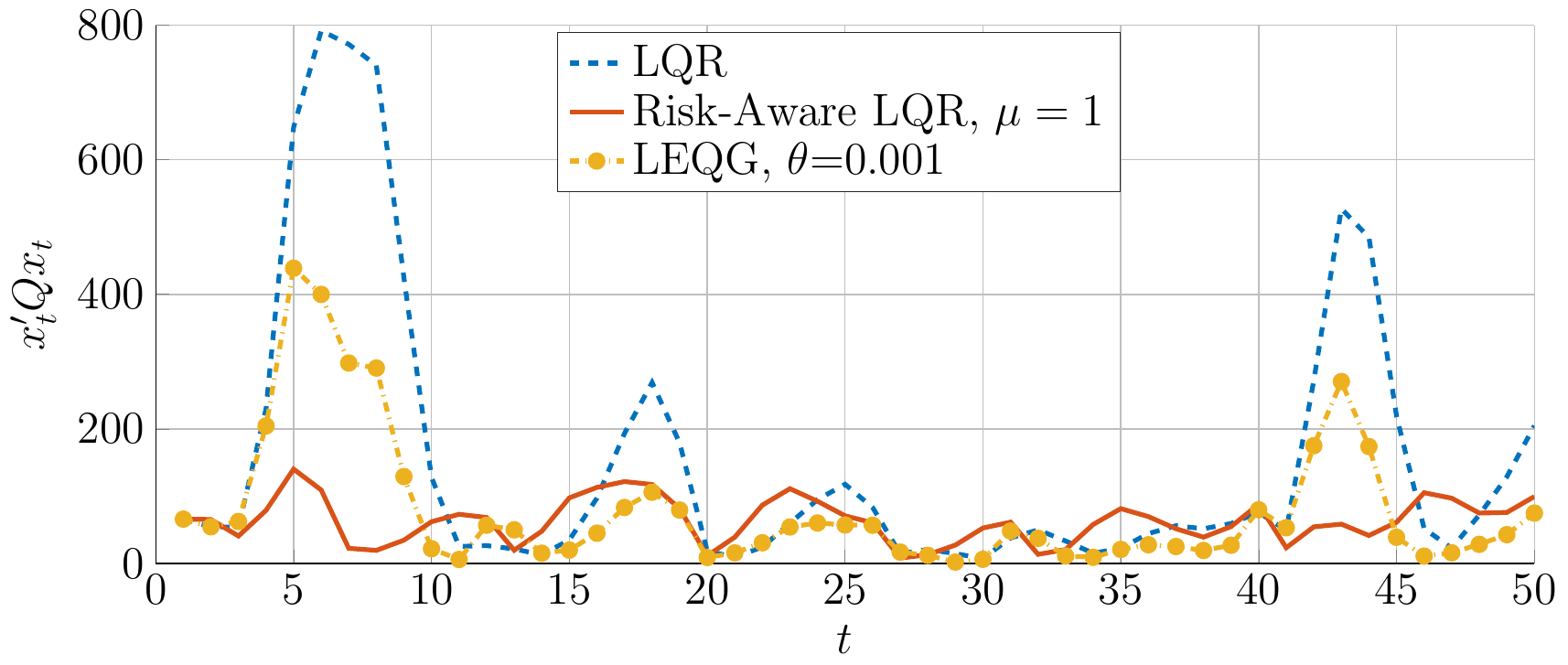}
	\caption{Evolution of the state penalties $x'_kQx_k$, over the first $50$ steps. Notice that our risk-aware LQR controller indeed limits the variability of $x'_kQx_k$. In fact, it sacrifices performance under small wind forces, but protects the system against large wind gusts, for example at time $5-10$.}
	\label{fig:xQx}
\end{figure}
Consider a flying robot that moves on a horizontal plane, i.e., the Euclidean space $\mathbb{R}^2$. We assume that its linearized dynamics can be abstracted by a double integrator as
\begin{equation*}
    x_{k+1}=\matr{{cccc}1&T_s&0&0\\0&1&0&0\\0&0&1&T_s\\0&0&0&1}x_k+\matr{{cc}\tfrac{T_s^2}{2}&0\\T_s&0\\0&\tfrac{T_s^2}{2}\\0&T_s}(\eta_k+d_k),
\end{equation*}
where  $T_s=0.5$ is the sampling time, $x_{k,1}$, $x_{k,3}$ are the position coordinates, $x_{k,2}$, $x_{k,4}$ the respective velocities and $\eta_k$ is the acceleration input. Let $d_k$ be a wind disturbance force that acts on the robot, which is modeled as follows: We assume that $d_{k,1}$ constitutes the dominant wind direction with non-zero mean and large variability, while the orthogonal direction $d_{k,2}$ is a weak wind direction with zero mean and small variability.
We model $d_{k,1}$ as a mixture of two gaussians $\mathcal{N}(30,30)$, $\mathcal{N}(80,60)$ with weights $0.8$ and $0.2$, respectively. This bimodal distribution models the presence of infrequent but large wind gusts.  The weak direction $d_{k,2}$ is modeled as zero-mean Gaussian~$\mathcal{N}(0,5)$.
If we cancel the mean of $d_k$ by applying $\eta_k=u_k-\E d_k$, then the system can be re-written in terms of~\eqref{FOR_EQN_system}, where $w_k=B(d_k-\E d_k)$ is now a zero-mean disturbance $\bar{w}=0$, and $u_k$ is the exogenous input. 

Consider now the LQR problem with parameters 
\[
Q = \textrm{diag}(1,0.1,2,0.1)
\Neg{2}\quad\textrm{and}\quad R=I,
\] 
and a horizon of length $N=5000$. We primarily compare our risk-aware LQR formulation with the classical, risk-neutral LQR via simulations. To tune our controller, we vary $\mu$ in~\eqref{eq:LQR_Optimal_Input} directly instead of varying $\epsilon$. We also (heuristically) compare our controller with the exponential (LEQG) method, even though the noise is not Gaussian, by plugging in the second order statistics $W$. Let the tuning parameter of LEQG be $\theta$. Note that the exponential problem is well defined only if $\theta<0.001276$ (roughly), where the ``neurotic breakdown" occurs~\cite{Whittle1981}. For the purpose of comparison, we simulate all schemes under the same noise sequence $w_{0:N}$.

In Fig.~\ref{fig:xQx}, we see the evolution of the state penalty terms $x_k'Qx_k$, for the first $50$ time steps, under the different control schemes.  By slightly sacrificing performance under small wind forces, our risk-aware LQR controller forces the state to have less variability and  protects the robot against large gusts. On the other hand, the state penalty can grow very large under the risk-neutral and LEQG schemes. This behavior is illustrated  more clearly in Fig.~\ref{fig:cdf}, where we present the time-empirical cumulative distribution of the state penalties for all $N$ time steps. The time-empirical "probability" of suffering large state penalties is drastically smaller compared to LQR or LEQG.

\begin{figure}[t]
	\centering
	\includegraphics[width=0.8\columnwidth]{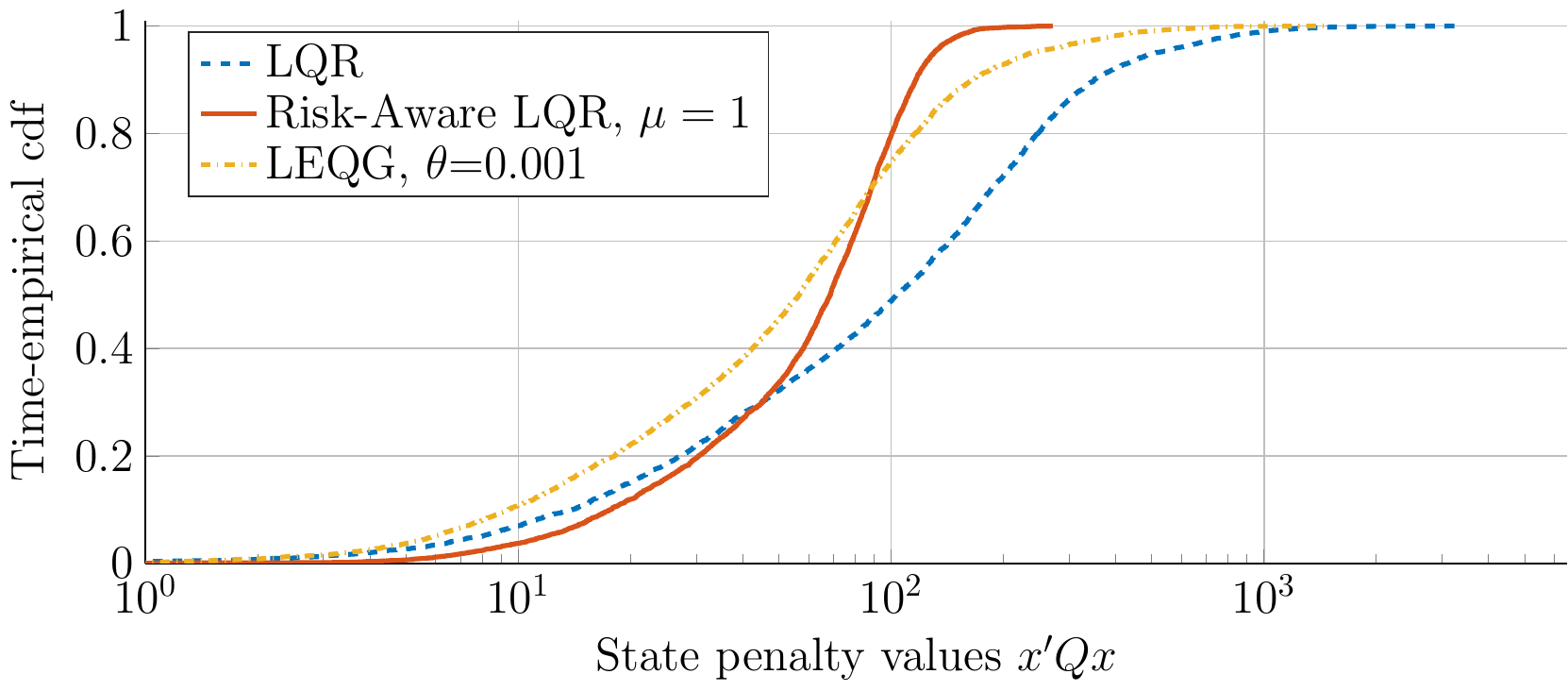}
	\caption{The time-empirical cdf for the state penalties $x_k'Qx_k$, $k\le N$, for the LQR (risk-neutral), our method, and LEQG. Our method sacrifices some average performance but exhibits much smaller variability for the state penalties. It protects the system against rare but large wind gusts.}
	\label{fig:cdf}
\end{figure}
\begin{figure}[t]
	\centering
	\includegraphics[width=0.8\columnwidth]{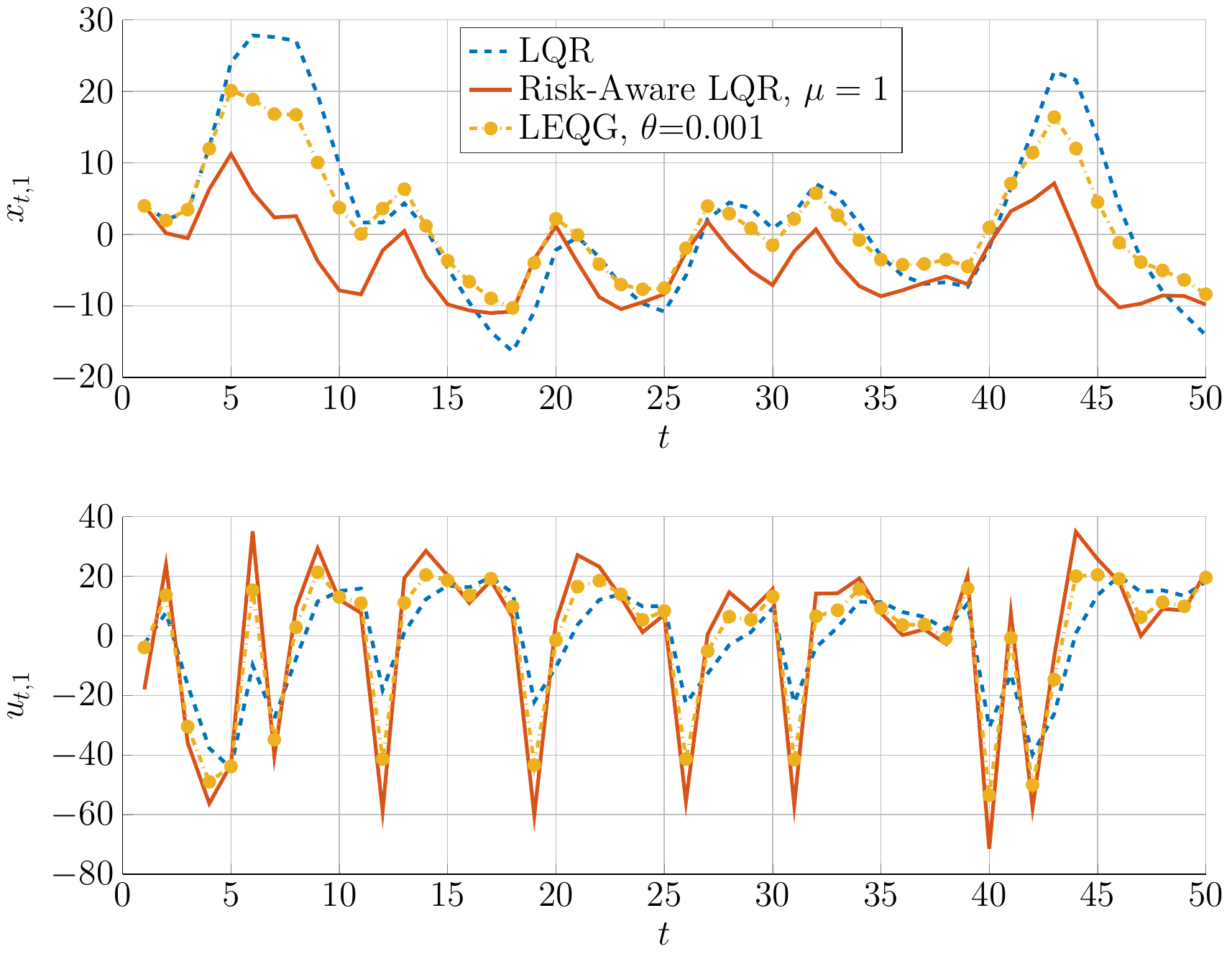}
	\caption{Evolution of the state $x_{k,1}$, and the input $u_{k,1}$ over the first $50$ steps. The controller pushes the state away from the direction of the large gusts, which helps the robot to avoid extreme perturbations. Meanwhile, by inflating the state penalty with the $\mu QWQ$, we force the state-feedback component to be more cautious with the state. Naturally, being more cautious with the state requires extra control effort.}
	\label{fig:x1}
\end{figure}
To better illustrate how the proposed risk-aware controller works, we also discuss the evolution of the position $x_{k,1}$ and the input $u_{k,1}$, as shown in Fig. \ref{fig:x1}, for the first $50$ steps. First, we observe that the controller pushes the state $x_{k,1}$ towards negative values, away from the direction of the large gusts.
Second, notice that we penalize $x_{k,3}$ more in $Q$. In fact, the risk-neutral LQR results in the steady state gains $K_{\mathrm{LQR},11}=-0.697$, $K_{\mathrm{LQR},12}=-1.201$, $K_{\mathrm{LQR},23}=-0.925$, $K_{\mathrm{LQR},24}=-1.376$, i.e., it is stricter with direction $x_{k,3}$. However, $x_{k,1}$ exhibits more variability due to the strong wind direction. In contrast, our risk-aware scheme adapts to the noise in a principled way. Due to the inflation term $\mu QWQ$, our scheme returns the steady-state gains $K_{11}=-2.1008$, $K_{12}=-2.2132$, $K_{23}=-1.1161$, $K_{24}=-1.5131$, which means that the risky direction $x_{k,1}$ is controlled more strictly. Naturally, being more cautious with the state leads to higher control effort, as shown in Fig.~\ref{fig:x1}. Lastly, although the LEQG controller is also more state-cautious, it is agnostic to the heavy tails of the wind distribution. Hence, it still suffers from large perturbation due to the wind gusts.

\subsection{Risk-aware LQG control}
In this section we evaluate the risk-aware LQG controller developed in Section~\ref{Section_Gaussian}.
We use the penalty matrices \[
Q = \textrm{diag}(1,0.5,2,0.5)
\Neg{2}\quad\textrm{and}\quad R=I.
\] 
However, the process noise is now mean-zero Gaussian, with $d_{k,1}\sim \mathcal{N}(0,30)$ and $d_{k,2}\sim \mathcal{N}(0,5)$.
For the measurement model, we assume
\[
C=\matr{{cccc}1&0&0&0\\0&0&1&0},\, \E{v_k v'_k}=\matr{{cc}5&2\\2&2},
\]which implies that we have access to position measurements. 
We compare our risk-aware LQG controller with the risk-neutral LQG and the LEQG schemes. For the LEQG scheme, we used the non-delayed version~\cite{Speyer2008STOCHASTIC}[Th 10.5].

We simulated the system for a horizon of length $N=3000$. The evolution of the state penalties for the first $50$ time steps is shown in Fig.~\ref{fig:partially_observed}. As expected from~\eqref{eq:LQG_optimal_input}, the controller regulates the state more strictly compared to the risk-neutral LQG controller, by inflating the $Q$ matrix. Note that contrary to the fully-observed example, the noise is zero-mean Gaussian here, hence, there is no affine term in the optimal controller. We observed that the LEQG controller has similar behavior for small values of the exponential parameter $\theta$. 

A more detailed comparison is shown in Fig.~\ref{fig:partially_observed_cdf}, where the time-empirical cumulative distributions of the state penalties and the input penalties over $3000$ time steps are shown. As we require our controller to be more risk-aware (we increase $\mu$), the state penalties become smaller since the risky directions of the state are regulated more strictly. Naturally, regulating the state more strictly requires more control effort, hence the input penalties become larger. As we approach the maximally-risk aware controller ($\mu=100$), we achieve the smallest state penalties but the largest input penalties.

\begin{figure}[t]
	\centering
	\includegraphics[width=0.8\columnwidth]{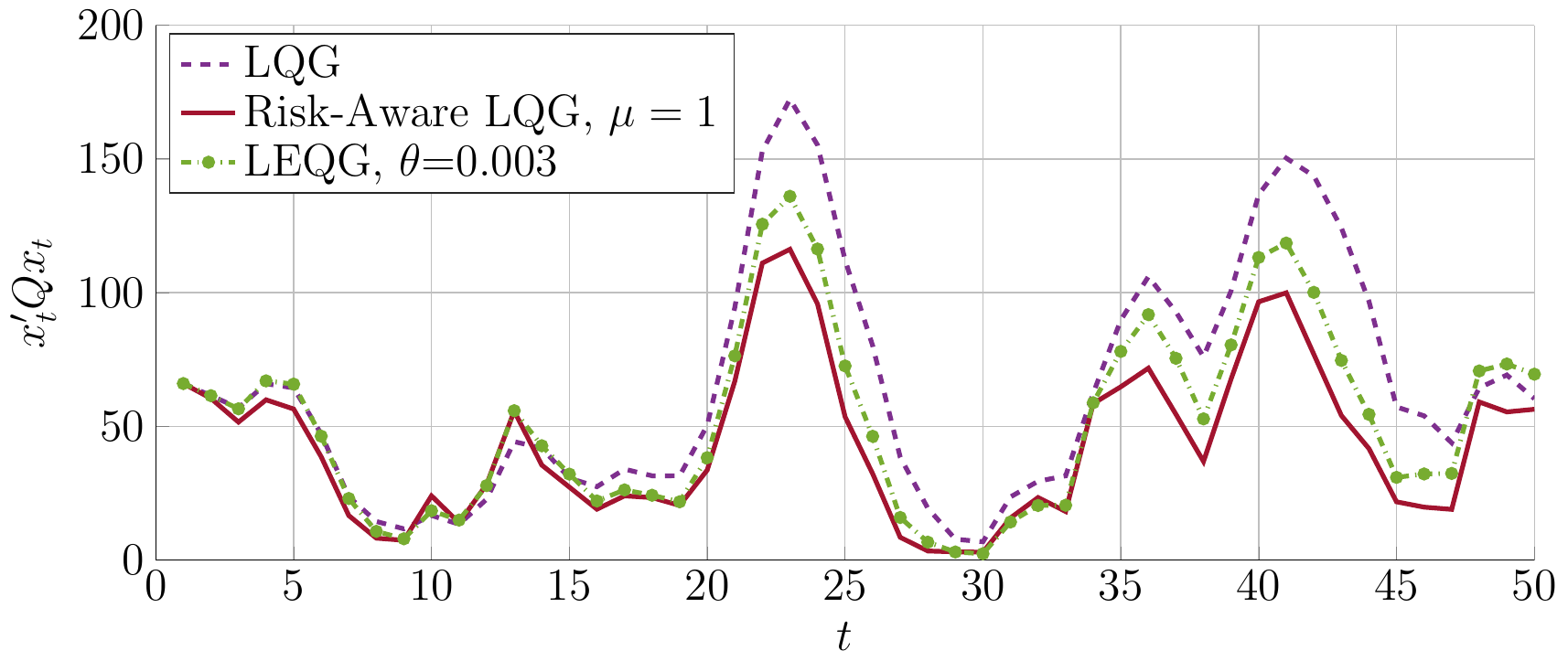}
	\caption{Evolution of the state penalties $x'_kQx_k$,
	over the first $50$ steps. 
		Our risk-aware LQG controller regulates the state more strictly by using more control effort. A similar property holds for the LEQG controller. The risk-neutral LQG controller suffers from larger state perturbations.
	}
	\label{fig:partially_observed}
\end{figure}

\begin{figure}[t]
	\centering
	\includegraphics[width=0.8\columnwidth]{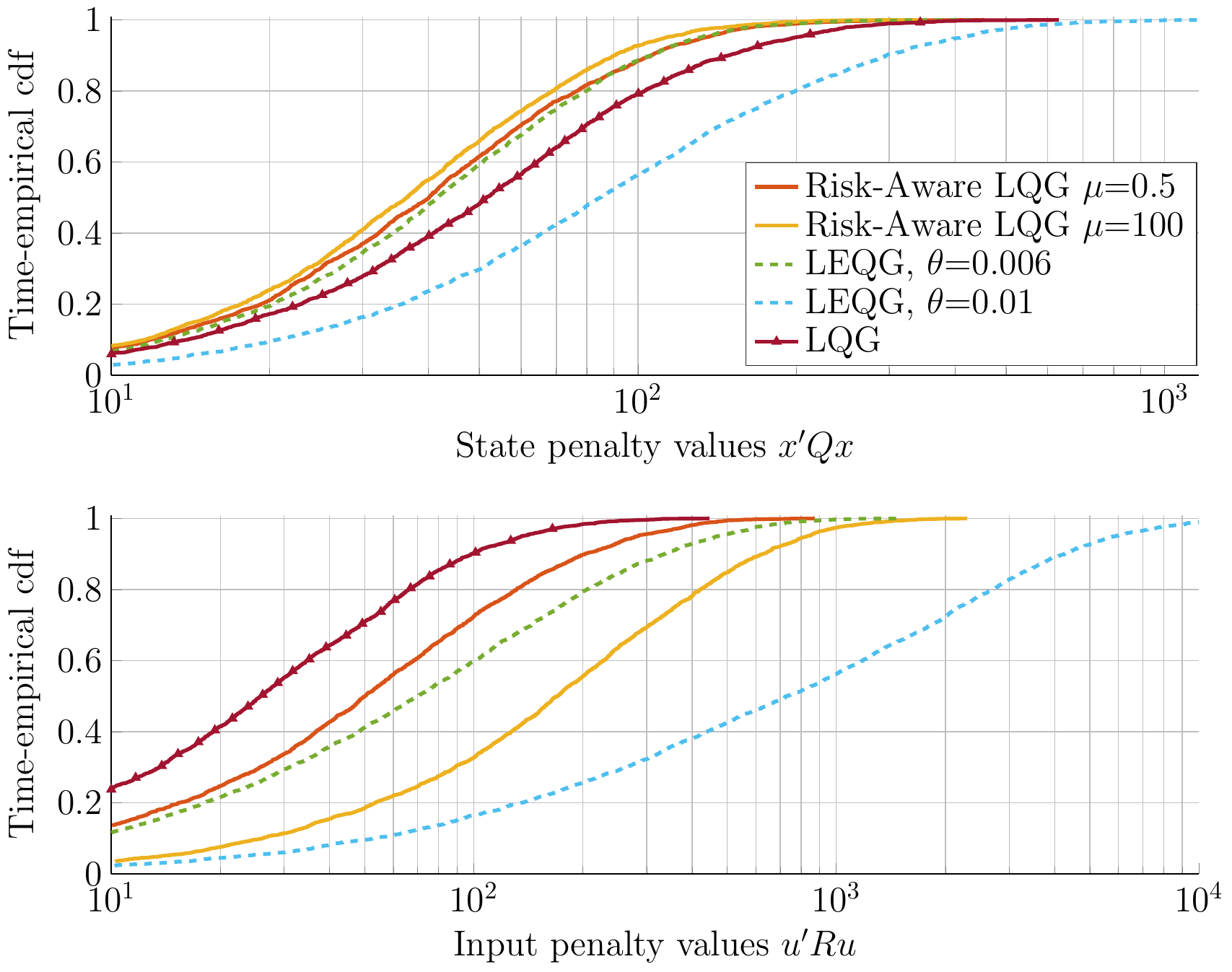}
	\caption{The time-empirical cdfs for the state and input penalties $x_k'Qx_k$, $u_k'Ru_k$ under our method, the LEQG controller, and the LQG (risk-neutral) controller. For better visibility we show the probabilities (y-axis) for penalties above the value of $10$ (x-axis). Our risk-aware formulation is more flexible, resulting in a wider variety of tradeoff curves between control effort and state regulation. It is more intuitive and easy to tune; as we increase $\mu$ we give more emphasis into regulating the risky directions of the state. The LEQG controller is less intuitive to tune. If $\theta$ increases too much there is a sharp decline in performance.
	}
	\vspace{-0.2cm}
	\label{fig:partially_observed_cdf}
\end{figure}

For small exponential parameters $\theta$ (below $0.006$) a similar behavior is observed in the case of the LEQG controller. As we increase $\theta$, the state is regulated more strictly at the expense of increased control effort. We achieve the smallest state penalties for roughly $\theta=0.006$. After this value, the tradeoff between control effort and state regulation becomes worse; for example, here both the state penalties and the input penalties increase as we increase $\theta$ past $0.006$. In fact, as $\theta$ approaches the ``neurotic breakdown" point, e.g. for $\theta=0.01$, both penalties become excessively large. This might be expected since the LEQG maximally risk-aware controller is very conservative, treating the noise as being adversarial rather than being stochastic, which is a different regime. On the contrary, our risk-aware LQG controller is well-behaved regardless the value of $\mu$. Hence it is more easy to tune and offers a wider variety of tradeoff curves between control effort and state regulation. For example, if we compare the risk-aware LQG controller for $\mu=0.5$ and the LEQG controller for $\theta=0.006$, then the risk-aware LQG controller achieves similar state penalties with less control effort.

\section{Conclusion}\label{sec:conclusion}
We studied a novel risk-aware formulation of the classical Linear Quadratic control problem, where we minimize average performance, subject to predictive variance constraints. 
This gives rise to risk-aware controllers which trade between average performance and protection 
against uncommon but strong random disturbances.
Our formulation is well-defined for general noise distributions, without requiring the existence of the respective moment generating functions. 
We characterized the optimal control laws for general partially-observed systems, which are affine with respect to the minimum mean-square state estimate.  We provided explicit risk-aware control formulas for the special cases of i) fully-observed systems and ii) Gaussian noise. The optimal controllers are easy to tune and are internally stable under standard controllability/observability conditions.

Moving forward, there are numerous interesting research directions. First, our formulation places more emphasis on regulating the state at the cost of increased control effort. To mitigate this, we could potentially include input power constraints~\cite{gattami2009generalized} in the quadratic formulation~\eqref{eq:LQ_constrained_reformulated}. Another open problem is explicitly computing the optimal control~\eqref{eq:LQ_Optimal_Input} in the case of partially-observed systems with non-Gaussian noise. Providing explicit closed-form expressions in this case is a hard problem, since it requires tracking of conditional moments. However, it might be possible to provide computational methods, which solve the problem approximately.
Lastly, our predictive
variance constraint is based on one-step-ahead prediction. In
some cases, this might make our controller
more myopic. Increasing the
prediction horizon, however, might not always preserve the quadratic
form of the constraint. In future work, we would also like to
address this issue.

\bibliographystyle{IEEEtran}
\bibliography{IEEEabrv,risk_literature,library_fixed}

\begin{thebibliography}{10}
\providecommand{\url}[1]{#1}
\csname url@samestyle\endcsname
\providecommand{\newblock}{\relax}
\providecommand{\bibinfo}[2]{#2}
\providecommand{\BIBentrySTDinterwordspacing}{\spaceskip=0pt\relax}
\providecommand{\BIBentryALTinterwordstretchfactor}{4}
\providecommand{\BIBentryALTinterwordspacing}{\spaceskip=\fontdimen2\font plus
\BIBentryALTinterwordstretchfactor\fontdimen3\font minus
  \fontdimen4\font\relax}
\providecommand{\BIBforeignlanguage}[2]{{%
\expandafter\ifx\csname l@#1\endcsname\relax
\typeout{** WARNING: IEEEtran.bst: No hyphenation pattern has been}%
\typeout{** loaded for the language `#1'. Using the pattern for}%
\typeout{** the default language instead.}%
\else
\language=\csname l@#1\endcsname
\fi
#2}}
\providecommand{\BIBdecl}{\relax}
\BIBdecl

\bibitem{Ahlen2019}
A.~Ahl{\'{e}}n, J.~Akerberg, M.~Eriksson, A.~L. F.~J. Isaksson, T.~Iwaki, K.~H.
  Johansson, S.~Knorn, T.~Lindh, and H.~Sandberg, ``Towards {W}ireless
  {C}ontrol in {I}ndustrial {P}rocess {A}utomation: {A} {C}ase {S}tudy at a
  {P}aper {M}ill,'' \emph{Control Systems, IEEE}, vol.~39, no.~5, pp. 36--57,
  2019.

\bibitem{Bruno2016}
S.~Bruno, S.~Ahmed, A.~Shapiro, and A.~Street, ``Risk-{N}eutral and
  {R}isk-{A}verse {A}pproaches to {M}ultistage {R}enewable {I}nvestment
  {P}lanning under {U}ncertainty,'' \emph{European Journal of Operational
  Research}, vol. 250, no.~3, pp. 979--989, May 2016.

\bibitem{Moazeni2015}
S.~Moazeni, W.~B. Powell, and A.~H. Hajimiragha, ``Mean-{C}onditional
  {V}alue-at-{R}isk {O}ptimal {E}nergy {S}torage {O}peration in the {P}resence
  of {T}ransaction {C}osts,'' \emph{IEEE Transactions on Power Systems},
  vol.~30, no.~3, pp. 1222--1232, May 2015.

\bibitem{Markowitz1952}
H.~Markowitz, ``Portfolio {S}election,'' \emph{The Journal of Finance}, vol.~7,
  no.~1, pp. 77--91, Mar. 1952.

\bibitem{Follmer2002}
H.~F{\"{o}}llmer and A.~Schied, ``Convex {M}easures of {R}isk and {T}rading
  {C}onstraints,'' \emph{Finance and Stochastics}, vol.~6, no.~4, pp. 429--447,
  Oct. 2002.

\bibitem{Shang2018}
D.~Shang, V.~Kuzmenko, and S.~Uryasev, ``Cash {F}low {M}atching with {R}isks
  {C}ontrolled by {B}uffered {P}robability of {E}xceedance and {C}onditional
  {V}alue-at-{R}isk,'' \emph{Annals of Operations Research}, vol. 260, no. 1-2,
  pp. 501--514, Jan. 2018.

\bibitem{Kim2019}
S.-K. Kim, R.~Thakker, and A.-A. Agha-Mohammadi, ``Bi-{D}irectional {V}alue
  {L}earning for {R}isk-{A}ware {P}lanning {U}nder {U}ncertainty,'' \emph{IEEE
  Robotics and Automation Letters}, vol.~4, no.~3, pp. 2493--2500, 2019.

\bibitem{Pereira2013}
A.~A. Pereira, J.~Binney, G.~A. Hollinger, and G.~S. Sukhatme, ``Risk-{A}ware
  {P}ath {P}lanning for {A}utonomous {U}nderwater {V}ehicles using {P}redictive
  {O}cean {M}odels,'' \emph{Journal of Field Robotics}, vol.~30, no.~5, pp.
  741--762, Sep. 2013.

\bibitem{Ma2018}
W.-J. Ma, C.~Oh, Y.~Liu, D.~Dentcheva, and M.~M. Zavlanos, ``Risk-{A}verse
  {A}ccess {P}oint {S}election in {W}ireless {C}ommunication {N}etworks,''
  \emph{IEEE Transactions on Control of Network Systems}, vol. 5870, no.~c, pp.
  1--1, 2018.

\bibitem{samuelson2018safety}
S.~{Samuelson} and I.~{Yang}, ``Safety-{A}ware {O}ptimal {C}ontrol of
  {S}tochastic {S}ystems {U}sing {C}onditional {V}alue-at-{R}isk,'' in
  \emph{2018 Annual American Control Conference (ACC)}, 2018, pp. 6285--6290.

\bibitem{chapman2019cvar}
M.~P. {Chapman}, J.~{Lacotte}, A.~{Tamar}, D.~{Lee}, K.~M. {Smith}, V.~{Cheng},
  J.~F. {Fisac}, S.~{Jha}, M.~{Pavone}, and C.~J. {Tomlin}, ``A
  {R}isk-{S}ensitive {F}inite-{T}ime {R}eachability {A}pproach for {S}afety of
  {S}tochastic {D}ynamic {S}ystems,'' in \emph{2019 American Control Conference
  (ACC)}, 2019, pp. 2958--2963.

\bibitem{jacobson1973optimal}
D.~{Jacobson}, ``Optimal {S}tochastic {L}inear {S}ystems with {E}xponential
  {P}erformance {C}riteria and their {R}elation to {D}eterministic
  {D}ifferential {G}ames,'' \emph{IEEE Transactions on Automatic Control},
  vol.~18, no.~2, pp. 124--131, 1973.

\bibitem{Whittle1981}
P.~Whittle, ``Risk-{S}ensitive {L}inear/{Q}uadratic/{G}aussian {C}ontrol,''
  \emph{Adv. Appl. Prob}, vol.~13, pp. 764--777, 1981.

\bibitem{bacsar2000risk}
T.~Ba{\c{s}}ar, ``Risk-{A}verse {D}esigns: {F}rom {E}xponential {C}ost to
  {S}tochastic {G}ames,'' in \emph{System Theory}.\hskip 1em plus 0.5em minus
  0.4em\relax Springer, 2000, pp. 131--143.

\bibitem{pham2012linear}
K.~D. Pham, \emph{Linear-Quadratic Controls in Risk-Averse Decision Making:
  Performance-Measure Statistics and Control Decision Optimization}.\hskip 1em
  plus 0.5em minus 0.4em\relax Springer Science \& Business Media, 2012.

\bibitem{roulet2019convergence}
V.~Roulet, M.~Fazel, S.~Srinivasa, and Z.~Harchaoui, ``On the {C}onvergence of
  the {I}terative {L}inear {E}xponential {Q}uadratic {G}aussian {A}lgorithm to
  {S}tationary {P}oints,'' 2019.

\bibitem{Speyer1992}
J.~Speyer, C.-H. Fan, and R.~Banavar, ``Optimal {S}tochastic {E}stimation with
  {E}xponential {C}ost {C}riteria,'' in \emph{Proceedings of the 31st IEEE
  Conference on Decision and Control}.\hskip 1em plus 0.5em minus 0.4em\relax
  Institute of Electrical and Electronics Engineers (IEEE), Aug. 1992, pp.
  2293--2298.

\bibitem{Dey1999}
S.~Dey and J.~B. Moore, ``Finite-{D}imensional {R}isk-{S}ensitive {F}ilters and
  {S}moothers for {D}iscrete-{T}ime {N}onlinear {S}ystems,'' \emph{IEEE
  Transactions on Automatic Control}, vol.~44, no.~6, pp. 1234--1239, 1999.

\bibitem{Moore1997}
J.~B. Moore, R.~J. Elliott, and S.~Dey, ``Risk-{S}ensitive {G}eneralizations of
  {M}inimum {V}ariance {E}stimation and {C}ontrol,'' \emph{Journal of
  Mathematical Systems, Estimation, and Control}, vol.~7, no.~1, pp. 123--126,
  1997.

\bibitem{Dey1997}
S.~Dey and J.~B. Moore, ``Risk-{S}ensitive {F}iltering and {S}moothing via
  {R}eference {P}robability {M}ethods,'' \emph{IEEE Transactions on Automatic
  Control}, vol.~42, no.~11, pp. 1587--1591, 1997.

\bibitem{Bauerle2014more}
N.~Bäuerle and U.~Rieder, ``More {R}isk-{S}ensitive {M}arkov {D}ecision
  {P}rocesses,'' \emph{Mathematics of Operations Research}, vol.~39, no.~1, pp.
  105--120, 2014.

\bibitem{Speyer2008STOCHASTIC}
J.~L. Speyer and W.~H. Chung, \emph{Stochastic Processes, Estimation, and
  Control}.\hskip 1em plus 0.5em minus 0.4em\relax Siam, 2008, vol.~17.

\bibitem{A.2018}
L.~A. Prashanth and M.~Fu, ``Risk-{S}ensitive {R}einforcement {L}earning: {A}
  {C}onstrained {O}ptimization {V}iewpoint,'' \emph{arXiv preprint,
  arXiv:1810.09126}, Oct. 2018.

\bibitem{Cardoso2019}
A.~R. Cardoso and H.~Xu, ``Risk-{A}verse {S}tochastic {C}onvex {B}andit,'' in
  \emph{International Conference on Artificial Intelligence and Statistics},
  vol.~89, Apr. 2019, pp. 39--47.

\bibitem{W.Huang2017}
W.~Huang and W.~B. Haskell, ``Risk-{A}ware {Q}-learning for {M}arkov {D}ecision
  {P}rocesses,'' in \emph{2017 IEEE 56th Annual Conference on Decision and
  Control, CDC 2017}, vol. 2018-Janua.\hskip 1em plus 0.5em minus 0.4em\relax
  IEEE, Dec. 2018, pp. 4928--4933.

\bibitem{Jiang2017}
D.~R. Jiang and W.~B. Powell, ``Risk-{A}verse {A}pproximate {D}ynamic
  {P}rogramming with {Q}uantile-{B}ased {R}isk {M}easures,'' \emph{Mathematics
  of Operations Research}, vol.~43, no.~2, pp. 554--579, Nov. 2018.

\bibitem{Kalogerias2018b}
D.~S. Kalogerias and W.~B. Powell, ``Recursive {O}ptimization of {C}onvex
  {R}isk {M}easures: {M}ean-{S}emideviation {M}odels,'' \emph{arXiv preprint,
  arXiv:1804.00636}, Apr. 2018.

\bibitem{Tamar2017}
A.~Tamar, Y.~Chow, M.~Ghavamzadeh, and S.~Mannor, ``Sequential {D}ecision
  {M}aking with {C}oherent {R}isk,'' \emph{IEEE Transactions on Automatic
  Control}, vol.~62, no.~7, pp. 3323--3338, Jul. 2017.

\bibitem{Vitt2018}
C.~A. Vitt, D.~Dentcheva, and H.~Xiong, ``Risk-{A}verse {C}lassification,''
  \emph{Annals of Operations Research}, Aug. 2019.

\bibitem{Zhou2018}
L.~Zhou and P.~Tokekar, ``An {A}pproximation {A}lgorithm for {R}isk-averse
  {S}ubmodular {O}ptimization,'' \emph{arXiv preprint, arXiv:1807.09358}, Jul.
  2018.

\bibitem{Ruszczynski2010}
A.~Ruszczy{\'{n}}ski, ``Risk-{A}verse {D}ynamic {P}rogramming for {M}arkov
  {D}ecision {P}rocesses,'' \emph{Mathematical Programming}, vol. 125, no.~2,
  pp. 235--261, Oct. 2010.

\bibitem{SOPASAKIS2019281}
P.~Sopasakis, D.~Herceg, A.~Bemporad, and P.~Patrinos, ``Risk-{A}verse {M}odel
  {P}redictive {C}ontrol,'' \emph{Automatica}, vol. 100, pp. 281 -- 288, 2019.

\bibitem{kalogerias2020noisy}
D.~S. Kalogerias, ``{N}oisy {L}inear {C}onvergence of {S}tochastic {G}radient
  {D}escent for {CV}@{R} {S}tatistical {L}earning under {P}olyak-{L}ojasiewicz
  conditions,'' \emph{arXiv preprint arXiv:2012.07785}, 2020.

\bibitem{ShapiroLectures_2ND}
A.~Shapiro, D.~Dentcheva, and A.~Ruszczy{\'n}ski, \emph{Lectures on Stochastic
  Programming: Modeling and Theory}, 2nd~ed.\hskip 1em plus 0.5em minus
  0.4em\relax Society for Industrial and Applied Mathematics, 2014.

\bibitem{Rockafellar1997}
R.~T. Rockafellar and S.~Uryasev, ``Optimization of {C}onditional
  {V}alue-at-{R}isk,'' \emph{Journal of Risk}, vol.~2, pp. 21--41, 1997.

\bibitem{chapman2022cvarLQ}
M.~P. Chapman and L.~Lessard, ``{T}oward a {S}calable {U}pper {B}ound for a
  {CV}a{R}-{LQ} {P}roblem,'' \emph{IEEE Control Systems Letters}, vol.~6, pp.
  920--925, 2022.

\bibitem{zhou1996robust}
K.~Zhou, J.~Doyle, and K.~Glover, \emph{{R}obust and {O}ptimal
  {C}ontrol}.\hskip 1em plus 0.5em minus 0.4em\relax Prentice Hall, 1996.

\bibitem{tzortzis2016robust}
I.~{Tzortzis}, C.~D. {Charalambous}, T.~{Charalambous}, C.~K. {Kourtellaris},
  and C.~N. {Hadjicostis}, ``Robust {L}inear {Q}uadratic {R}egulator for
  {U}ncertain {S}ystems,'' in \emph{2016 IEEE 55th Conference on Decision and
  Control (CDC)}, 2016, pp. 1515--1520.

\bibitem{dean2020sample}
S.~Dean, H.~Mania, N.~Matni, B.~Recht, and S.~Tu, ``On the {S}ample
  {C}omplexity of the {L}inear {Q}uadratic {R}egulator,'' \emph{Foundations of
  Computational Mathematics}, vol.~20, no.~4, pp. 633--679, 2020.

\bibitem{glover1988state}
K.~Glover and J.~C. Doyle, ``State-{S}pace {F}ormulae for {A}ll {S}tabilizing
  {C}ontrollers that {S}atisfy an ${H}_{\infty}$-norm {B}ound and {R}elations
  to {R}isk {S}ensitivity,'' \emph{Systems \& control letters}, vol.~11, no.~3,
  pp. 167--172, 1988.

\bibitem{zhang2021derivative}
K.~Zhang, X.~Zhang, B.~Hu, and T.~Ba{\c{s}}ar, ``Derivative-{F}ree {P}olicy
  {O}ptimization for {R}isk-{S}ensitive and {R}obust {C}ontrol {D}esign:
  {I}mplicit {R}egularization and {S}ample {C}omplexity,'' \emph{arXiv preprint
  arXiv:2101.01041}, 2021.

\bibitem{goel2021regret}
G.~Goel and B.~Hassibi, ``Regret-optimal measurement-feedback control,'' in
  \emph{Learning for Dynamics and Control}.\hskip 1em plus 0.5em minus
  0.4em\relax PMLR, 2021, pp. 1270--1280.

\bibitem{tsiamis2020risk}
A.~Tsiamis, D.~S. Kalogerias, L.~F. Chamon, A.~Ribeiro, and G.~J. Pappas,
  ``Risk-{C}onstrained {L}inear-{Q}uadratic {R}egulators,'' in \emph{59th IEEE
  Conference on Decision and Control (CDC)}, 2020, pp. 3040--3047.

\bibitem{zhao2021infinite}
F.~Zhao, K.~You, and T.~Basar, ``{I}nfinite-horizon {R}isk-constrained {L}inear
  {Q}uadratic {R}egulator with {A}verage {C}ost,'' \emph{arXiv preprint
  arXiv:2103.15363}, 2021.

\bibitem{zhao2021global}
F.~Zhao, K.~You, and T.~Ba{\c{s}}ar, ``{G}lobal {C}onvergence of {P}olicy
  {G}radient {P}rimal-dual {M}ethods for {R}isk-constrained {LQR}s,''
  \emph{arXiv preprint arXiv:2104.04901}, 2021.

\bibitem{abeille2016lqg}
M.~Abeille, A.~Lazaric, X.~Brokmann \emph{et~al.}, ``{LQG} for {P}ortfolio
  {O}ptimization,'' \emph{Available at SSRN:
  https://ssrn.com/abstract=2863925}, 2016.

\bibitem{bertsekas2017dynamic}
D.~P. Bertsekas, \emph{Dynamic Programming and Optimal Control}, 4th~ed.\hskip
  1em plus 0.5em minus 0.4em\relax Athena Scientific, 2017, vol.~1.

\bibitem{anderson2005optimal}
B.~Anderson and J.~Moore, \emph{Optimal Filtering}.\hskip 1em plus 0.5em minus
  0.4em\relax Dover Publications, 2005.

\bibitem{ruszczynski2011nonlinear}
A.~Ruszczynski, \emph{Nonlinear Optimization}.\hskip 1em plus 0.5em minus
  0.4em\relax Princeton university press, 2011.

\bibitem{lewis1981generalized}
F.~Lewis, ``A {G}eneralized {I}nverse {S}olution to the {D}iscrete-{T}ime
  {S}ingular {R}iccati {E}quation,'' \emph{IEEE Transactions on Automatic
  Control}, vol.~26, no.~2, pp. 395--398, 1981.

\bibitem{gattami2009generalized}
A.~Gattami, ``Generalized {L}inear {Q}uadratic {C}ontrol,'' \emph{IEEE
  Transactions on Automatic Control}, vol.~55, no.~1, pp. 131--136, 2009.

\bibitem{bertsekas2012approximate}
D.~P. Bertsekas, \emph{Dynamic Programming and Optimal Control}, 4th~ed.\hskip
  1em plus 0.5em minus 0.4em\relax Athena Scientific, 2012, vol. 2: Approximate
  Dynamic Programming.

\end{thebibliography}
\appendix
\section*{Proof of Theorem~\ref{thm:OPTIMAL}}
To prove part 1), let $\mu_2> \mu_1\ge 0$. From the definition of the Lagrangian and optimality of the controller $u^*(\mu)$, we obtain the inequalities
\begin{align*}
    J(u^*(\mu_1))+\mu_1 J_R(u^*(\mu_1))&\le  J(u^*(\mu_2))+\mu_1 J_R(u^*(\mu_2))\\
    J(u^*(\mu_1))+\mu_2 J_R(u^*(\mu_1))&\ge J(u^*(\mu_2))+\mu_2 J_R(u^*(\mu_2))  .
\end{align*}
By subtracting, we get
\[
(\mu_2-\mu_1)\set{J_R(u^*(\mu_1))-J_R(u^*(\mu_2))}\ge 0,
\]
which shows that $J_R(u^*(\mu_1))\ge J_R(u^*(\mu_2))$. The proof of
$J(u^*(\mu_1))\le J(u^*(\mu_2))$ is similar.

To prove part 2), we first show that, whenever $\mu^*<\infty$, $\mu^*\Neg{0.6}(J_R(u^*\Neg{0.6}(\mu^*))\Neg{1}-\Neg{1}\bar{\epsilon})\Neg{2.2}=\Neg{2}0$, i.e., complementary slackness holds. 
We have two cases: either $\mu^*=0$, where complementary slackness is satisfied trivially; or $\mu^*>0$, $J_R(u^*(\mu^*))\le \bar{\epsilon}$. Therefore, it will be sufficient to show that in the latter case we can only have $J_R(u^*(\mu^*))= \bar{\epsilon}$.
Since $\mu^*>0$, it is true that $J_R(u^*(0))>\bar{\epsilon}$. Now, assume that $J_R(u^*(\mu^*))< \bar{\epsilon}$. Then by the assumption of continuity of $J_R(u^*(\mu))$, there exists a $0<\bar{\mu}<\mu^*$ such that $J_R(u^*(\bar{\mu}))=\bar{\epsilon}$, contradicting the definition of $\mu^*$. Hence, we can only have $J_R(u^*(\mu^*))= \bar{\epsilon}$, which shows that complementary slackness is satisfied. 

Now, complementary slackness, along with the trivial fact that $J_R(u^*(\mu^*))\le \bar{\epsilon}$ imply that the policy-multiplier pair $(u^*(\mu^*),\mu^*) \in \mathcal{U}_0 \times \mathbb{R}_+$ satisfies the sufficient conditions for optimality provided by Theorem~\ref{thm:KKT}. Enough said. 

To prove the last claim of part 2), suppose that \eqref{eq:LQ_constrained_reformulated} 
 satisfies
Slater's condition. For every $\mu\ge0$, we have
\begin{equation}
\begin{aligned}
    D(\mu) &\le J(u^\dagger) + \mu (J_R(u^\dagger)-\bar{\epsilon}) \nonumber\\
\implies D(\mu) - \mu (J_R(u^\dagger)-\bar{\epsilon}) &\le J(u^\dagger)<\infty. \nonumber
\end{aligned}
\end{equation}
Next, suppose that, for every $\mu\ge0$, $J_R(u^*(\mu))-\bar{\epsilon}\ge0$. Because $J(u^*(\cdot))$ is increasing on $\mathbb{R}_+$, it must be true that
\begin{equation}
\begin{aligned}
    J(u^\dagger) \Neg{2}&\ge\Neg{1} \sup_{\mu\ge0} D(\mu) \Neg{1}-\Neg{1} \mu (J_R(u^\dagger)\Neg{1}-\Neg{1}\bar{\epsilon}) \nonumber\\
    & =\Neg{1} \sup_{\mu\ge0} J(u^*(\mu)) \Neg{1}+\Neg{1} \mu (J_R(u^*(\mu))\Neg{1}-\Neg{1}\bar{\epsilon}) \Neg{1}-\Neg{1} \mu (J_R(u^\dagger)\Neg{1}-\Neg{1}\bar{\epsilon}) \nonumber\\
    & = \Neg{1}\infty,
\end{aligned}
\end{equation}
which contradicts the fact that $J(u^\dagger)<\infty$. Therefore, there must exist $\mu^\dagger \ge0$, such that $J_R(u^*(\mu^\dagger))-\bar{\epsilon}<0$. But $J_R(u^*(\cdot))$ is decreasing on $\mathbb{R}_+$ and, consequently, it must be the case that $\mu^*\in[0,\mu^\dagger)$. The proof is now complete.
\hfill $\qedsymbol$

\section*{Proof of Theorem~\ref{thm:partially_observed}}
The quadratic and linear penalties $Q_{\mu,t}$, $m_{3,t}$, the errors $\delta_t,\epsilon_t$, and all parameters $V_t,\xi_t,d_t,K_t,l_t,P_t,\zeta_t,c_t$ are a function of the stochastic dynamics of the system. To see why this holds, 
define the stochastic part of the system as:
\[
x^s_k\triangleq x_k-\sum_{i=0}^{k}BA^{i}u_{k-i}=Ax^s_{k-1}+w_k,\,y^s_k\triangleq Cx^s_k+v_k,
\]
with $x^s_0=x_0$.
We can define the deterministic part of the system as:
\[
x^d_t\triangleq \sum_{i=0}^{k}BA^{i}u_{k-i}=x_t-x^s_t.
\]
Since the deterministic part $x^d_t\in\F_{t-1}$ is measurable with respect to the current information:
\[
\E(x^d_t|\F_{t-1})=x^d_t.
\]
As a result,
\[
\delta_t=x^s_t-\E(x^s_t|\F_{t-1}),\,e_t=\E(x^s_t|\F_{t})-\E(x^s_t|\F_{t-1}).
\]
Let now $u^1,u^2\in \mathcal{U}_0$ be two arbitrary policies. Denote by $x^{s,1}_t$, $x^{s,2}_t$ the respective stochastic component of the state. Then for any sample of the probability space (everywhere):
\[
x^{s,1}_t=x^{s,2}_t.
\]
In other words, the stochastic states are point-wise independent of the inputs. The same holds for the quadratic and linear penalties $Q_{\mu,t}$, $m_{3,t}$, the errors $\delta_t,\epsilon_t$, and all parameters $V_t,\xi_t,d_t,K_t,l_t,P_t,\zeta_t,c_t$.
Then, this recursively implies that pointwise everywhere:
\[
\frac{\partial V_t}{\partial u_t}=0,\, \frac{\partial \xi_t}{\partial u_t}=0,\, \frac{\partial d_t}{\partial u_t}=0.
\]
Meanwhile, since all moments of $w_t$ exist and $R\succ 0$ is strictly positive definite, all moments of $V_t,\xi_t,d_t,K_t,l_t,P_t,\zeta_t,c_t$ also exist (follows from Hölder's inequality). In particular, all moments of $K_t$, $l_t$ exist.

By using dynamic programming and assuming (temporarily) that involved measurability issues are resolved~\cite{bertsekas2012approximate},  we have, for every $k\le N-1$, the recursive optimality condition (i.e., the Bellman equation)
\begin{align*}
 \LAG^*_k(z_{k},\mu)=\inf_{u_{k}} g_{k+1}(z_k,u_k,\mu)+\E\set{ \LAG^*_{k+1}(z_{k+1},\mu)|\F_{k}}. 
 \end{align*}
The base case is obvious since $\LAG^*_N(z_N,\mu)=0$. Assume it is true for $k=t+1$, we will show that the same holds for $k=t$. Writing $\hat{x}_{t+1|t+1}=\hat{x}_{t+1}+e_{t+1}$, we have:
\begin{align*}
   & \E\set{ \LAG^*_{t+1}(z_{t+1},\mu)|\F_{t}}\\
    &=\E\set{\hat{x}'_{t+1|t+1}P_{t+1}\hat{x}'_{t+1|t+1}+2\zeta'_{t+1}\hat{x}_{t+1|t+1}+c_{t+1}|\F_{t}}\\
    &=\hat{x}'_{t+1}\E(P_{t+1}|\F_t)\hat{x}_{t+1}+2\E(e'_{t+1}P_{t+1}+\zeta'_{t+1}|\F_{t})\hat{x}_{t+1} \\
    &+\E(c_{t+1}|\F_t)+\E(e'_{t+1}P_{t+1}e_{t+1}|\F_{t})+2\E(\zeta'_{t+1}e_{t+1}|\F_{t}).
\end{align*}
As a result, we obtain the following quadratic form
\begin{equation}\label{OUT_eq:quadratic_cost_to_go}
\begin{aligned}
 &g_{t}(z_t,u_t,\mu)+\E\set{ \LAG^*_{t+1}(z_{t+1},\mu)|\F_{t}}\\
 &=\hat{x}'_{t+1}V_t\hat{x}_{t+1}+u'_tRu_t+2\xi'_t \hat{x}_{t+1}+d_t.
\end{aligned}
\end{equation}
Based on the above we can also verify that all measurability issues are now resolved in a recursive way, retrospectively. Recall that $\hat{x}_{t+1}=A\hat{x}_{t|t}+Bu_t+\bar{w}$. 
Since all $V_t,\xi_t,d_t$ are independent of $u_t$ pointwise, the unique stationary point of the above convex quadratic form is given by:
\[
u^*_t=K_t\hat{x}_{t|t}+l_t.
\]
Plugging the optimal input~\eqref{eq:LQ_Optimal_Input} into~\eqref{OUT_eq:quadratic_cost_to_go} gives the optimal cost-to-go~\eqref{eq:optimal_cost_to_go}. In detail:
\begin{align*}
  &\set{A\hat{x}_{t|t}+Bu^*_t+\bar{w}}'V_t \set{A\hat{x}_{t|t}+Bu^*_t+\bar{w}}\\
  &+(u^*_t)'Ru^*_t+2\xi'_t\set{A\hat{x}_{t|t}+Bu^*_t+\bar{w}}+d_t\\
  &=\hat{x}'_{t|t} P_t\hat{x}_{t|t}+2\set{Bl_t+\bar{w}}'V_t (A+BK_t)\hat{x}_{t|t}\\
  &+\set{Bl_t+\bar{w}}'V_t \set{Bl_t+\bar{w}}+2l'_tRK_t\hat{x}_{t|t}+l'_tRl_t\\
  &+2\xi'_t(A+BK_t)\hat{x}_{t|t}+2\xi'_t(Bl_t+\bar{w})+d_t\\
  &\stackrel{i)}{=}\hat{x}'_{t|t} P_t\hat{x}_{t|t}+2\bar{w}'V_t(A+BK_t)\hat{x}_{t|t}\\
  &+2\xi'_t(A+BK_t)\hat{x}_{t|t}+\set{Bl_t+\bar{w}}'V_t \set{Bl_t+\bar{w}}\\
  &+l'_tRl_t+2\xi'_t(Bl_t+\bar{w})+d_t\\
  &\stackrel{ii)}{=}\hat{x}'_{t|t} P_t\hat{x}_{t|t}+2\bar{w}'V_t(A+BK_t)\hat{x}_{t|t}+2\xi'_t(A+BK_t)\hat{x}_{t|t}\\
  &+\bar{w}'V_t \bar{w}-l'_t(B'V_tB+R)l_t+2\xi_t'\bar{w}+d_t
\end{align*}
where the cancellations in $i)$ follow from the identity:
\[
(B'V_tB+R)K_t=-B'V_tA,
\]
and $ii)$ follows from:
\[
2\xi'Bl_t+2\bar{w}'V_tBl_t=-2l'_t(B'V_tB+R)l_t.
\]
Recursively, we can verify that the optimal control has finite fourth moments. In fact, all higher-order moments of $u^*_t$ exist. This follows from the fact that all moments of $K_t$, $l_t, w_t$ exist.
\hfill $\qedsymbol$

\section*{Proof of Proposition~\ref{prop:LQ_Risk_Evaluation}}
We will only sketch the proof of continuity. The proof of the recursive expressions is omitted since it is similar to the proof of Theorem~\ref{thm:partially_observed}. Let $\mu_s\ge 0$, $s=1,2,\dots$ be a sequence such that $\mu_s\rightarrow \mu$ and let $\snorm{\mu}_{\infty}\triangleq \sup_{s\ge 1} \mu_s$. We will use Dominated Convergence Theorem (DCT) for conditional expectation to prove that $J_R(u^*(\mu_s))$ converges to $J_R(u^*(\mu))$. To emphasize the dependence on $\mu_s$, we will use the notation $V_t(\mu_s),\dots,c_t(\mu_s)$ and $\Theta_t(\mu_s),\dots,g_{t}(\mu_s)$ for the quantities appearing in Theorem~\ref{thm:partially_observed} and in the statement.

We will show that the terms $V_{N-2}(\mu_s),\, H_{N-2}(\mu_s)$ all converge almost surely to their respective limits. The proof for the remaining terms is similar. The idea is to show that $K_{N-1}$ is dominated by a function which is independent of the index $s$. Note that $\snorm{V_{N-1}(\mu_s)}_2\le \snorm{\mu}_{\infty}\snorm{W_{N-1}}_2\snorm{Q}^2_2+\snorm{Q}_2$. Since $R$ is invertible, we obtain:
\[
\snorm{K_{N-1}(\mu_s)}_2\le O(\snorm{W_{N-1}}_2+1).
\]
We interpret the notation $\alpha=O(\beta)$ as follows: there is a deterministic constant $\mathcal{C}=\mathcal{C}(\snorm{\mu}_{\infty})$  such that $\alpha\le \mathcal{C}\beta$ almost surely. As a result, we also obtain:
\[
\snorm{P_{N-1}(\mu_s)}_2,\snorm{\Theta_{N-1}(\mu_s)}_2 \le O(\snorm{W_{N-1}}_2^3+1).
\]
Note that since all moments of $\delta_{N}$ exist (follows from Assumption~\ref{ass:moments_partially_observed}), the term $\snorm{W_{N-1}}^3$ has finite expectation. Meanwhile, almost surely $P_{N-1}(\mu_s)\rightarrow P_{N-1}(\mu)$, $\Theta_{N-1}(\mu_s)\rightarrow \Theta_{N-1}(\mu)$. Hence, by the conditional DCT, we also have:
\begin{align*}
\E(P_{N-1}(\mu_s)|\F_{N-2})&\rightarrow \E(P_{N-1}(\mu)|\F_{N-2}),\text{ a.s. }\\
\E(\Theta_{N-1}(\mu_s)|\F_{N-2})&\rightarrow \E(\Theta_{N-1}(\mu)|\F_{N-2}),\text{ a.s. }
\end{align*}
This proves that $V_{N-2}(\mu_s),\, H_{N-2}(\mu_s)$ converge to $V_{N-2}(\mu),\, H_{N-2}(\mu)$ almost surely. Meanwhile, they remain dominated by:
\begin{align*}
&\snorm{V_{N-2}(\mu_s)}_2,\snorm{H_{N-2}(\mu_s)}_2 \le\\ &O(\E(\snorm{W_{N-1}}_2^3|\F_{N-2})+\snorm{W_{N-2}}_2+1).
\end{align*}

Proceeding similarly, by successive use of DCT and the existence of all moments of $\delta_t$, we can show that all matrices $V_t(\mu_s),\,P_t(\mu_s),\,H_t(\mu_s),\,\Theta_t(\mu_s)$ and their (conditional) means converge to the respective limits almost surely. We omit the details to avoid repetition. The same will hold for terms $\eta_t(\mu_s)$ and $\gamma_0(\mu_s)$ and their (conditional) means. \hfill $\qed$

\section*{Proof of Theorem~\ref{thm:gaussian_stability}}
For convenience, we drop the index $\mu$ from $Q_{\mu,t}$. 
Note that from standard Kalman filter theory~\cite{anderson2005optimal}[Ch. 4.4], the covariance $W_t$ converges exponentially fast to $W_{\infty}$ as $t_0$ goes to $-\infty$. Moreover, since $W_{t_0}=0$, the sequence $W_t$ is increasing with respect to the positive semi-definite cone~\cite{anderson2005optimal}[Ch. 4.4].
Since $Q_{\infty}\succeq Q$, we also obtain that $(A,Q^{1/2}_{\infty})$ is detectable. Hence, the stabilizing solution $V$ is well-defined and $A+BK$ is stable. 

We will only prove~\eqref{eq:lqg_finite_time_error}. The remaining results follow using similar arguments.
First we state a lemma that generalizes Problem 4.5, Section 4 in~\cite{anderson2005optimal}.
\begin{lemma}\label{lem:app_RDE}
Let $V_t,\,\bar{V}_t$ satisfy the Riccati Difference Equations
\begin{align*}
    V_{t-1}&=(A+BK_t)'V_{t}(A+BK_t)+Q_{t-1}+K'_tRK_t\\
    \bar{V}_{t-1}&=(A+BL_t)'\bar{V}_{t}(A+BL_t)+\bar{Q}_{t-1}+L'_tRL_t,
\end{align*}
with gains
\begin{align*}
    K_t&=-(B'V_{t}B+R)^{-1}B'V_{t}A\\
    L_t&=-(B'\bar{V}_{t}B+R)^{-1}B'\bar{V}_{t}A,
\end{align*}
Then, their difference satisfies the identity:
\[
V_{t-1}-\bar{V}_{t-1}=(A+BL_t)'(V_{t}-\bar{V}_{t})(A+BK_{t})+Q_{t-1}-\bar{Q}_{t-1}
\]
\end{lemma}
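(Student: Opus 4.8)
\textbf{Proof plan for Lemma~\ref{lem:app_RDE}.} The plan is to substitute both Riccati recursions into the difference $V_{t-1}-\bar V_{t-1}$ and reduce the claim to an identity involving only the quadratic-form terms, which then follows from the first-order optimality conditions satisfied by the two gains. Since $Q_{t-1}$ and $\bar Q_{t-1}$ enter additively on both sides of the asserted identity, it suffices to prove
\[
(A+BK_t)'V_t(A+BK_t)+K_t'RK_t-(A+BL_t)'\bar V_t(A+BL_t)-L_t'RL_t=(A+BL_t)'(V_t-\bar V_t)(A+BK_t).
\]
Write $\bar A_K\triangleq A+BK_t$ and $\bar A_L\triangleq A+BL_t$, and record the elementary relations $\bar A_K-\bar A_L=B(K_t-L_t)$ and $\bar A_L-\bar A_K=B(L_t-K_t)$.

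Subtracting the right-hand side from the left-hand side and regrouping with these relations, the claim reduces to showing that
\[
(K_t-L_t)'B'V_t\bar A_K+K_t'RK_t-\bar A_L'\bar V_tB(L_t-K_t)-L_t'RL_t=0.
\]
The two gain identities make this collapse: from $K_t=-(B'V_tB+R)^{-1}B'V_tA$ one gets $(B'V_tB+R)K_t=-B'V_tA$, hence $B'V_t\bar A_K=B'V_tA+B'V_tBK_t=-RK_t$; symmetrically, $L_t=-(B'\bar V_tB+R)^{-1}B'\bar V_tA$ yields $\bar A_L'\bar V_tB=-L_t'R$. Substituting both, the left-hand side becomes $-(K_t-L_t)'RK_t+K_t'RK_t+L_t'R(L_t-K_t)-L_t'RL_t$, which expands to $0$ term by term. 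This establishes the identity, and hence the lemma. (This is precisely the generalization, to possibly different state-weighting sequences $Q_{t-1},\bar Q_{t-1}$ and distinct closed-loop matrices, of the classical comparison identity for Riccati difference equations referenced from \cite{anderson2005optimal}.)

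The whole argument is elementary linear algebra; the only idea is the choice of the asymmetric cross term $\bar A_L'V_t\bar A_K$ — with $\bar A_L$ on the left and $\bar A_K$ on the right, mirroring the target — which is implicitly what one adds and subtracts in the regrouping above, after which the first-order conditions for the two gains force every residual term to cancel. I do not anticipate a genuine obstacle; the only real hazard is careful bookkeeping with transposes, precisely because the identity is deliberately \emph{not} symmetric in $K_t$ and $L_t$, so one must keep track of which factor each gain identity acts on.
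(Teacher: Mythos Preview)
Your proof is correct and is precisely the standard computation the paper has in mind; the paper in fact omits its own proof entirely, merely pointing to Problem~4.5 in \cite{anderson2005optimal}, and your argument is exactly the generalization of that classical identity to distinct weighting sequences $Q_{t-1},\bar Q_{t-1}$. The key steps---adding and subtracting the asymmetric cross term $\bar A_L'V_t\bar A_K$ and then invoking the first-order conditions $B'V_t\bar A_K=-RK_t$ and $\bar A_L'\bar V_tB=-L_t'R$---are the intended ones, and your bookkeeping checks out.
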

\begin{proof}
Omitted; similar to the proof of Problem~4.5 in~\cite{anderson2005optimal}.
\end{proof}

For $N-1\ge k_1>k_2$, define the products:
\begin{equation}\label{eq:psi}
\Psi_{k_1:k_2}(N)\triangleq (A+BK_{k_1})\times \cdots \times (A+BK_{k_2+1}),
\end{equation}
with $\Psi_{k_1:k_1}(N)=I$.
Applying Lemma~\ref{lem:app_RDE} to $V_{t}$ and $V$, we get:
\begin{equation*}
    V_{t}-V=\bar{A}'(V_{t+1}-V)(A+BK_{t+1})+Q_{t}-Q_{\infty}.
\end{equation*}
Repeating multiple times, we obtain a fundamental identity:
\begin{equation}\label{eq:fundamental_difference_RDE}
\begin{aligned}
    V_{t}-V&=(\bar{A}')^{N-t-1}(Q_{N-1}-V)\Psi_{N-1:t}(N)\\
    &+\sum_{k=0}^{N-t-2} (\bar{A}')^{k} (Q_{t+k}-Q_{\infty})\Psi_{t+k:t}(N).
    \end{aligned}
\end{equation}
This almost gives us the result. What remains is to show that $\Psi_{t+k:t}(N)$ are uniformly bounded over all $k,\,N,\,t$. This follows from the following lemma.
\begin{lemma}\label{lem:uniform_boundedness}
Let $Q$ be positive definite. For any $k_1\ge k_2\ge t_0$, $N\ge t_0$ such that $N-1\ge k_1$:
\begin{equation*}
    \snorm{\Psi_{k_1:k_2}(N)}_2 \le \sqrt{\sigma^{-1}_{\min}(Q)\snorm{V}_2}.
\end{equation*}
\end{lemma}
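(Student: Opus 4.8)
The plan is to bound the norm of the product $\Psi_{k_1:k_2}(N)$ via a telescoping (energy/Lyapunov) argument using the Riccati iterates $V_t$ themselves as a time-varying Lyapunov certificate. The key observation is the standard ``closed-loop'' form of the Riccati difference equation: for the recursion $V_{t-1}=(A+BK_t)'V_t(A+BK_t)+Q_{t-1}+K_t'RK_t$ with $K_t=-(B'V_tB+R)^{-1}B'V_tA$, one has the pointwise matrix inequality
\[
(A+BK_t)'V_t(A+BK_t)\preceq V_{t-1}-Q_{t-1}\preceq V_{t-1}-Q,
\]
since $Q_{t-1}=Q+4\mu Q W_{t-1}Q\succeq Q$ and $K_t'RK_t\succeq 0$. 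First I would establish this inequality and then also record the upper bound $V_t\preceq V$; this is where I would use the monotonicity facts from the surrounding text, namely that $W_t$ is increasing in the positive semidefinite order with $W_{t_0}=0$ (so $W_t\preceq W_\infty$, hence $Q_{t}\preceq Q_\infty$), combined with the comparison lemma for Riccati difference equations (monotone dependence of the RDE solution on the state-cost matrix), which gives $V_t\preceq V$ for all $t\le N-1$.

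Next I would iterate the closed-loop inequality along the window $[k_2,k_1]$. Writing $\Phi_j\triangleq\Psi_{k_1:j}(N)$ for $k_2\le j\le k_1$, the inequality $(A+BK_{j})'V_{j}(A+BK_{j})\preceq V_{j-1}-Q$ yields, upon conjugating by $\Phi_j'(\cdot)\Phi_j$ and telescoping from $j=k_1$ down to $j=k_2+1$,
\[
\Phi_{k_2}'V_{k_2}\Phi_{k_2}+\sum_{j=k_2+1}^{k_1}\Psi_{k_1:j}(N)'\,Q\,\Psi_{k_1:j}(N)\preceq V_{k_1}\preceq V.
\]
In particular $\Psi_{k_1:k_2}(N)'\,Q\,\Psi_{k_1:k_2}(N)\preceq V$ (taking the $j=k_2+1$ block together with $\Phi_{k_2}'V_{k_2}\Phi_{k_2}\succeq 0$, or more directly the $j=k_1$... actually the cleanest is: the full product appears as $\Phi_{k_2}$, and $Q\preceq V_{k_2}$ is false in general, so I instead use that $\Psi_{k_1:k_2}$ equals $\Psi_{k_1:k_2+1}\cdot(A+BK_{k_2+1})$ and apply the bound one more step — I will arrange the telescoping so the product itself is sandwiched by a $Q$-weighted term). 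Since $Q\succ 0$, this gives $\sigma_{\min}(Q)\,\snorm{\Psi_{k_1:k_2}(N)}_2^2\le\snorm{V}_2$, i.e. $\snorm{\Psi_{k_1:k_2}(N)}_2\le\sqrt{\sigma_{\min}^{-1}(Q)\snorm{V}_2}$, which is exactly the claimed bound.

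The main obstacle is purely bookkeeping with the index shifts in the telescoping: one must be careful that the product $\Psi_{k_1:k_2}(N)=(A+BK_{k_1})\cdots(A+BK_{k_2+1})$ has its factors indexed so that the closed-loop inequality at step $j$ matches the correct factor $(A+BK_j)$, and that the terminal contribution $V_{k_1}$ is controlled — here is precisely where $V_{k_1}\preceq V$ (from the monotonicity/comparison argument) is indispensable, since $V_{k_1}$ is not bounded by $Q$ alone. The strict positive definiteness $Q\succ 0$ assumed in the lemma is what converts the $Q$-weighted energy bound into a genuine norm bound on the product; for $Q\succeq 0$ only, one would need a detectability-based reachability argument over a window, which is why the paper restricts to $Q\succ 0$ at this stage.
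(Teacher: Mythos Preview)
Your approach is essentially the paper's: use the closed-loop Riccati inequality together with the comparison $V_t\preceq V$ to obtain $\Psi_{k_1:k_2}'\,Q\,\Psi_{k_1:k_2}\preceq V$, then take norms. The two ingredients you single out (the one-step inequality $(A+BK_t)'V_t(A+BK_t)\preceq V_{t-1}-Q$ and the monotone comparison $V_t\preceq V$ via $Q_t\preceq Q_\infty$) are exactly what the paper uses.

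However, your telescoping is set up in the wrong direction, and this is not only bookkeeping. Conjugating $(A+BK_j)'V_j(A+BK_j)\preceq V_{j-1}-Q$ by $\Phi_j=\Psi_{k_1:j}$ gives
\[
\Phi_{j-1}'\,V_j\,\Phi_{j-1}\;\preceq\;\Phi_j'\,V_{j-1}\,\Phi_j-\Phi_j'\,Q\,\Phi_j,
\]
which does \emph{not} telescope: the index on $V$ and the index on $\Phi$ are off by one on the two sides, so summing over $j$ produces no cancellation and your displayed inequality with upper bound $V_{k_1}$ does not follow. The paper instead unrolls the bare inequality $V_{j-1}\succeq(A+BK_j)'V_j(A+BK_j)$ from $j=k_2+1$ up to $k_1$, yielding
\[
V_{k_2}\;\succeq\;\Psi_{k_1:k_2}'\,V_{k_1}\,\Psi_{k_1:k_2}.
\]
Then, contrary to your parenthetical remark, $V_{k_1}\succeq Q_{k_1}\succeq Q$ \emph{does} hold directly from the Riccati recursion (every $V_t$ dominates its stage cost), giving $V_{k_2}\succeq\Psi_{k_1:k_2}'\,Q\,\Psi_{k_1:k_2}$; finally $V_{k_2}\preceq V$ (your comparison step) closes the argument. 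In short, the roles of $V_{k_1}$ and $V_{k_2}$ are swapped relative to what you wrote: $V_{k_1}$ supplies the lower bound by $Q$ at the inner end of the product, while $V_{k_2}$ is the quantity bounded above by $V$ at the outer end.
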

\begin{proof} It is sufficient to show that:
\[
V\succeq \Psi_{k_1:k_2}(N)'Q\Psi_{k_1:k_2}(N).
\]
Then, we can get the result from the bound
\[
\snorm{V}_2\ge \sigma_{\min}(Q) \snorm{\Psi_{k_1:k_2}(N)}^2_2.
\]

The proof will proceed in two steps. First, we show that for any $k,\,N$ such that $k\ge t_0, N-1\ge k$ the stabilizing solution $V$ overbounds $V_k$. Second, we use this property to upper-bound the products $\Psi_{k_1:k_2}(N)$.

\textbf{Step 1.} We show that $V_k\preceq V$ via induction. Recall that the sequence $W_t$ is increasing, which implies that $Q_{t}\preceq Q_{\infty}$, for any $t\ge t_0$.
For the base case, we have $V_{N-1}=Q_{N-1}\preceq Q_{\infty}\preceq V$. Assume that $V_{t} \preceq V$. Define the operator:
\[
g(F)\triangleq A'FA+Q_{\infty}-A'FB(B'FB+R)^{-1}B'FA.
\] Then, since $Q_{t-1}\preceq Q_{\infty}$, we have:
\[
V_{t-1}\preceq g(V_{t})\stackrel{i)}{\preceq} g(V)=V.
\]
The second inequality i) follows from the fact (e.g. see pages 79-80 Ch. 4.4 of~\cite{anderson2005optimal}) that $g(\cdot)$ preserves positive semi-definite order, i.e. if $F_1\succeq F_2$, then $g(F_1)\succeq g(F_2)$. This completes the proof of the first step.

\textbf{Step 2.} We will show that:
\[
V_{k_2}\succeq \Psi_{k_1:k_2}(N)'Q\Psi_{k_1:k_2}(N).
\]
Then, using the result of step 1 proves the desired inequality.
From the definition of the Riccati Difference Equation:
\begin{align*}
V_{k_2}&\succeq (A+BK_{k_2+1})'V_{k_2+1}(A+BK_{k_2+1})\\
&\succeq \cdots \succeq \Psi_{k_1:k_2}(N)'V_{k_1}\Psi_{k_1:k_2}(N)'.
\end{align*}
To complete the proof, note that $V_{k_1}\succeq Q_{k_1}\succeq Q$ for any $k_1\le N-1$.
\end{proof}
Now choose:
\begin{align*}
\C_1&= \snorm{V}^{3/2}_2 \sqrt{\sigma^{-1}_{\min}(Q)}\\
\tilde{\C}_2&= \sqrt{\sigma^{-1}_{\min}(Q)\snorm{V}_2} \sum^{\infty}_{k=0} \snorm{\bar{A}^k}_2
\end{align*}
Based on the above lemma, we obtain:
\begin{align*}
    \snorm{V_t-V}_2 \le \C_1\snorm{\bar{A}^{N-t-1}}_2 +\tilde{\C}_2 \sup_{k\ge t}\snorm{Q_k-Q_{\infty}}_2.
\end{align*}
Since the sequence $W_t$ is increasing, we can replace the supremum by
\[
\sup_{k\ge t}\snorm{Q_k-Q_{\infty}}_2=\snorm{Q_t-Q_{\infty}}_2.
\]
Finally, since:
\[
Q_t-Q_{\infty}=\mu Q(W_t-W_{\infty})Q,
\]
if suffices to select $\C_2=\mu \tilde{\C}_2\snorm{Q}^2$. \hfill $\qed$

Note that if $Q$ is singular, then the result of Lemma~\ref{lem:uniform_boundedness} no longer applies. Instead we could bound the products $V^{1/2}\Psi(k_1:k_2)(N)$. A way to do this is to use $N-k_1$ large enough in the proof of step 2, so that $V_{k_1}$ has the same range space as $V$. To avoid technicalities, we defer the proof for future work. 
\end{document}